\tikzset{
  commutative diagrams/.cd, 
  arrow style=tikz, 
  diagrams={>=cm to}
}
\theoremstyle{definition}
\newenvironment{customthm}[1]
  {\innercustomthm}
  {\endinnercustomthm}
\theoremstyle{definition}
\newenvironment{customcor}[1]
  {\innercustomcor}
  {\endinnercustomcor}
\theoremstyle{definition}
\newenvironment{customconj}[1]
  {\innercustomconj}
  {\endinnercustomconj}
\def\@tocline#1#2#3#4#5#6#7{\relax
  \ifnum #1>\c@tocdepth 
  \else
    \par \addpenalty\@secpenalty\addvspace{#2}%
    \begingroup \hyphenpenalty\@M
    \@ifempty{#4}{%
      \@tempdima\csname r@tocindent\number#1\endcsname\relax
    }{%
      \@tempdima#4\relax
    }%
    \parindent\z@ \leftskip#3\relax \advance\leftskip\@tempdima\relax
    \rightskip\@pnumwidth plus4em \parfillskip-\@pnumwidth
    #5\leavevmode\hskip-\@tempdima
      \ifcase #1
       \or\or \hskip 1em \or \hskip 2em \else \hskip 3em \fi%
      #6\nobreak\relax
    \dotfill\hbox to\@pnumwidth{\@tocpagenum{#7}}\par
    \nobreak
    \endgroup
  \fi}
\DeclareRobustCommand{\cev}[1]{%
  \mathpalette\do@cev{#1}%
}
\newcommand{\do@cev}[2]{%
  \fix@cev{#1}{+}%
  \reflectbox{$\m@th#1\vec{\reflectbox{$\fix@cev{#1}{-}\m@th#1#2\fix@cev{#1}{+}$}}$}%
  \fix@cev{#1}{-}%
}
\newcommand{\fix@cev}[2]{%
  \ifx#1\displaystyle
    \mkern#23mu
  \else
    \ifx#1\textstyle
      \mkern#23mu
    \else
      \ifx#1\scriptstyle
        \mkern#22mu
      \else
        \mkern#22mu
      \fi
    \fi
  \fi
}
\newcounter{marginnote}
\DeclareMathAlphabet{\mathpzc}{OT1}{pzc}{m}{it}
\newcommand{\lu}[1]{{\color{black} #1}}
\theoremstyle{definition}
\newtheorem{theorem}{Theorem}[section]
\newtheorem{corollary}[theorem]{Corollary}
\newtheorem{lemma}[theorem]{Lemma}
\newtheorem{proposition}[theorem]{Proposition}
\newtheorem{remark}[theorem]{Remark}
\newtheorem*{runningexample*}{Running example}
\newtheorem*{aside*}{Aside}
\newtheorem{construction}[theorem]{Construction}
\newtheorem{definition}[theorem]{Definition}
\newtheorem{example}[theorem]{Example}
\newtheorem{proposition-definition}[theorem]{Proposition-Definition}
\newtheorem{question}[theorem]{Question}
\DeclareMathOperator{\ev}{ev}
\DeclareMathOperator{\Hom}{Hom}
\newcommand{\RR}{\mathbb{R}}
\newcommand{\NN}{\mathbb{N}}
\newcommand{\Zcal}{\mathcal{Z}}
\newcommand{\f}{\mathrm{f}}
\newcommand{\p}{\mathrm{p}}
\newcommand{\sqC}{\scalebox{0.8}[1.2]{$\sqsubset$}}
\newcommand{\Gm}{\mathbb{G}_{\operatorname{m}}}
\newcommand{\op}[1]{\operatorname{#1}}
\newcommand{\bcd}{\begin{center}\begin{tikzcd}}
\newcommand{\ecd}{\end{tikzcd}\end{center}}
\newcommand{\e}{e}
\newcommand{\Aaff}{\mathbb{A}}
\newcommand{\PP}{\mathbb{P}}
\newcommand{\OO}{\mathcal{O}}
\newcommand{\N}{\mathbb{N}}
\newcommand{\Z}{\mathbb{Z}}
\newcommand{\ZZ}{\mathbb{Z}}
\newcommand{\virt}{\op{vir}}
\newcommand{\Speck}{\operatorname{Spec}\kfield}
\newcommand{\kfield}{\Bbbk}
\newcommand{\Orb}{\mathsf{Orb}}
\newcommand{\LogOrb}{\mathsf{LogOrb}}
\newcommand{\Lcal}{\mathcal{L}}
\newcommand{\Ocal}{\mathcal{O}}
\newcommand{\Acal}{\mathcal{A}}
\newcommand{\Dcal}{\mathcal{D}}
\newcommand{\Xcal}{\mathcal{X}}
\newcommand{\Bcal}{\mathcal{B}}
\newcommand{\Ecal}{\mathcal{E}}
\newcommand{\Ccal}{\mathcal{C}}
\newcommand{\Mfrak}{\mathfrak{M}}
\newcommand{\Pcal}{\mathcal{P}}
\newcommand{\Rder}{\mathbf{R}^\bullet}
\newcommand{\Nup}{\mathsf{N}}
\newcommand{\Mup}{\mathsf{M}}
\newcommand{\Log}{\mathsf{Log}}
\newcommand{\vir}{\text{\rm vir}}
\newcommand{\Trop}{\op{Trop}}
\newcommand{\edit}[1]{\color{black} {#1} \color{black}}
\newcommand{\red}[1]{\color{black} {#1} \color{black}}
\newcommand{\blue}[1]{\color{blue} {#1} \color{black}}
\NewDocumentCommand{\compatibilitydatum}{m m m m m m O{} O{} O{}}{
\begin{equation*} \begin{tikzcd}[ampersand replacement=\&]
  \: \arrow{r} \& {#1} \arrow{r} \arrow{d}{#7} \& {#2} \arrow{r} \arrow{d}{#8} \& {#3} \arrow{r}{[1]} \arrow{d}{#9} \& \: \\
  \: \arrow{r} \& {#4} \arrow{r} \& {#5} \arrow{r} \& {#6} \arrow{r} \& \:
\end{tikzcd} \end{equation*}}
\NewDocumentCommand{\commutingsquare}{m m m m o O{} O{} O{} O{}}{
\begin{equation}\begin{tikzcd}[ampersand replacement=\&] \label{#5}
  #1 \arrow{r}{#6} \arrow{d}{#7} \& #2 \arrow{d}{#8} \\
  #3 \arrow{r}{#9} \& #4
\end{tikzcd}\IfValueTF{#5}{\label{#5}}{} \end{equation}}
\NewDocumentCommand{\Cartesiansquare}{m m m m O{} O{} O{} O{}}{
\begin{equation*}\begin{tikzcd}[ampersand replacement=\&]
  #1 \arrow{r}{#5} \arrow{d}{#6} \arrow[dr, phantom, "\square"] \& #2 \arrow{d}{#7} \\
  #3 \arrow{r}{#8} \& #4
\end{tikzcd} \end{equation*}}
\NewDocumentCommand{\Cartesiansquarelabel}{m m m m m O{} O{} O{} O{}}{
\begin{tikzcd}[ampersand replacement=\&]
  #1 \arrow{r}{#6} \arrow{d}{#7} \arrow[dr, phantom, "\square"] \& #2 \arrow{d}{#8} \\
  #3 \arrow{r}{#9} \& #4
\end{tikzcd}\IfValueTF{#5}{\label{#5}}{}
}
\NewDocumentCommand{\triangleofspaces}{m m m O{} O{} O{}}{
\begin{tikzcd} [ampersand replacement=\&]
#1 \arrow{r}{#4} \arrow[bend right]{rr}{#5} \& #2 \arrow{r}{#6} \& #3
\end{tikzcd}}
\begin{document}
 
\title{Gromov--Witten theory via roots and logarithms}
\author{Luca Battistella, Navid Nabijou and Dhruv Ranganathan}

\begin{abstract} 
Orbifold and logarithmic structures provide independent routes to the virtual enumeration of curves with tangency orders for a simple normal crossings pair $(X|D)$. The theories do not coincide and their relationship has remained mysterious. We prove that the genus zero orbifold theories of multi-root stacks of strata blowups of $(X|D)$ converge to the corresponding logarithmic theory of $(X|D)$. With fixed numerical data, there is an explicit combinatorial criterion that guarantees when a blowup is sufficiently refined for the theories to coincide. The result identifies birational invariance as the crucial property distinguishing the logarithmic and orbifold theories. There are two key ideas in the proof. The first is the construction of a naive Gromov--Witten theory, which serves as an intermediary between roots and logarithms. The second is a smoothing theorem for tropical stable maps; the geometric theorem then follows via virtual intersection theory relative to the universal target. The results import a new set of computational tools into logarithmic Gromov--Witten theory. As an application, we show that the genus zero logarithmic Gromov--Witten theory of a pair is determined by the absolute Gromov--Witten theories of its strata.
\end{abstract}
\maketitle
\setcounter{tocdepth}{1}
\tableofcontents

\section*{Introduction}

\subsection{Overview} Gromov--Witten theory for a pair $(X|D)$ of a projective manifold and a simple normal crossings divisor models the enumerative geometry of curves in $X$ with prescribed tangency along $D$. Two approaches to this problem of significant recent interest proceed via logarithmic structures and via root constructions along the divisor: see~\cite{AbramovichChenLog,ChenLog,GrossSiebertLog} and ~\cite{AbramovichVistoli,Cadman,CadmanChen,AbramovichCadmanWise,TsengYouSNC}. In both, the idealised object is a map of pairs from a smooth curve
\[
(C| p_1+\ldots+p_n)\to (X|D=D_1+\ldots+D_k)
\]
where the tangency of $p_i$ along $D_j$ is a fixed non-negative integer $c_{ij}$. The approaches above yield two paths to a proper moduli space with a virtual fundamental class. In both approaches, the degenerate objects in the moduli space parameterize stable maps with additional structure. 

In the orbifold approach, the map is required to lift to a representable map of stacks, obtained from root constructions along the divisors $D_j$ and the markings $p_i$. The tangency is recorded by the morphisms on isotropy groups, but the orbifold structure along $D$ is not canonical and provides an auxiliary choice -- the rooting parameter(s).

In the logarithmic approach, one demands that the map can be enhanced to a logarithmic one, with tangency orders encoded in the logarithmic structure sheaves. 

The two theories are very different in nature, provide complementary computational tools, and offer different insights into enumerative geometry.
The present paper compares them.
There are two orthogonal sources of complexity:

\begin{enumerate}[(i)]
\item {\bf Source complexity:} the genus of the source curve, where the problem breaks naturally into the cases of genus zero and positive genus;
\item {\bf Target complexity:} the logarithmic rank of the target, where the problem breaks naturally into the cases of logarithmic rank one, where the divisor is smooth, and arbitrary rank.
\end{enumerate}

\noindent
When the logarithmic rank is one there is a complete picture. Once the rooting parameter $r$ is sufficiently high, the theories coincide in genus zero by work of Abramovich, Cadman, Marcus, and Wise, following calculations of Cadman and Chen ~\cite{AbramovichMarcusWise,AbramovichCadmanWise,CadmanChen}. In higher genus, Maulik showed that the orbifold theory depends on the rooting parameter $r$. Work of Janda, Pandharipande, Pixton, and Zvonkine asserts that the contribution to the Gromov--Witten theory that is local to the divisor $D$ is polynomial in $r$, and the $r$-free term is the logarithmic theory~\cite{JPPZ,JPPZ2}. Finally, Tseng and You used the compatibility of these statements with degeneration to deduce that the $r$-free term of the orbifold theory of $(X|D)$ is the logarithmic theory~\cite{TsengYouHigherGenus}.

The present article studies the complementary situation of the genus zero theory in arbitrary logarithmic rank. Examples due to the second and third authors show that even in genus zero the theories do not coincide \cite[Section~1]{MaxContacts}. We offer an explanation: the logarithmic theory is invariant under logarithmic modifications of the boundary divisor \cite{AbramovichWiseBirational}, whereas the orbifold theory is not. We show that the invariant part of the latter is the logarithmic theory. 

\subsection{Main results} Let $(X|D)$ be a simple normal crossings pair with components $D_1,\ldots, D_k$. We fix numerical data $\Lambda$ for the enumerative problem, which is a tuple $(g,\upbeta,n,[c_{ij}])$, 
where $g$ is the genus of the source curve, $\upbeta$ is the curve class, $n$ is the number of marked points, and $c_{ij}$ is the tangency order at the $i$-th marking with respect to $D_j$. The collection of iterated blowups 
\[
(X^\dag|D^\dag)\to (X|D)
\]
along the strata of $D$ forms an inverse system. In Section~\ref{sec: slope subdivision} we introduce a property of such blowups called \textbf{$\Lambda$-sensitivity}, and observe that blowups with this property always exist. Moreover, this condition is stable under taking further blowups. 

For the first result, we fix the genus to be zero. 
Given any iterated blowup $(X^\dag|D^\dag)\to (X|D)$ along logarithmic strata, there is a canonical lifting of the numerical data $\Lambda$ to $(X^\dag|D^\dag)$, obtained by formally computing the strict transform of a stable map: see Section~\ref{sec: naive spaces under blowup}.

\begin{customthm}{X}\label{thm: main-thm}
 Let $(X^\dag|D^\dag)\to (X|D)$ be a $\Lambda$-sensitive blowup. The \lu{genus zero} logarithmic Gromov--Witten invariants of $(X|D)$ with numerical data $\Lambda$ agree with the orbifold Gromov--Witten invariants of $(X^\dag|D^\dag)$ with the same numerical data.
\end{customthm}
The result is proved on the level of virtual cycles: see Section~\ref{sec: introduction proof idea} for details. In particular, the result implies that the multi-root orbifold theories of strata blowups eventually stabilise, which appears to be a non-obvious consequence. 

Combining this with results on Gromov--Witten theory of root stacks \cite{TsengYouCodimOne,ChenDuWang}, gerbes \cite{AJTGerbes1}, projective bundles and blowups~\cite{FanBlowups} we obtain the following \emph{reconstruction principle for logarithmic Gromov--Witten invariants}. 

\begin{customcor}{Y}\label{cor: reconstruction}
The genus zero logarithmic Gromov--Witten theory of $(X|D)$ can be uniquely
reconstructed from the absolute Gromov--Witten theories of all the logarithmic strata of $(X|D)$.
\end{customcor}

In genus zero, this result generalises a well-known theorem from relative Gromov--Witten theory due to Maulik and Pandharipande~\cite[Theorem~2]{MaulikPandharipande}. The authors are not aware of a purely logarithmic argument that yields this result. Although it may be possible to use the degeneration formula to deduce it~\cite{RangExpansions}, the result illustrates the type of consequences that can be extracted from our theorem. 

\subsection{Naive theory} \label{sec: introduction proof idea} The route to Theorem~\ref{thm: main-thm} passes through a new version of logarithmic Gromov--Witten theory, referred to as the naive theory. This appeared in an early form in~\cite[Chapter~3]{NabijouThesis} and \cite{MaxContacts}. \edit{In this paper, we construct the theory in full generality: in arbitrary genus and without positivity assumptions on the divisor.}

Let $\Log_\Lambda(X|D)$ denote the moduli space of logarithmic stable maps to a pair $(X|D)$, and $\mathsf M_\Lambda(X)$ the space of absolute stable maps; the symbol $\Lambda$ records the numerical data. Given $(X|D)$, each divisor component $D_i$ determines a smooth pair $(X|D_i)$, and a logarithmic stable map to $(X|D)$ induces one to each $(X|D_i)$. The geometry of the interior of the space $\Log_\Lambda(X|D)$ suggests an alternative compactification that we call the moduli space of \textbf{naive stable maps}
\[
\mathsf N_\Lambda(X|D) \colonequals \Log_\Lambda(X|D_1)\times_{\mathsf M_\Lambda(X)} \cdots \times_{\mathsf M_\Lambda(X)} \Log_\Lambda(X|D_k).
\]
The fibre product is taken in the category of algebraic stacks, disregarding the logarithmic structures. \red{The following theorem holds for arbitrary genus.} 

\begin{customthm}{Z}\label{thmZ}
The moduli space $\Nup_\Lambda(X|D)$ of naive stable maps is equipped with natural evaluation morphisms to the strata of $(X|D)$, and a virtual fundamental class \red{whose virtual dimension coincides with the virtual dimension of the corresponding space of logarithmic stable maps.}

In genus zero the resulting system of naive Gromov--Witten invariants agrees with the orbifold theory, as follows. There is a diagram of moduli spaces equipped with virtual fundamental classes
\[
\begin{tikzcd}
\Orb_\Lambda(X|D) \ar[dr] & & \mathsf{NLogOrb}_\Lambda(X|D) \ar[dl] \ar[dr] & \\
& \mathsf{NOrb}_\Lambda(X|D) &  &  \Nup_\Lambda(X|D).
\end{tikzcd}
\]
with each arrow identifying virtual fundamental classes via proper pushforward.
\end{customthm}

The spaces appearing in the diagram above will be defined in Section~\ref{sec:rootsandnaive} below. In simple terms, there is no direct morphism from $\Orb_\Lambda(X|D)$ to $\Nup_\Lambda(X|D)$, but a map exists after introducing appropriate root constructions: see for instance~\cite{AbramovichCadmanWise,AbramovichMarcusWise}. The intermediate spaces give an explicit moduli theoretic factorisation of these root constructions. Product formulas in orbifold Gromov--Witten theory play an important role in Theorem~\ref{thmZ}: see~\cite{AJTProducts,BNTY}. The fact that logarithmic Gromov--Witten theory does not satisfy product-type formulas can be seen as an instance of the phenomena that we encounter in this paper: see~\cite{DhruvProduct,MaxContacts}. 

Theorem~\ref{thm: main-thm} can now be phrased as stating that the natural map
\[\Log_\Lambda(X^\dag|D^\dag)\to \Nup_\Lambda(X^\dag|D^\dag)\]
is virtually birational for any $\Lambda$-sensitive blowup $(X^\dag|D^\dag)\to(X|D)$. In Section~\ref{sec: proof} this is proved by reducing to a tropical smoothing problem, which we solve by exploiting the combinatorics of $\Lambda$-sensitive blowups. The solution identifies the main component of the naive moduli space.

\subsection{Context} The enumerative geometry of orbifolds obtained by roots along a simple normal crossings divisor was considered in Cadman's foundational paper~\cite[Section~2]{Cadman}, and its systematic study was initiated by Tseng and You~\cite{TsengYouSNC}. They observed that the theory satisfies a number of remarkable properties, owing to the structural similarities between ordinary and orbifold Gromov--Witten theory. They ask about the relationship with logarithmic Gromov--Witten theory, and the main theorem answers this question in genus zero.

In recent years, a number of results have appeared in the logarithmic geometry literature asserting that natural geometric claims are true \textit{after sufficiently blowing up}, see for instance~\cite{HPS,MaxContacts,RangExpansions,DhruvProduct}. However our results \textit{do not} fit into this general principle. In the present work, the moduli space of stable maps is never directly blown up, but rather only the target is, leading to a much stronger conclusion. In the cited works, once the moduli spaces of curves or of maps is blown up, the canon of intersection theory methods, including localisation, do not apply. These methods remain accessible for us, leading to conclusions such as Corollary~\ref{cor: reconstruction}. In short, we only blowup the target, while earlier results blowup the moduli space. 

The comparison between logarithmic and orbifold Gromov--Witten theory should also be contrasted with those relating different models \textit{within} logarithmic Gromov--Witten theory, including~\cite{AbramovichMarcusWise,AbramovichWiseBirational,RangExpansions}. While the latter statements are essentially uniform, the comparison we obtain is more subtle and only true after appropriate invariance properties are forced. 

The main result has consequences for the types of computational techniques which are available. Computations in logarithmic enumerative geometry have been achieved via tropical correspondences~\cite{MandelRuddat,NishinouSiebert}, degeneration formulas~\cite{AbramovichChenGrossSiebertDegeneration,PuncturedMaps,RangExpansions,YixianWu}, and scattering diagrams~\cite{ArguzGross,GrossPandharipandeSiebert}. These methods rely on tropical geometry in an essential fashion. 

The orbifold theory is equipped with a different set of computational techniques, including torus localisation, Givental formalism, and universal relations such as WDVV and Virasoro constraints: see~\cite{TsengYouSNC}. None of these are available in logarithmic geometry. The difference in computational techniques illustrates the different nature of the two sides, and is a notable feature of Theorem~\ref{thm: main-thm}. As a coarse consequence, the main result, together with virtual localisation, determines the genus zero logarithmic Gromov--Witten theory of a pair $(X|D)$ for $X$ toric and $D$ a subset of the toric boundary in terms of known calculations~\cite[Section~9]{MLiuLocalisation}. In a different way, once the problem has been moved out of logarithmic geometry, quasimap wall-crossing and enumerative mirror symmetry techniques become available~\cite{BN21,CCFK}. It seems plausible that mirror transformations may help to further clarify the relationship between the logarithmic and orbifold theories, see for instance~\cite{Sha23,You22}.


The process of taking the $r$-free term of the higher genus orbifold theory can also be seen as a way of forcing an invariance property in the orbifold theory~\cite{JPPZ,JPPZ2,TsengYouSNC}. Together with the main result here, it suggests that invariance under roots and blowups are precisely what distinguishes the orbifold theory from the logarithmic theory. 

\subsection{Future work} Two directions for theoretical generalisation are available. The first is the higher genus comparison problem. As discussed earlier, if the logarithmic rank is one, the invariants are known to become polynomial in the rooting parameter, with the $r$-free term being the logarithmic theory. The polynomiality result is known to hold for simple normal crossings divisors \cite{TsengYouSNC}: the invariants are polynomial in variables $r_1,\ldots, r_k$ corresponding to the rooting parameters for divisors $D_1,\ldots, D_k$. It is natural to conjecture:
\begin{customconj}{W}\label{conjecture higher genus} For each choice of numerical data $\Lambda$ in arbitrary genus, there is a sufficiently refined blowup $(X^\dag|D^\dag) \to (X|D)$ such that the constant-term orbifold Gromov--Witten invariants of $(X^\dag|D^\dag)$ agree with the logarithmic Gromov--Witten invariants of $(X|D)$.	
\end{customconj}
\noindent The result appears plausible, but it seems likely that new input coming from higher double ramification cycles will be necessary, going beyond~\cite{HPS,HolmesSchwarz,MolchoRanganathan}. 

The second direction is to incorporate negative tangency. In the logarithmic theory this is the subject of~\cite{PuncturedMaps}, while in the orbifold theory, it can be extracted by studying invariants of fixed co-age~\cite{FWY20,FWY21}. The latter papers investigate the comparison between logarithmic and orbifold geometries in logarithmic rank one with negative contact orders and establish a comparison, albeit using a different model for the logarithmic negative contact theory via localisation and graph summation on the normal cone of $D$. Negative contact orders are the key input in the construction of intrinsic mirrors, via degree zero relative quantum cohomology for pairs~\cite{GS19}. The associativity of this ring, which is the main result of loc. cit., is shown via a remarkable calculation. Our results offer the possibility of deducing the associativity from the simpler associativity of orbifold quantum cohomology, which also offers a route to a lograrithmic quantum cohomology for pairs~\cite{TsengYouSNC}.

\subsection{Strategy} A simplified overview is as follows. Given a pair $(X|D)$, each smooth pair $(X|D_i)$ determines a cycle in the space $\Mup(X)$ of absolute maps. The intersection of these cycles is the image of the naive moduli space of maps in $\Mup(X)$. We equip it with a virtual class that is compatible with its presentation as an intersection. The orbifold geometry attached to $(X|D)$ is the fibre product of those attached to $(X|D_i)$, over $X$. The Gromov--Witten cycles for the orbifold theory and, by definition, the naive theory both satisfy a product rule in $\Mup(X)$. Once they are equated, we dispense with the orbifold theory. In the main text, this is refined to work on the naive spaces themselves rather than their images in $\Mup(X)$. 

A map to $X$ lifts to a logarithmic map to $(X|D_i)$ if and only if it satisfies an intersection-theoretic criterion due to Gathmann \cite{GathmannRelative}. However the existence of lifts to all $(X|D_i)$ does not guarantee a lift to $(X|D)$. The obstruction is tropical: a logarithmic map determines a combinatorial type of a tropical map, and when $X$ is replaced with the universal simple normal crossings pair $[\mathbb A^k/\Gm^k]$, the tropical data essentially determines the logarithmic data. However, these tropical data are not compatible with the natural product decomposition of $[\mathbb A^k/\Gm^k]$. Concretely, assembling two rank one tropical types to a single type of rank two is a subtle problem.

However, explicit polyhedral geometry arguments show that the obstructions vanish after a sufficiently fine subdivision of the tropical target. This yields a universal version of the main theorem. Combined with techniques from virtual intersection theory, we deduce the main result. 

\subsection*{Conventions and notation} We work over an algebraically closed field of characteristic zero. We assume familiarity with toroidal, logarithmic and tropical geometry, and the role they play in logarithmic Gromov--Witten theory. An overview is provided in Section~\ref{sec: tropical essentials}.

We will work with various flavours of stable maps throughout the paper. To reduce clutter, we adopt the simplified notation
\[ \Mup_\Lambda(X), \Log_\Lambda(X|D), \Orb_\Lambda(X|D) \]
for the absolute, logarithmic, and orbifold moduli spaces. A number of auxiliary moduli spaces will be introduced at various points and denoted similarly, as will the corresponding spaces of prestable maps to the universal targets.

\subsection*{Acknowledgements} The question addressed here was the subject of a lively discussion with D. Abramovich and R.~Pandharipande in Oberwolfach in 2019, at a workshop organised by D. Abramovich, M. van Garrel, and H.~Ruddat. The question resurfaced in a discussion with M.~Gross and B. Siebert in 2020 following the limit orbifold quantum ring constructed by H.-H. Tseng and F. You~\cite{TsengYouSNC}. We are very grateful to all of them for sharing speculations and objections. 

We have also benefitted from conversations with F. Carocci, T. Graber, A. Kumaran, D. Maulik, S. Molcho, Q. Shafi, I. Smith, M. Talpo, and J. Wise on related topics over the years. We especially thank D.~Abramovich, R.~Pandharipande, H.-H. Tseng,  J. Wise, F. You, and an anonymous referee for comments on a draft of this manuscript. The work as a whole owes an intellectual debt to the line of inquiry opened by C. Cadman and L. Chen~\cite{CadmanChen}. 

\subsection*{Funding} L.B. was supported by the Deutsche Forschungsgemeinschaft (DFG, German Research Foundation) under Germany's Excellence Strategy EXC 2181/1-390900948 (the Heidelberg STRUCTURES Excellence Cluster) and Project-ID 444845124 – TRR 326 (GAUS). N.N. is supported by the Herchel~Smith~Fund. D.R. is supported by EPSRC New Investigator Grant EP/V051830/1. 

\section{Tropical essentials}\label{sec: tropical essentials}

\noindent We provide a review of the crucial aspects of logarithmic and tropical geometry that we require. The treatment strives for brevity, but we provide detailed references.

\subsection{Cone complexes and Artin fans}\label{sec:ConeComplexes} Let $(X|D)$ be a simple normal crossings pair with components $D_1,\ldots, D_k$. \red{A stratum of $(X|D)$ is a connected component of an intersection of some of the $D_j$ (this includes the empty intersection $X$). All strata are smooth. We momentarily make the simplifying assumption that every intersection of $D_j$ is connected.}

Let $\mathcal A$ denote the stack $[\mathbb A^1/\Gm]$. A morphism from $X$ to $\mathcal A$ is precisely the data of a line bundle-section pair, referred to here as a \textbf{generalised Cartier divisor}. Each $D_j$ furnishes a morphism $X\to \mathcal A$. Combining these, the divisor $D$ induces a smooth morphism
\[
X\to \mathcal A^k.
\]
Let $\mathcal A_X$ be the minimal open substack of $\mathcal A^k$ through which the map from $X$ factors. It has one point for each stratum. Let $\mathcal D_X$ be the complement of the dense open point. Consider the sequence of maps
\[
X\to \mathcal A_X\hookrightarrow \mathcal A^k.
\]
Each space is a logarithmic algebraic stack, with logarithmic structure induced by the natural simple normal crossings divisor. Each arrow is strict. The stack $\mathcal A_X$ is called the \textbf{Artin fan} of $X$. See~\cite[Section~2]{AbramovichWiseBirational} for the original construction and~\cite[Section~5]{AbramovichEtAlSkeletons} for further exposition. 

The datum $(X|D)$ determines a cone complex with integral structure, also called the \textbf{tropicalisation of $(X|D)$}. It is denoted $\Sigma_X$, with $D$ suppressed from notation, and is constructed as follows. It has $k$ distinct rays $\rho_1,\ldots, \rho_k$. For each nonempty subset $I\subset \{1,\ldots, k\}$ we write $D_I$ for the intersection of the corresponding $D_i$. If nonempty, the intersection is closed of codimension equal to the size of $I$. For each nonempty $D_I$ the complex $\Sigma_X$ contains a copy of the cone $\sigma_I$, generated by the rays $\rho_i$ for $i$ in $I$. There is a natural identification of $\sigma_I$ as a face of $\sigma_J$ whenever $J$ contains $I$. Define
\[
\Trop(X|D):=\Sigma_X := \varinjlim_{I} \sigma_I
\]
as the colimit under these identifications. It comes with a canonical set of integral points, denoted $\Sigma_X(\N)$. The construction first appeared in a more general form in~\cite[Chapter~II]{KKMS}. An exposition may be found in~\cite[Section~2]{AbramovichEtAlSkeletons} \red{or \cite[Section~2]{CavalieriChanUlirschWise}}. 

\begin{remark}
The partially ordered set of faces of $\Sigma_X$ can be equipped with its Alexandrov topology, and this coincides with the underlying topological space of $\mathcal A_X$.  
\end{remark}

\begin{remark}
If the intersections $D_I$ are not connected, both the Artin fan and cone complex constructions generalise in a natural way. Following the construction above, the map from $X$ to $\mathcal A^k$ has disconnected fibres. By replacing the $\mathcal A_X$ above with a non-separated cover we can guarantee connectedness of the fibres: see~\cite{AbramovichWiseBirational}. The cone complex construction can also be generalised by taking one cone for each connected component in the intersection. 
\end{remark}

\subsection{Tropical curves and tropical maps} \label{sec: tropical curves} The tropical curves in this paper will have genus zero. The \textbf{dual graph} $\Gamma$ of a stable genus zero curve $C$ has vertices in bijection with the irreducible components of $C$ and the set of edges connecting two vertices in bijection with the set of nodes connecting the corresponding components. If $C$ is marked by smooth points $p_1,\ldots, p_n$. This determines a map to the vertex set
\[
m: \{1,\ldots,n\}\to V(\Gamma). 
\]
A \textbf{tropical curve} is an enhancement of $\Gamma$ obtained by assigning a positive real length to each edge
\[
\ell: E(\Gamma)\to\mathbb R_{>0}. 
\]
The triple $(\Gamma,m,\ell)$ determines a metric graph enhancing $\Gamma$ in two steps. First, identify an edge $e$ of $\Gamma$ with an interval of length $\ell(e)$. Second, for each marking $i$, attach a copy of $\RR_{\geq 0}$ to this metric graph by identifying $0 \in \RR_{\geq 0}$ with the vertex $m(i)$. 

The notion of a \textbf{family of tropical curves} is defined as follows. Fix a sharp toric monoid $Q$ and let $\tau$ be the dual cone. Given a marked dual graph, a family of tropical curves over $\tau$ is determined by a map
\begin{equation} \label{eqn: length assignment} \ell: E(\Gamma)\to Q\setminus \{0\}.\end{equation}
Each point of $\tau$ is a homomorphism from $Q$ to $\mathbb R_{\geq 0}$, and by applying this homomorphism to the edge assignment above, each edge $e$ acquires a length. On the interior of $\tau$, this is a tropical curve in the previous sense. Fixing the data \eqref{eqn: length assignment}, there is a cone complex $\sqC$ together with a map
\[
\sqC\to \tau
\]
whose fibre over a point in the interior is a tropical curve as previously discussed. The boundary parameterises tropical curves where a subset of edges have been contracted. The total space $\sqC$ is a cone complex of relative dimension $1$ over $\tau$ and its cones correspond to the faces of the underlying dual graph $\Gamma$. We typically work with this total space $\sqC$, with the labeling of the graph $\Gamma$ implicit. See~\cite[Section~3]{CavalieriChanUlirschWise} for details and generalisations. 

Let $\sqC\to \tau$ be a family of tropical curves and let $\Sigma$ be a cone complex. A \textbf{tropical map from $\sqC$ to $\Sigma$} is a map of cone complexes
\[
\sqC\to \Sigma.
\]
The fibres over $\tau$ can be viewed as tropical curves equipped with a piecewise linear map to $\Sigma$. When $\Sigma= \RR_{\geq 0}$ we will refer to this as a \textbf{piecewise linear function} on $\sqC$. 

\begin{remark}[Slice picture]
In order to speak of a single tropical curve, as opposed to a family, formally one takes the monoid $Q$ above to be $\mathbb N$. In reality, this produces a family of tropical curves $\sqC\to \tau=\mathbb R_{\geq 0}$. In combinatorial arguments it is traditional to slice this map of cone complexes and take the fibre over the point $1$ \lu{in $\tau$}. The family can be uniquely reconstructed from this slice by taking the cone over the tropical curve.
\end{remark}

\subsection{Tropicalisation \lu{of curves}}\label{sec:Tropicalisation} Let $(\Speck,Q)$ denote a closed point equipped with the logarithmic structure associated to a toric monoid $Q$. Let $\tau$ be the dual cone of $Q$. Given a logarithmic curve
\[
\Ccal\to (\Speck,Q)
\]
there is an associated tropical curve $\sqC\to\tau$. Let $\Gamma$ be the dual graph of $\Ccal$. The \'etale local structure of logarithmic curves was determined by Kato~\cite{KatoFCurves}, and associates an element of $Q$ to each edge of $\Gamma$: see~\cite[Section~7.2]{CavalieriChanUlirschWise} for a tropical perspective. The \textbf{tropicalisation of $\Ccal\to (\Speck,Q)$} is the associated family of tropical curves $\sqC\to \tau$. We denote the tropicalisation by $\Trop(\Ccal)$ or $\sqC$.

\begin{remark}
The construction globalises to relate the moduli space of logarithmic curves and the moduli space of tropical curves: see~\cite{CavalieriChanUlirschWise}. We will not require this generality. 
\end{remark}

\subsection{Line bundles and blowups}\label{sec: line-bundles} Two combinatorial dictionaries will be used in the paper. First, if $\Ccal$ is a logarithmic curve over a geometric point $(\Speck,Q)$, then a piecewise linear function on the tropicalisation $\sqC$ determines a generalised Cartier divisor on $\Ccal$. The construction generalises the toric dictionary between toric Cartier divisors and piecewise linear functions on the fan. 

If $\upalpha:\sqC\to \RR_{\geq 0}$ is a piecewise linear function, it can be identified with an element of $H^0(\Ccal,\overline M_{\Ccal}^{\mathrm{gp}})$. The line bundle $\mathcal O_{\Ccal}(\upalpha)$ is the image of $-\upalpha$ in $H^1(\Ccal,\mathcal O_{\Ccal}^\star)$ under the coboundary map induced by
\[
0\to\Ocal_{\Ccal}^\star\to M_{\Ccal}^{\mathrm{gp}}\to \overline M_{\Ccal}^{\mathrm{gp}}\to 0.
\]
The logarithmic structure map $M_{\Ccal}\to\Ocal_{\Ccal}$ furnishes a section of this bundle: see e.g.~\cite[Section~2]{RanganathanSantosParkerWise1}. The resulting generalised Cartier divisor is denoted $(\OO_{\Ccal}(\upalpha),s_\upalpha)$. A piecewise linear function on $\Sigma_X$ similarly determines a divisor on $X$ supported along $D$. 

The second construction is related to birational modifications. A morphism of cone complexes $\Sigma'\to \Sigma_X$ \red{(see Section~\ref{sec:ConeComplexes} and \cite[Chapter II]{KKMS})} is a \textbf{subdivision} if it is a bijection on the underlying sets. The most basic type of subdivision is the {stellar subdivision} of a simplicial cone. Given a simplicial cone $\sigma$ with primitive generators $u_1,\ldots, u_k$ the \textbf{stellar subdivision at $\sigma$} is the unique cone complex that refines $\Sigma_X$ and has precisely one more ray, namely the one spanned by $\sum_{i=1}^k u_i$. A subdivision $\Sigma'\to \Sigma_X$ induces a birational modification $X'\to X$ which  modifies $X$ along the boundary $D$. The stellar subdivision at a cone $\sigma$ is the blowup of $X$ at the smooth stratum dual to $\sigma$. Further combinatorial details can be found in the text on toric geometry~\cite[Section~3.3]{CoxToric}. The generalisation to the context of simple normal crossings pairs is detailed in~\cite{KKMS}. 

A more general version will also be used. If $(\Speck,Q)$ is a geometric point, then a subdivision of $\tau$ gives rise to a logarithmic modification of $\Speck$. We can identify this logarithmic point with a point in the closed orbit of the toric variety with coordinate ring $\kfield[Q]$, equipped with the pullback logarithmic structure. The subdivision of $\tau$ birationally modifies this toric variety. The corresponding modification of $\Speck$ is obtained by pullback along the inclusion. See~\cite[Section~9]{KatoToricSingularities} for a detailed study.

\subsection{Moduli of curves with fixed tropicalisation} \label{sec: moduli of log curves} Let $C$ be genus zero prestable curve over $\Speck$. Choose an identification of the dual graph of $C$ with a labeled graph $\Gamma$ and let $\sigma_\Gamma$ denote the smooth cone $\Hom(\mathbb N^{E(\Gamma)},\RR_{\geq 0})$. \red{Points of $\sigma_\Gamma$ correspond to choices of edge lengths, i.e. metric enhancements of $\Gamma$. This gives rise to a universal family of tropical curves $\sqC_\Gamma \to \sigma_\Gamma$.} Choose a rational ray 
\[
\rho\subset \sigma_\Gamma
\] 
whose interior is contained in the interior of $\sigma_\Gamma$ and let $v$ be the primitive integral generator of $\rho$. We describe the set of all logarithmic curves $\Ccal$ over the standard logarithmic point $(\Speck,\mathbb N)$ such that (i) the underlying curve is $C$, (ii) the dual graph is identified with $\Gamma$, and (iii) the tropicalisation is given by the restriction of $\sqC_\rho\to\rho$ of the universal tropical family to $\rho$.

The moduli space of logarithmic curves with such fixed tropicalisation is described by~\cite[Theorem~4]{CavalieriChanUlirschWise} and the formalism surrounding it. As we have chosen an identification of the dual graph with a fixed copy of $\Gamma$ and only require the case where the underlying base is a geometric point,  the stack theoretic subtleties in loc. cit. do not arise, and the formalism simplifies. 

This moduli space is an algebraic torus, which can be identified with the parameter space of logarithmic morphisms
\[(\Speck,\N)\to(\Speck,\N^{E(\Gamma)})\]
whose tropicalisation is $\rho\hookrightarrow \sigma_\Gamma$. Indeed, a logarithmic lift is given by specifying the image of every generator
\[ (1,e_i)\in\kfield^{\times}\oplus\N^{E(\Gamma)}\]
in $\kfield^{\times}\oplus\N$. The discrete part is determined by the dual of $\rho \hookrightarrow \sigma_\Gamma$ but the continuous part is free.

A geometric view is provided below. A closely related concept in tropical geometry is the initial degeneration: see \cite[Chapters 2-3]{MaclaganSturmfels}. 

\begin{construction}
Given $\rho\subset \sigma_\Gamma$ choose a subdivision of $\sigma_\Gamma$ that includes $\rho$ as a face. Let $\widetilde U$ be the associated logarithmic modification of $(\Speck,\NN^{E(\Gamma)})$. Define
\[ \mathsf M(\rho) \subseteq \widetilde{U} \]
to be the locally closed stratum of codimension $1$ which is dual to $\rho$. 
\end{construction}

Each point of $\mathsf M(\rho)$ gives rise to a logarithmic curve $\Ccal$ over a logarithmic point $(\Speck,\mathbb N)$ enhancing $C$, whose tropicalisation is $\sqC_\rho \to \rho$, and which is compatible with the fixed identification of the dual graph with $\Gamma$. In fact, $\mathsf M(\rho)$ is precisely the moduli space of such data. 

More generally, if we choose any rational cone $\tau\subset \sigma_\Gamma$ with $Q$ the dual monoid of $\tau$, we can ask for logarithmic enhancements of $C$ over $(\Speck,Q)$, again with a fixed identification of the dual graph with $\Gamma$. The moduli space of such enhancements is constructed similarly, 

\subsection{Logarithmic maps to Artin fans} \label{sec: log maps to Artin fans} \red{We will use the following} method for constructing logarithmic maps to Artin fans. Given an Artin fan $(\Acal_X|\Dcal_X)$ and a logarithmic curve $\Ccal \to (\Speck,Q)$, the data of a logarithmic map
\[ \Ccal \to (\Acal_X|\Dcal_X)\]
is equivalent to the data of a map of cone complexes
\begin{equation} \label{eqn: tropical map} \Trop\Ccal =\sqC \to \Sigma= \Trop(\Acal_X|\Dcal_X)\end{equation}
see~\cite[Proposition~2.10]{AbramovichChenGrossSiebertDegeneration}. Here \[ \Trop\Ccal =\sqC \to \tau=\Hom(Q,\RR_{\geq 0})\] is the tropicalisation of $\Ccal \to (\Speck,Q)$.

Typically we start with a scheme-theoretic curve $C$, and proceed by first constructing an appropriate tropical curve $\sqC \to \tau$ and accompanying map $\sqC \to \Sigma$. Once this has been done, it remains to construct any logarithmic enhancement of $C$ which tropicalises to the given tropical curve, which is achieved via the process outlined in Section~\ref{sec: moduli of log curves} above.

\section{Naive--orbifold comparisons}\label{sec:rootsandnaive}

\noindent A naive stable map to a simple normal crossings pair $(X|D)$ is a stable map to $X$ together with a logarithmic lift to each pair $(X|D_i)$. The theory was studied under positivity assumptions in genus zero by the second and third authors~\cite{NabijouThesis,MaxContacts}. The spaces were introduced as part of a framework to reduce questions in logarithmic Gromov--Witten theory to questions in the simpler relative theory. It was later realised that both the geometry of these spaces and their enumerative invariants closely mirror the orbifold theory~\cite[Theorem B]{BNTY}.

We now construct naive Gromov--Witten theory for simple normal crossings pairs without positivity or genus restrictions. In genus zero, we equate the naive invariants with the orbifold invariants, removing the spurious assumptions in~\cite[Theorem B]{BNTY}. In higher genus, we conjecture that the naive invariants coincide with the constant-term orbifold invariants described in~\cite{TsengYouSNC}.

A reader less familiar with orbifold geometry may wish to examine the definition of naive Gromov--Witten theory, take the equality with the orbifold theory as a given, and skip to the later sections where the orbifold theory is dispensed with.

\subsection{Setup} \label{sec: setup} Consider a simple normal crossings pair $(X|D)$. We equip $X$ with the divisorial logarithmic structure corresponding to $D$. Fix numerical data $\Lambda$ for a moduli space of logarithmic stable maps to $(X|D)$. This consists of a genus $g$, a labeling set $\{1,\ldots,n\}$ for the marked points, a curve class $\upbeta \in A_1(X)$, and tangency orders to $D$ at each marked point. The latter are encoded by a list of integral points
\[ 
\upalpha_1,\ldots,\upalpha_n \in \Sigma_X(\N).
\]
Let $D_1,\ldots,D_k$ be the smooth components of the divisor. The tangency conditions are equivalently given by non-negative integers $c_{ij}$ for $1 \leq i \leq k$ and $1 \leq j \leq n$. If $w_i$ is the primitive lattice vector generating the ray corresponding to $D_i$ then for each $j$ we have
\[ \upalpha_j = \sum_{1 \leq i \leq k} c_{ij} w_i. \]

\begin{remark} If every intersection of divisor components $\cap_{i\in I}D_i$ is connected then the data $c_{ij}$ ranging over all $i$ are equivalent to $\upalpha_j$. Connectivity of strata intersections can always be achieved after additional blowups. If the intersections are disconnected, then $\upalpha_j$ contains more information than the $c_{ij}$: it specifies which connected component of the intersection contains $p_j$.\end{remark}

\begin{remark}\label{rem: global-balancing} The numerical data satisfies the \textbf{global balancing condition} with respect to each divisor component if the following equation holds
\begin{equation} \label{eqn: global balancing} \sum_{1\leq j \leq n} c_{ij} = D_i \cdot \upbeta.\end{equation}
If the global balancing condition is not satisfied, the relevant moduli spaces of maps will be empty. We therefore impose it from the outset.\end{remark} 

The data $\Lambda$ induces numerical data for a moduli space of logarithmic stable maps to any pair $(X|E)$ with $E \subseteq D$ (including $E=\emptyset$). The inclusion $E\subseteq D$ determines a map
\[
\Trop(X|D)\to \Trop(X|E)
\]
obtained by projecting away from the coordinates corresponding to divisors not contained in $E$. Given an integral vector $\upalpha_j$ in $\Trop(X|D)$, its image in $\Trop(X|E)$ determines the contact data for the new pair. We abuse notation and denote the compatible numerical data by the same symbol $\Lambda$. Spaces of logarithmic stable maps are denoted
\[\Log_\Lambda(X|D).\]

The numerical data $\Lambda$ induces numerical data for a moduli of twisted\footnote{For readers less familiar with orbifold Gromov--Witten theory, the lectures of Abramovich may prove helpful~\cite{AbramovichLectures}.} stable maps~\cite{AbramovichVistoli,Cadman}. We consider the multi-root stack \cite{Cadman,TsengYouSNC}
\[ \Xcal = X_{D,\vec{r}} \]
where $\vec{r}=(r_1,\ldots,r_k)$ is a vector of rooting parameters. 	The moduli problem considers representable maps from twisted curves, namely $1$-dimensional Deligne--Mumford stacks with stack structure imposed at nodes and markings. Let $\mathcal C$ be a twisted curve, and consider
\[
f:\mathcal C\to \mathcal X.
\]
Let $D_i/r_i$ in $\Xcal$ denote the substack lying over $D_i$ in $X$. The pullback $f^\star \OO_{\Xcal}(D_i/r_i)|_{p_j}$ is a character of the isotropy group at $p_j$. If the order of the stabiliser at $p_j$ is $s_j$, then this character is an integer $k_j$ lying between $0$ and $s_j-1$. The age of $f^\star \OO_{\Xcal}(D_i/r_i)|_{p_j}$ is \[k_j/s_j \in [0,1)\cap \mathbb{Q}.\] The age is related to the tangency order $c_{ij}$ via the formula
\[ 	c_{ij} = r_i \cdot (k_j/s_j)\]
see \cite[(2.1.4)]{CadmanChen}. The same applies for any $E \subseteq D$. Spaces of twisted stable maps to $\Xcal$ will be denoted
\[ \Orb_\Lambda(X|D)\]
with the auxiliary rooting parameters understood to be arbitrarily large.

\red{Finally, for our comparison arguments we will make use of the following hybrid version of the previous two moduli spaces. 
\begin{definition} We denote by $\LogOrb_\Lambda(X|D)$ the moduli space parametrising: \begin{enumerate}[(i)]
\item A logarithmic twisted curve $\Ccal$ in the sense of Olsson \cite{OlssonLogTwisted}. Denoting the coarse moduli map by $\rho\colon\Ccal\to C$, the twisting is encoded in a Kummer extension of logarithmic structures
\[ \rho^*M_C\to M_{\Ccal}.\]
\item A commuting diagram of representable logarithmic morphisms
\bcd[ampersand replacement=\&]
{(\Ccal,M_{\Ccal})}\ar[r]\ar[d] \& {(\Xcal,\Dcal)}\ar[d] \\
{(C,M_C)}\ar[r] \& {(X,D).}
\ecd
\end{enumerate}
It is by now standard to verify that this problem is represented by a proper Deligne--Mumford stack with logarithmic structure. Its virtual fundamental class is discussed in the proof of Theorem \ref{thm: naive is orbifold} below. This moduli space admits forgetful morphisms to both $\Log_\Lambda(X|D)$ and $\Orb_\Lambda(X|D)$.
\end{definition}}

\subsection{Naive invariants}\label{sec: naive space definition} The \textbf{naive space} $\Nup_\Lambda(X|D)$ is defined as the fibre product of logarithmic mapping spaces with respect to the smooth divisor components
\begin{equation} \label{eqn: naive space definition}
\begin{tikzcd}
\Nup_\Lambda(X|D) \ar[r] \ar[d] \ar[rd,phantom,"\square"] & \prod_{i=1}^k \Log_\Lambda(X|D_i) \ar[d] \\
\Mup_\Lambda(X) \ar[r] & \Mup_\Lambda(X)^k.	
\end{tikzcd}
\end{equation}
Crucially, the above fibre product is taken in the category of algebraic stacks, and not in the category of fine and saturated logarithmic algebraic stacks.

We now manipulate this cartesian diagram, in order to highlight the existence and role of the \emph{universal} naive space. This will allow us to endow every (geometric) naive space with a virtual fundamental class, and to define naive Gromov--Witten invariants in all genera.

Let $\Acal=[\Aaff^{\! 1}/\Gm]$ and $\Dcal=[0/\Gm] \subseteq \Acal$ so that $(\Acal|\Dcal)$ is the universal smooth pair. We shall denote by $\Mfrak(\Acal)$ the moduli space of prestable curves endowed with a line bundle-section pair (or generalised Cartier divisor). We shall denote by $\mathfrak{Log}(\Acal|\Dcal)$ the moduli space of logarithmic prestable maps to $(\Acal|\Dcal)$, i.e. logarithmically smooth curves together with a line bundle-section pair such that the section is a logarithmic morphism, where the line bundle is endowed with the sum of the logarithmic structure pulled back from the curve \emph{and} the logarithmic structure induced by the zero section. This space is known to admit a minimal logarithmic structure. \lu{Since
\[ \Trop (\Acal|\Dcal) = \RR_{\geq0},\]
tropicalising a logarithmic prestable map to $(\Acal|\Dcal)$ produces a tropical curve equipped with a piecewise linear function.}

For each smooth divisor component $D_i$ there is a cartesian square \cite[Lemma~A(iv)]{AbramovichMarcusWise}
\begin{equation} \label{eqn: Cartesian square for smooth divisor log space and universal target}
\begin{tikzcd}
\Log_\Lambda(X|D_i) \ar[r] \ar[d] \ar[rd,phantom,"\square"] & \mathfrak{Log}(\Acal|\Dcal) \ar[d] \\
\Mup_\Lambda(X) \ar[r] & \Mfrak(\Acal)	
\end{tikzcd}
\end{equation}
where the morphism $\Mup_\Lambda(X) \to \Mfrak(\Acal)$ performs post-composition with the map $X \to \Acal$ induced by $D_i\subseteq X$. We obtain
\[
\begin{tikzcd}
\Nup_\Lambda(X|D) \ar[r] \ar[d] \ar[rd,phantom,"\square"] & \mathfrak{Log}(\Acal|\Dcal)^k \ar[d] \\
\Mup_\Lambda(X) \ar[r] & \Mfrak(\Acal)^k
\end{tikzcd}
\]
and this square fits into the following commutative diagram
\begin{equation}\label{eqn: big Cartesian diagram}
\begin{tikzcd}
\Nup_\Lambda(X|D) \ar[r] \ar[d] \ar[rd,phantom,"\text{A}"] & \mathfrak{Log}(\Acal|\Dcal)^k \times_{\Mfrak^k} \Mfrak \ar[r] \ar[d] \ar[rd,phantom,"\text{B}"] & \mathfrak{Log}(\Acal|\Dcal)^k \ar[d] \\
\Mup_\Lambda(X) \ar[swap,r,"p"] & \Mfrak(\Acal)^k \times_{\Mfrak^k} \Mfrak \ar[r] \ar[d] \ar[rd,phantom,"\text{C}"] & \Mfrak(\Acal)^k \ar[d] \\
\, & \Mfrak \ar[swap,r,"\Delta"] & \Mfrak^k.
\end{tikzcd}	
\end{equation}
The squares $\text{C}$ and $\text{B+C}$ are cartesian and therefore $\text{B}$ is cartesian. Since $\text{A+B}$ is cartesian it follows that $\text{A}$ is cartesian. Finally, we have $\Mfrak(\Acal)^k \times_{\Mfrak^k} \Mfrak =\Mfrak(\Acal^k)$, and so we obtain the desired square
\begin{equation}\label{eqn: diagram for naive space and universal naive space}
\begin{tikzcd}
\Nup_\Lambda(X|D) \ar[r] \ar[d] \ar[rd,phantom,"\square"] & \mathfrak{Log}(\Acal|\Dcal)^k \times_{\Mfrak^k} \Mfrak \ar[d] \\
\Mup_\Lambda(X) \ar[swap,r,"p"] & \Mfrak(\Acal^k).
\end{tikzcd}	
\end{equation}

\begin{definition}\label{def:universalNaiveSpace}
The \textbf{naive space for the universal target} is the stack
\[ \mathfrak{NLog}(\Acal^k|\partial \Acal^k) \colonequals \mathfrak{Log}(\Acal|\Dcal)^k \times_{\Mfrak^k} \Mfrak.\]
\end{definition}

\begin{remark} \label{rmk: equivalent descriptions of universal naive} A diagram chase shows that the naive space for the universal target is equivalently given by any of the following three fibre products
\[ \mathfrak{Log}(\Acal|\Dcal)^k \times_{\Mfrak^k} \Mfrak = \mathfrak{Log}(\Acal|\Dcal)^k \times_{\Mfrak(\Acal)^k} \Mfrak(\Acal^k) = \Pi_{i=1}^k \mathfrak{Log}(\Acal^k|\Dcal_i) \times_{\Mfrak(\Acal^k)^k} \Mfrak(\Acal^k)\]
where $\Dcal_i \subseteq \Acal^k$ is the $i$th coordinate hyperplane. The third description is closest to the definition of the naive space for geometric targets. The first description is the most useful for proofs.
\end{remark}

We claim that $p \colon \Mup_\Lambda(X) \to \Mfrak(\Acal^k)$ has a relative perfect obstruction theory. If $f$ denotes the universal stable map and $\pi$ is the universal curve, the obstruction theory is the dual of
\begin{equation} \label{eqn: pi star f star Tlog} \Rder \pi_\star f^\star T^{\log}_{X|D}. \end{equation}
When $k=1$ the result is \cite[Proposition~3.1.1]{AbramovichMarcusWise}. The general case has the same proof, noting that when $X$ is equipped with the divisorial logarithmic structure from $D$ we have
\[ \Omega_{X/\operatorname{Log}} = \Omega_{X/\Acal^k} = \Omega^{\log}_{X|D}.\]
The relative obstruction theory \eqref{eqn: pi star f star Tlog} for $p$ induces a virtual pullback $p_{\operatorname{v}}^!$ on Chow groups \cite{ManolachePull}.

The naive space for the universal target is typically not pure-dimensional when $k>1$ (see Remark~\ref{rmk: naive not pure dimensional} below). Nevertheless, its presentation as an intersection of Artin stacks as above endows it with a class of the expected dimension, by Fulton--Kresch intersection theory~\cite{Kresch}. Fixing the numerical data, each factor $\mathfrak{Log}(\Acal|\Dcal)$ is irreducible and carries a fundamental class \cite[Proposition 1.6.1]{AbramovichWiseBirational}. The \textbf{virtual class for the naive space} is therefore defined as
\[ [\Nup_\Lambda(X|D)]^{\virt} \colonequals p^!_{\op{v}}\ \Delta^! \! \left( [\mathfrak{Log}(\Acal|\Dcal)] \times \cdots \times [\mathfrak{Log}(\Acal|\Dcal)]\right).\]
Unusually, this virtual class arises via virtual pullback $p^!_{\op{v}}$ from a reducible base. On the other hand, the composition $p^!_{\op{v}}\ \Delta^!$ of pullbacks can be viewed as endowing $\Nup_\Lambda(X|D)$ with a perfect obstruction theory over the irreducible $\mathfrak{Log}(\Acal|\Dcal)^k$.

\begin{remark}
 \red{We verify that Kresch's hypotheses are satisfied in our case. The stabilisers in $\Mfrak$ are affine and hence by \cite[Proposition 3.5.9]{Kresch} the stack admits a stratification by global quotients. Since it is also smooth, by \cite[p. 530]{Kresch} it admits a refined intersection product. See also \cite[Section~3.1]{BaeSchmitt} for a discussion of these issues in a similar context.}
\end{remark}

We now define numerical naive Gromov--Witten invariants. The space $\Nup_\Lambda(X|D)$ admits a map
\[
\Nup_\Lambda(X|D)\to \mathsf M_\Lambda(X)
\]
and by composing with the evaluation maps $\ev_i:\mathsf M_\Lambda(X)\to X$ we obtain evaluation maps to $X$. Fix a marked point $p_i$ and let $W_i$ be the intersection of divisor components $D_j$ with respect to which $p_i$ has positive contact order. By definition of the naive space, the evaluation morphism at $p_i$ on $\Nup_\Lambda(X|D)$ factors through this intersection. We obtain 
\[
\ev_i: \Nup_\Lambda(X|D)\to W_i.
\]

\begin{definition} \textbf{Naive Gromov--Witten invariants} of $(X|D)$ with numerical data $\Lambda$ are defined as tautological integrals over the naive virtual class
 \[\langle \tau_{a_1}(\gamma_1),\ldots,\tau_{a_n}(\gamma_n)\rangle^{\Nup(X|D)}_\Lambda=\int_{[\Nup_\Lambda(X|D)]^{\vir}}\prod_{i=1}^n\psi_i^{a_i}\ev_i^\star(\gamma_i)\]
  where $W_i\subset X$ is the stratum of $X$ determined by the marked point $p_i$, the class $\gamma_i$ is in the Chow cohomology of $W_i$, and the $\psi$-classes are pulled back from $\Mup_\Lambda(X)$.
\end{definition}

\begin{remark} \label{rmk: naive not pure dimensional} The stack $\mathfrak{NLog}(\Acal^k | \partial \Acal^k)$ is a fibre product of toroidal, i.e. logarithmically smooth, stacks. This fibre product is not transverse along certain boundary strata, and may consist of several irreducible components. The failure of transversality can be detected from the tropicalisations \cite{AbramovichKaru,Molcho}. The fact that there are many irreducible components can be seen from the examples in~\cite[Section~1]{MaxContacts}. These examples consider the space of naive and logarithmic maps to $\mathbb P^2$ relative to two lines. For brevity we call it the \textit{geometric} {naive space}. The examples imply the lack of irreducibility in the universal setting. Indeed, this geometric target is logarithmically convex, and therefore the space of logarithmic maps to this target is smooth over the space of logarithmic maps to the Artin fan. Similarly, by the description of the obstruction theory above, the map from this geometric naive space to the universal one is smooth. The examples in~\cite{MaxContacts} identify points in the geometric naive space of $\mathbb P^2$ that do not lie in the closure of the locus of non-degenerate maps. It follows that there exist naive maps to the Artin fan that cannot be deformed to non-degenerate maps.  

The strata of the geometric and universal naive spaces can be understood as fibre products of strata of the geometric and universal spaces of maps to smooth pairs. The relationship between the strata of these latter spaces can be understood, for example, using~\cite[Section~1.1]{KHNSZ22}. 
\end{remark}

\subsection{Naive-orbifold correspondence} \label{sec: orbifold is naive} We now restrict our attention to \emph{genus zero}. We show that naive invariants and orbifold invariants coincide in this case. This removes the convexity hypothesis of \cite[Theorem B]{BNTY}. 

\begin{theorem} \label{thm: naive is orbifold} There is a diagram of spaces connected by virtually birational morphisms
\[
\begin{tikzcd}
\Orb_\Lambda(X|D) \ar[r] & \mathsf{NOrb}_\Lambda(X|D) & \mathsf{NLogOrb}_\Lambda(X|D) \ar[l] \ar[r] &  \Nup_\Lambda(X|D).
\end{tikzcd}
\]
\end{theorem}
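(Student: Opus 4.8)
The plan is to produce the three virtually birational morphisms in the diagram by comparing moduli of twisted stable maps with moduli of naive maps, using the intermediate spaces $\mathsf{NOrb}$ and $\mathsf{NLogOrb}$ as explicit factorisations of the root construction. The key organising principle is that all four spaces satisfy a product formula over the space of absolute maps $\Mup_\Lambda(X)$, so it suffices to establish the comparisons factor-by-factor with respect to each smooth divisor $D_i$, where the logarithmic rank is one. First I would recall the rank-one comparison: for a smooth pair $(X|D_i)$ with sufficiently large rooting parameter $r_i$, the orbifold space $\Orb_\Lambda(X|D_i)$ and the logarithmic space $\Log_\Lambda(X|D_i)$ are connected by a virtually birational morphism, by the work of Abramovich--Cadman--Wise and Abramovich--Marcus--Wise~\cite{AbramovichCadmanWise,AbramovichMarcusWise}. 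The spaces $\mathsf{NOrb}$, $\mathsf{NLogOrb}$ and $\Nup$ are then built as fibre products of these rank-one building blocks over $\Mup_\Lambda(X)$, exactly mirroring the construction of $\Nup_\Lambda(X|D)$ in~\eqref{eqn: naive space definition}.

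The heart of the argument is the passage of virtual birationality through the fibre products. I would define $\mathsf{NOrb}_\Lambda(X|D)$ as the fibre product over $\Mup_\Lambda(X)$ of the orbifold spaces $\Orb_\Lambda(X|D_i)$ attached to the smooth pairs, and $\mathsf{NLogOrb}_\Lambda(X|D)$ as the corresponding fibre product of the intermediate root-and-log spaces whose existence the rank-one comparison furnishes. The morphism $\Orb_\Lambda(X|D)\to\mathsf{NOrb}_\Lambda(X|D)$ is then the orbifold product formula: the multi-root stack $X_{D,\vec r}$ is the fibre product of the single-root stacks $X_{D_i,r_i}$ over $X$, so a twisted map to the multi-root stack is the same as a compatible collection of twisted maps to the single-root stacks, and the product formulas in orbifold Gromov--Witten theory~\cite{AJTProducts,BNTY} identify the virtual classes under proper pushforward. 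The two rightward and leftward arrows from $\mathsf{NLogOrb}$ are obtained by applying the rank-one virtual birationality in each factor and invoking that virtual pullback commutes with the Gysin and diagonal refinements used in the definition of the naive virtual class.

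The main obstacle I expect is verifying that the virtual classes match under these morphisms, rather than merely the underlying coarse birationality. Each naive-type space carries a virtual class defined via the refined Gysin pullback $\Delta^!$ and the virtual pullback $p^!_{\op v}$ along $p\colon\Mup_\Lambda(X)\to\Mfrak(\Acal^k)$, and one must check that the product-formula and rank-one comparison morphisms are compatible with precisely this construction. Concretely, the obstruction theory~\eqref{eqn: pi star f star Tlog} is $\Rder\pi_\star f^\star T^{\log}_{X|D}$, and on the orbifold side the analogous obstruction theory involves the tangent complex of the multi-root stack; the content is that these differ only by contributions local to the boundary that are killed upon pushforward to $\Mup_\Lambda(X)$. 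I would handle this by working relative to the universal targets, where the comparison becomes a statement about $\mathfrak{Log}(\Acal|\Dcal)$ versus the universal orbifold space, and the obstruction theories are pulled back from these universal spaces along the map $p$; compatibility of virtual pullback with fibre products then reduces the multi-rank statement to the rank-one case factor by factor.

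A subtlety worth flagging is that the genus-zero hypothesis enters precisely where the rank-one orbifold--logarithmic comparison is invoked: the identification of virtual classes for smooth pairs via proper pushforward is known unconditionally only in genus zero~\cite{AbramovichCadmanWise,AbramovichMarcusWise}, which is why Theorem~\ref{thm: naive is orbifold} is restricted to this case while the existence of the naive virtual class in Theorem~\ref{thmZ} holds in arbitrary genus. Once the three arrows are constructed and shown to identify virtual classes, the diagram is complete; no claim of a direct morphism $\Orb_\Lambda(X|D)\to\Nup_\Lambda(X|D)$ is made, and indeed the intermediate space $\mathsf{NLogOrb}$ exists precisely to mediate between the root constructions on the two sides.
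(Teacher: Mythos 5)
Your proposal follows essentially the same route as the paper: establish the rank-one orbifold--logarithmic comparison at the level of universal targets (the roof $\mathfrak{Orb}(\Acal_r) \leftarrow \mathfrak{LogOrb}(\Acal_r|\Dcal_r) \to \mathfrak{Log}(\Acal|\Dcal)$, birational after restricting to suitable open substacks, following Abramovich--Cadman--Wise), define the naive variants as fibre products over the moduli of curves, descend virtual classes to geometric targets by virtual pullback along $p \colon \Mup_\Lambda(X) \to \Mfrak(\Acal^k)$ using $p^\star T^{\log}_{X|D} = T^{\log}_{\Xcal|\Dcal}$, and supply the arrow $\Orb_\Lambda(X|D) \to \mathsf{NOrb}_\Lambda(X|D)$ via the orbifold product formula. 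Your identification of where genus zero enters, and your observation that no direct morphism $\Orb_\Lambda(X|D) \to \Nup_\Lambda(X|D)$ exists, both match the paper.

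One point in your sketch needs correction. You assert that ``a twisted map to the multi-root stack is the same as a compatible collection of twisted maps to the single-root stacks,'' since $X_{D,\vec r}$ is the fibre product of the $X_{D_i,r_i}$ over $X$. This is false as stated: projecting a representable map $\Ccal \to X_{D,\vec r}$ onto a single factor generally destroys representability, so the comparison morphism does not exist at this naive level. The paper defines $\varphi \colon \mathfrak{Orb}(\Pi_{i=1}^k \Acal_{r_i}) \to \mathfrak{NOrb}(\Acal^k|\partial\Acal^k)$ by taking, for each projection, the \emph{relative coarse moduli space} (minimal partial coarsening of $\Ccal$ making the composite representable, via \cite[Section~9]{AbramovichVistoli} and \cite[Theorem~3.1]{AbramovichOlssonVistoli}). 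This is not a cosmetic detail: the discrepancy between $\Ccal$ and its partial coarsenings is exactly why $\Orb_\Lambda(X|D) \to \mathsf{NOrb}_\Lambda(X|D)$ is only virtually birational rather than an isomorphism, and it is the substance of the product formula you invoke --- if your ``same as'' claim were literally true, that formula would have no content. Relatedly, the paper applies the product formula to the universal target $\Pi_{i=1}^k \Acal_{r_i}$ (noting the standard statements for Deligne--Mumford targets extend to this Artin setting) and then descends by Manolache's formalism, rather than applying it directly at the geometric level as you do; your version works, but the universal formulation is what lets the obstruction-theoretic bookkeeping reduce cleanly to the square you call cartesian. With the partial-coarsening construction inserted, your argument aligns with the paper's proof.
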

This means that every space in the above diagram is endowed with a virtual fundamental class, and these classes coincide under pushforward along the proper morphisms in the diagram. \red{The auxiliary spaces $\mathsf{NOrb}_\Lambda(X|D)$ and $\mathsf{NLogOrb}_\Lambda(X|D)$ will be defined below.}
\begin{corollary} The naive and orbifold invariants coincide
	 \[\langle \tau_{a_1}(\gamma_1),\ldots,\tau_{a_n}(\gamma_n)\rangle^{\Nup(X|D)}_\Lambda = \langle \tau_{a_1}(\gamma_1),\ldots,\tau_{a_n}(\gamma_n)\rangle^{\mathrm{Orb}(X|D)}_{\Lambda.}\]
\end{corollary}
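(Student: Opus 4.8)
The plan is to deduce the corollary directly from Theorem~\ref{thm: naive is orbifold} by combining the projection formula with the compatibility of the tautological integrands across the diagram. Both sides of the claimed identity are integrals of the same shape, namely $\int_{[\,\cdot\,]^{\vir}} \prod_{i=1}^n \psi_i^{a_i}\,\ev_i^\star(\gamma_i)$, evaluated against the virtual class of $\Orb_\Lambda(X|D)$ and of $\Nup_\Lambda(X|D)$ respectively. Theorem~\ref{thm: naive is orbifold} already supplies the essential geometric input: each morphism in the zig-zag $\Orb_\Lambda(X|D) \to \mathsf{NOrb}_\Lambda(X|D) \leftarrow \mathsf{NLogOrb}_\Lambda(X|D) \to \Nup_\Lambda(X|D)$ is proper and identifies virtual fundamental classes under pushforward. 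What remains is to check that the integrand is carried to the integrand along each arrow, after which the projection formula closes the argument.

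First I would record that all four spaces carry a forgetful morphism to the absolute space $\Mup_\Lambda(X)$ --- for $\Nup_\Lambda(X|D)$ this is built into its definition as a fibre product over $\Mup_\Lambda(X)$, while for $\Orb_\Lambda(X|D)$ and the two intermediate spaces it is the coarsening of a twisted stable map to its underlying absolute map --- and that every morphism in the diagram commutes with these forgetful maps. Consequently the $\psi$-classes, which by the conventions in force are pulled back from $\Mup_\Lambda(X)$, agree under pullback along each arrow. I would make the same observation for the evaluation maps: at each marked point $p_i$, all four evaluation morphisms target the stratum $W_i \subseteq X$ determined by the contact data and commute with the morphisms of the diagram, so that $\ev_i^\star(\gamma_i)$ pulls back to $\ev_i^\star(\gamma_i)$. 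Granting these compatibilities, the full integrand $\prod_i \psi_i^{a_i}\,\ev_i^\star(\gamma_i)$ on any one space is the pullback of the corresponding integrand on the adjacent space.

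With the integrands matched, the equality follows arrow by arrow. For a proper morphism $\phi$ in the diagram with $\phi_\star[\text{source}]^{\vir} = [\text{target}]^{\vir}$ and integrand $\omega = \phi^\star\omega'$, the projection formula gives $\int_{[\text{source}]^{\vir}} \phi^\star\omega' = \int_{\phi_\star[\text{source}]^{\vir}} \omega' = \int_{[\text{target}]^{\vir}} \omega'$. Applying this to $\Orb_\Lambda(X|D) \to \mathsf{NOrb}_\Lambda(X|D)$, then to $\mathsf{NLogOrb}_\Lambda(X|D) \to \mathsf{NOrb}_\Lambda(X|D)$, and finally to $\mathsf{NLogOrb}_\Lambda(X|D) \to \Nup_\Lambda(X|D)$, chains the four integrals into a single string of equalities and yields the stated identity.

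The hard part is the bookkeeping in the second paragraph, specifically the reconciliation of the orbifold-theoretic tautological classes with their naive counterparts. The orbifold evaluation morphism a priori lands in the rigidified inertia stack rather than in $W_i$ directly, and the orbifold $\psi$-class differs from the class pulled back from the coarse curve by a factor recording the order of the marked-point stabiliser. The cleanest way to neutralise these discrepancies is to fix, once and for all, the convention that both families of invariants are defined using evaluation maps to the coarse strata $W_i$ and $\psi$-classes pulled back from $\Mup_\Lambda(X)$; the point to verify is then that this convention reproduces the standard orbifold invariants, that is, that passing to twisted sectors and coarsening is compatible with the morphisms supplied by Theorem~\ref{thm: naive is orbifold}. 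Once this normalisation is pinned down, the remainder is formal.
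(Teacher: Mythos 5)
Your proposal is correct and follows the same route as the paper, whose entire proof reads ``This follows immediately from the previous theorem and the projection formula'': you simply make explicit the compatibility checks (all spaces fibre over $\Mup_\Lambda(X)$, so the $\psi$-classes and evaluation maps to the strata $W_i$ pull back consistently along each arrow of the zig-zag) that the paper leaves implicit. Your third-paragraph caveat about normalising orbifold $\psi$-classes and inertia-stack evaluations to the coarse conventions is a legitimate point the paper silently absorbs into its definitions, but it does not change the argument.
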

\begin{proof} This follows immediately from the previous theorem and the projection formula
\end{proof}

\begin{proof}[Proof of Theorem~\ref{thm: naive is orbifold}] We start with the case of a smooth pair. This is a minor modification of the main results of \cite{AbramovichCadmanWise}. For a fixed rooting index $r$ the morphism
\[ \Acal_r \to \Acal \]
will denote the morphism $[\Aaff^1/\Gm] \to [\Aaff^1/\Gm]$ given by $t \mapsto t^r$. This is the \textbf{universal} $r$-th root construction. While $\Acal_r$ is abstractly isomorphic to $\Acal$, we retain the subscript to indicate that the morphism $\Acal_r \to \Acal$ is not the identity.

Consider the moduli stack $\mathfrak{Orb}(\Acal_r)$ parametrising \emph{representable} maps $\Ccal \to \Acal_r$ from a prestable twisted curve, i.e. the data of a generalised Cartier divisor $(\Lcal,s)$ on $\Ccal$ such that, for every point $p$ in $\Ccal$ at which $s(p)=0$, the induced action $\mu_{r(p)} \curvearrowright \Lcal_p$ is faithful, where the integer $r(p)$ is the twisting index. Furthermore, the latter is required to divide $r$, so that the $r$-th tensor power of $\Lcal$ is pulled back from the coarse curve of $\Ccal$.

On the other hand, we define a moduli stack $\mathfrak{LogOrb}(\Acal_r|\Dcal_r)$ whose objects consist of the following data:
\begin{enumerate}[(i)]
\item a logarithmic twisted curve $\Ccal$; 
\item a representable generalised Cartier divisor $(\Lcal,s)$ on $\Ccal$; 
\item \lu{an enhancement of the section to a logarithmic morphism} $s \colon \Ccal \to \Lcal$, where $\Lcal$ is equipped with the sum of the logarithmic structure induced by the pullback of the logarithmic structure on $\Ccal$ and the divisorial logarithmic structure corresponding to the zero section.
\end{enumerate}
It is not hard to see that this moduli space admits a minimal logarithmic structure too. The last condition above ensures that taking the $r$-th power of $(\Lcal,s)$ and forgetting the Kummer extension of the logarithmic structure defines a morphism  $\mathfrak{LogOrb}(\Acal_r|\Dcal_r) \to \mathfrak{Log}(\Acal|\Dcal)$. On the other hand, forgetting the logarithmic structure \emph{in toto} we obtain a morphism $\mathfrak{LogOrb}(\Acal_r|\Dcal_r) \to \mathfrak{Orb}(\Acal_r)$. 

We have thus produced a roof
\begin{equation} \label{eqn: diagram maps universal target smooth divisor}
\begin{tikzcd}
\mathfrak{Orb}(\Acal_r)  & \mathfrak{LogOrb}(\Acal_r|\Dcal_r) \ar[r] \ar[l] & \mathfrak{Log}(\Acal|\Dcal).
\end{tikzcd}	
\end{equation}
As is typical when dealing with an Artin stack which is not of finite type, we must pass to an appropriate open substack which contains the image of the geometric moduli space under consideration. In our context this has been discussed carefully in \cite{AbramovichCadmanWise} where primed superscripts are used to indicate the chosen open substacks. From now on we replace each of the entries in \eqref{eqn: diagram maps universal target smooth divisor} by the appropriate open substack.

With this replacement, minor modifications of the arguments of Abramovich--Cadman--Wise then show that the morphisms in \eqref{eqn: diagram maps universal target smooth divisor} are birational. The passage from universal to geometric targets is then achieved using the following cartesian squares
\[
\begin{tikzcd}
\Log_\Lambda(X|D) \ar[r] \ar[d] \ar[rd,phantom,"\square"] & \Mup_\Lambda(X) \ar[d] & \, \Orb_\Lambda(X|D) \ar[r] \ar[d] \ar[rd,phantom,"\square"]  & \Mup_\Lambda(X) \ar[d] & \,  \mathsf{LogOrb}_\Lambda(X|D) \ar[r] \ar[d] \ar[rd,phantom,"\square"] & \Mup_\Lambda(X) \ar[d] \\
\mathfrak{Log}(\Acal|\Dcal) \ar[r] & \Mfrak(\Acal),  & \mathfrak{Orb}(\Acal_r) \ar[r] & \Mfrak(\Acal), & \, \mathfrak{LogOrb}(\Acal_r|\Dcal_r) \ar[r] & \Mfrak(\Acal).
\end{tikzcd}
\]
The first square is cartesian by \cite[Lemma A (iv)]{AbramovichMarcusWise}. Using the definition of the root stack as $X \times_{\Acal} \Acal_r$ it is easy to see that the second square is cartesian. The third square is taken as the definition of $\mathsf{LogOrb}_\Lambda(X|D)$.

In each case, the virtual fundamental class is the pullback of the fundamental class of the universal moduli space, using the virtual pullback morphism induced by the perfect obstruction theory for the map $\Mup_\Lambda(X) \to \Mfrak(\Acal)$ constructed in the previous section. For the first square this follows by a direct comparison of obstruction theories. For the second square, it follows by the identity
\begin{equation} \label{eqn: log tangent of root stack} p^\star T_{X|D}^{\log} = T_{\Xcal|\Dcal}^{\log} \end{equation}
where $p \colon \Xcal \to X$ is the morphism from the root stack to its coarse moduli space. This holds because $p$ is logarithmically \'etale. Finally for the third square this is taken as the definition of the virtual class on $\mathsf{LogOrb}_\Lambda(X|D)$.

From this, it follows that the birational diagram \eqref{eqn: diagram maps universal target smooth divisor} pulls back to a virtually birational diagram
\[
\begin{tikzcd}
\Orb_\Lambda(X|D) & \mathsf{LogOrb}_\Lambda(X|D) \ar[l] \ar[r] & \Log_\Lambda(X|D).
\end{tikzcd}
\]
This completes the discussion of smooth pairs. Now let $D=D_1+\ldots+D_k$ be simple normal crossings. Each of the three enumerative theories for smooth pairs considered above defines a corresponding naive theory for simple normal crossings pairs. To accompany Definition \ref{def:universalNaiveSpace}, we set forth some naturally defined universal naive spaces
\begin{align*}
 \mathfrak{NOrb}(\Acal^k|\partial \Acal^k) & \colonequals \prod_{i=1}^k \mathfrak{Orb}(\Acal_{r_i}) \times_{\Mfrak^k} \Mfrak,\\
 \mathfrak{NLogOrb}(\Acal^k|\partial \Acal^k) & \colonequals \prod_{i=1}^k \mathfrak{LogOrb}(\Acal_{r_i}|\Dcal_{r_i}) \times_{\Mfrak^k} \Mfrak.
 \end{align*}
While even the \emph{universal} naive spaces may be badly-behaved beyond the base case of a smooth pair, each carries a virtual fundamental class obtained by capping the product of fundamental classes of the factors with the diagonal in the space of curves. Since for smooth pairs the morphisms in \eqref{eqn: diagram maps universal target smooth divisor} preserve the fundamental classes, a diagram chase gives a virtually birational diagram
\[
\begin{tikzcd}
\mathfrak{NOrb}(\Acal^k|\partial \Acal^k)  & \mathfrak{NLogOrb}(\Acal^k| \partial \Acal^k) \ar[r] \ar[l] & \mathfrak{NLog}(\Acal^k|\partial \Acal^k).
\end{tikzcd}
\]
The geometric naive spaces are defined by fibring the universal naive spaces over the map
\[ \Mup_\Lambda(X) \to \Mfrak(\Acal)^k \times_{\Mfrak^k} \Mfrak = \Mfrak(\Acal^k). \]
Their virtual classes are pulled back from the virtual classes of the universal spaces, and the standard comparison of obstruction theories gives a virtually birational diagram
\[
\begin{tikzcd}
\mathsf{NOrb}_\Lambda(X|D)  & \mathsf{NLogOrb}_\Lambda(X|D) \ar[r] \ar[l] & \mathsf{NLog}_\Lambda(X|D) = \Nup_\Lambda(X|D).
\end{tikzcd}
\]
It remains to show that the naive orbifold theory coincides with the orbifold theory. We will show that there is a virtually birational morphism
\[ \Orb_\Lambda(X|D) \to \mathsf{NOrb}_\Lambda(X|D).\]
This is a generalised form of the relative product formula for orbifold invariants established in \cite[Theorem~3.1]{BNTY}. Consider the diagram
\[
\begin{tikzcd}
\Orb_\Lambda(X|D) \ar[r] \ar[d] \ar[rd,phantom,"{\text{A}}"] & \mathsf{NOrb}_\Lambda(X|D) \ar[d] \ar[r] \ar[rd,phantom,"{\text{B}}"] & \Mup_\Lambda(X) \ar[d] \\
\mathfrak{Orb}(\Pi_{i=1}^k \Acal_{r_i}) \ar[r,"\varphi"] & \mathfrak{NOrb}(\Acal^k|\partial \Acal^k) \ar[r] & \Mfrak(\Acal^k).	
\end{tikzcd}
\]
The morphism $\varphi$ is obtained by projecting $\Ccal \to \Pi_{i=1}^k \Acal_{r_i}$ onto each factor of the target and taking the minimal partial coarsening of $\Ccal$ which ensures that the composed map is representable. This partial coarsening is discussed in \cite[Section~9]{AbramovichVistoli} and \cite[Theorem~3.1]{AbramovichOlssonVistoli}, where it is referred to as the ``relative coarse moduli space''.

The square $\text{B}$ is cartesian by definition, and $\text{A+B}$ is clearly cartesian. It follows that $\text{A}$ is cartesian. The vertical arrows in $\text{A}$ are equipped with perfect obstruction theories, which are compatible by the identity \eqref{eqn: log tangent of root stack}. The bases of both vertical arrows in $\text{A}$ are equipped with virtual classes, recalled below. These virtual classes virtually pull back to the virtual classes on the top row of $\text{A}$. By Manolache's formalism \cite[Theorem~4.1]{ManolachePull}, in order to show that $\Orb_\Lambda(X|D) \to \mathsf{NOrb}_\Lambda(X|D)$ is virtually birational it suffices to show that $\varphi$ identifies virtual classes.

We first recall the virtual classes on the bottom row of $\text{A}$. The space $\mathfrak{Orb}(\Pi_{i=1}^k \Acal_{r_i})$ is equipped with a virtual class by a construction of Abramovich--Cadman--Wise. The forgetful morphism to the moduli stack $\Mfrak^{\mathrm{tw}}$ of twisted curves factors as
\[
\mathfrak{Orb}(\Pi_{i=1}^k \Acal_{r_i})\to \mathfrak{Orb}(\mathcal B\Gm^k)\to \Mfrak^{\mathrm{tw}}. 
\]
The first arrow takes a tuple of line bundles and sections and forgets the sections. The second arrow then forgets the line bundles. By \cite[Section~5.2]{AbramovichCadmanWise} the morphism
\[ \mathfrak{Orb}(\Pi_{i=1}^k \Acal_{r_i})\to \mathfrak{Orb}(\mathcal B\Gm^k)\]
is virtually smooth and equips the universal orbifold space $\mathfrak{Orb}(\Pi_{i=1}^k \Acal_{r_i})$ with a virtual class. It coincides with the standard virtual class construction in orbifold Gromov--Witten theory.

On the other hand the virtual class on $ \mathfrak{NOrb}(\Acal^k|\partial \Acal^k)$ is defined via diagonal pullback, analogously to the virtual class of $\mathfrak{NLog}(\Acal^k|\partial \Acal^k)$ defined in Section~\ref{sec: naive space definition}. The orbifold product formula for the target $\Pi_{i=1}^k \Acal_{r_i}$ then shows that the morphism
\[
\varphi: \mathfrak{Orb}(\Pi_{i=1}^k \Acal_{r_i}) \to \mathfrak{NOrb}(\Acal^k|\partial \Acal^k)
\]
identifies virtual classes~\cite{AJTProducts,BehrendProduct}. Note that the product formula is typically stated for schematic or Deligne--Mumford targets and over stable curves, but the arguments apply without change in this setting. 
\end{proof}

\begin{remark} In higher genus, we conjecture (Conjecture~\ref{conjecture higher genus}) that the naive invariants coincide with the constant-term orbifold invariants of \cite{TsengYouSNC}. The missing step is a universal formulation of the higher genus smooth divisor logarithmic-orbifold correspondence \cite{JPPZ2,TsengYouHigherGenus}. We will return to this in future work.\end{remark}

\section{Geometry of the naive space}
\noindent We restrict to genus zero for the rest of the paper. In Theorem~\ref{thm: naive is orbifold} we proved that orbifold and naive invariants coincide. We now begin a study of the more delicate relationship between naive and logarithmic invariants.

\subsection{Numerical criteria} \label{sec: Gathmann criterion} Consider the finite morphism from the naive space to the absolute mapping space
\[ \Nup_\Lambda(X|D) \to \Mup_\Lambda(X). \]
The geometric points of the image have an elementary characterisation, extending an insight originally due to Gathmann. In practice this allows one to explore the geography of the naive space with relative ease.

\noindent \textbf{Gathmann's condition} \cite[Remark 1.4]{GathmannRelative}. For each divisor component $D_i$ consider a connected component $Z$ of the subscheme $f^{-1}(D_i) \subseteq C$. Then:
\begin{enumerate}
\item If $Z$ is zero-dimensional, it must be supported at a marked point $p_j$ and have length $c_{ij}$.\medskip
\item Otherwise, $Z^{\op{red}}=C_0\subseteq C$ is a subcurve. In this case, let $C_1,\ldots,C_r$ denote the irreducible components of $C$ adjacent to $C_0$ and let $q_1,\ldots,q_r$ be the connecting nodes. For $1 \leq j \leq r$ let $m_j \in \N$ be the contact order of $f|_{C_j}$ to $D_i$ at the point $q_j$ in $C_j$. Then we must have
\[ \sum_{p_j\in C_0}c_{ij} -\sum_{1\leq j \leq r} m_j = D_i \cdot f_\star[C_0] = \deg f^\star \mathcal O_X(D_i)|_{C_0}.\]
\end{enumerate}

Gathmann's condition is closely related, though not identical, to the tropical balancing condition, which we now recall.

\noindent \textbf{Balancing condition.} Consider the logarithmic scheme $S = (\Speck,Q)$ with monoid $Q$ and let $f: \Ccal\to X$ be a logarithmic stable map over $S$. The tropicalisation of $\Ccal \to S$ is a family of tropical curves
\[
\sqC\to \sigma
\]
and tropicalising $f$ gives a morphism of cone complexes
\[
\f \colon \sqC\to \Sigma. 
\]
A piecewise linear function on $\sqC$ (or on $\Sigma$) gives rise to a generalised Cartier divisor on $C$ (or on $X$): see Section~\ref{sec: line-bundles}. We can now state the balancing condition~\cite[Section~1.4]{GrossSiebertLog}. 

Let $f: \Ccal\to X$ be as above and fix a piecewise linear function $\upalpha:\Sigma\to \RR$. There are two ways to produce a line bundle on $\Ccal$: (i) take the line bundle $f^\star\mathcal O_X(\upalpha)$, where $\mathcal O_X(\upalpha)$ is the line bundle on $X$ associated to $\upalpha$; (ii) take the line bundle $\OO_\Ccal(\f^\star \upalpha)$ on $C$ associated to the piecewise linear function
\[
\f^\star \upalpha: \sqC\xrightarrow{\f} \Sigma\xrightarrow{\upalpha} \mathbb R.
\]
The logarithmic morphism $f$ encodes an isomorphism $\OO_{\Ccal}(\f^\star \upalpha) \cong f^\star \OO_X(\upalpha)$. The \textbf{balancing condition} is the coarser statement that for every irreducible component $C^\prime \subseteq C$ we have
\begin{equation} \label{eqn: balancing condition definition}
\deg \mathcal O_{\Ccal}(\f^\star \upalpha)|_{C^\prime} = \deg f^\star\mathcal O_X(\upalpha)|_{C^\prime}.
\end{equation}

\begin{lemma} \label{lem: Gathmann criterion} A stable map in $\Mup_\Lambda(X)$ belongs to the image of $\Nup_\Lambda(X|D)$ if and only if it satisfies Gathmann's condition with respect to every divisor component $D_i$.\end{lemma}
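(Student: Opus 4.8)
The plan is to peel the statement down to the case of a single smooth divisor and then to a purely tropical existence question, which I then match against the two clauses of Gathmann's condition. Throughout I work in genus zero, as fixed at the start of this section.

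First I would exploit the defining fibre product. Since
\[
\Nup_\Lambda(X|D) = \Log_\Lambda(X|D_1) \times_{\Mup_\Lambda(X)} \cdots \times_{\Mup_\Lambda(X)} \Log_\Lambda(X|D_k),
\]
the image of $\Nup_\Lambda(X|D)$ in $\Mup_\Lambda(X)$ is exactly the intersection of the images of the projections $\Log_\Lambda(X|D_i) \to \Mup_\Lambda(X)$. Thus $f$ lies in the image if and only if it admits a logarithmic lift to each smooth pair $(X|D_i)$ separately. As Gathmann's condition is imposed divisor-by-divisor, it suffices to prove the following statement for a single smooth divisor: $f$ lifts to $\Log_\Lambda(X|D_i)$ if and only if it satisfies clauses (1) and (2) of Gathmann's condition for $D_i$.

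For the smooth divisor I would invoke the cartesian square \eqref{eqn: Cartesian square for smooth divisor log space and universal target}, which identifies the fibre of $\Log_\Lambda(X|D_i) \to \Mup_\Lambda(X)$ over $f$ with the set of logarithmic lifts of the induced generalised Cartier divisor $(f^\star \mathcal{O}_X(D_i), f^\star s_{D_i})$ to $\mathfrak{Log}(\Acal|\Dcal)$. Since $\Acal$ is its own Artin fan and $\Trop(\Acal) = \RR_{\geq 0}$, the correspondence of Section~\ref{sec: log maps to Artin fans} turns such a logarithmic lift into a piecewise linear function $\f_i \colon \sqC \to \RR_{\geq 0}$ on the tropicalisation of a logarithmic enhancement of $C$, and Section~\ref{sec: moduli of log curves} conversely realises any integral such $\f_i$ by an honest enhancement. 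Compatibility of $\f_i$ with the underlying section $f^\star s_{D_i}$ forces three things: $\f_i(v)=0$ precisely when $C_v \not\subseteq D_i$; the outgoing slope along the leg at a marked point $p_j$ equals the contact order $c_{ij}$; and, on every component, $\f_i$ is balanced in the sense of \eqref{eqn: balancing condition definition}, i.e.\ $\deg \mathcal{O}_C(\f_i)|_{C_v} = D_i \cdot f_\star[C_v]$. The lemma thus reduces to: such a balanced integral $\f_i$ exists if and only if Gathmann's condition holds.

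For necessity I would read off the two clauses from a given balanced $\f_i$. A zero-dimensional connected component $Z$ of $f^{-1}(D_i)$ lies on a vertex with $\f_i(v)=0$; it cannot sit at a smooth unmarked point (the logarithmic structure of the curve is supported only at nodes and markings) nor at a node between two off-divisor components (there $\f_i$ would have to rise to a positive value from two endpoints of value $0$, an impossibility), so $Z$ is a marked point and its length is the leg slope $c_{ij}$, giving clause (1). If instead $Z^{\op{red}}=C_0$ is a subcurve, I sum the balancing identity over the vertices of $C_0$: the slopes along internal edges cancel in pairs, the boundary edges to the adjacent off-divisor components $C_j$ contribute $-m_j$, and the legs contribute $c_{ij}$, yielding $\sum_{p_j\in C_0}c_{ij}-\sum_j m_j = D_i\cdot f_\star[C_0]$, which is clause (2). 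For sufficiency I must instead build $\f_i$ from Gathmann's condition: set $\f_i\equiv 0$ off the union of the $C_0$'s, fix the leg slopes to be $c_{ij}$ and the boundary-edge slopes to be $m_j$, and solve for vertex values and edge lengths inside each $C_0$. Balancing at the off-divisor vertices then holds automatically, since clause (1) guarantees that $D_i\cdot f_\star[C_v]$ decomposes exactly into the marked-point tangencies $c_{ij}$ and the node tangencies $m_j$. Inside $C_0$ the genus zero hypothesis is decisive: the dual graph is a tree, so the balancing identities determine all edge slopes uniquely by a downstream-flow computation, clause (2) is precisely the global degree condition ensuring these slopes integrate to a well-defined function, and the remaining freedom in the positive edge lengths lets me keep $\f_i$ strictly positive on $C_0$.

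The main obstacle is this last, sufficiency, step: promoting the single aggregate equality of clause (2) to a genuine integral, balanced, strictly positive piecewise linear function on each divisorial subcurve. The necessity direction and all the bookkeeping reductions are formal, but producing the function requires the tree structure to pin down the edge slopes together with enough room in the cone of positive edge lengths to maintain positivity. This is exactly the point at which the distinction flagged in the text between Gathmann's condition (a statement about $f$) and the balancing condition (a statement about a candidate $\f_i$) becomes substantive.
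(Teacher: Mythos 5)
Your reduction to smooth pairs via the fibre product and your necessity argument agree in substance with the paper, but your sufficiency step contains a genuine gap. You assert that the lemma ``reduces to: such a balanced integral $\f_i$ exists if and only if Gathmann's condition holds''. This conflates two different statements. By the square \eqref{eqn: Cartesian square for smooth divisor log space and universal target}, a point of the fibre of $\Log_\Lambda(X|D_i) \to \Mup_\Lambda(X)$ over $f$ is a point of $\mathfrak{Log}(\Acal|\Dcal)$ lying over the \emph{given} point $(f^\star\OO_X(D_i), f^\star s_{D_i})$ of $\Mfrak(\Acal)$. The machinery of Sections~\ref{sec: moduli of log curves} and~\ref{sec: log maps to Artin fans} realises your tropical map by \emph{some} logarithmic map $\Ccal \to (\Acal|\Dcal)$, but the generalised Cartier divisor underlying that map is $(\OO_\Ccal(\f_i), s_{\f_i})$, and nothing in your construction shows this is isomorphic, as a bundle-with-section, to the given one. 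The balancing condition \eqref{eqn: balancing condition definition} that your combinatorial solution guarantees is, as the paper itself stresses, only ``the coarser statement'' of equality of multidegrees; a logarithmic lift encodes the finer isomorphism $\OO_{\Ccal}(\f^\star \upalpha) \cong f^\star \OO_X(\upalpha)$ compatible with sections. Your closing paragraph locates the ``main obstacle'' in the combinatorics (tree flow, positivity of edge lengths), but that is the easy half: the substantive missing step is the geometric matching over the fixed point of $\Mfrak(\Acal)$.

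In genus zero the gap is bridgeable, and sketching the repair makes clear what is absent: on a tree of $\PP^1$'s a line bundle is determined by its multidegree, so balancing does identify the bundles; clause (1) together with the geometric node tangencies identifies the divisor of $f^\star s_{D_i}$ on each off-divisor component with the weighted sum of special points cut out by $\f_i$, so the two sections there differ by scalars; and these scalars can be normalised using the torus of enhancements of Section~\ref{sec: moduli of log curves}, propagating along the tree --- compare the proof of Lemma~\ref{lem: tensor-business}, which performs exactly this kind of identification via the divisor description of \cite[Proposition~2.4.1]{RanganathanSantosParkerWise1}. The paper sidesteps all of this by a different route: Gathmann's condition yields meromorphic sections with prescribed zeros and poles on the divisorial components, which assemble into a relative stable map to an expansion by \cite[Lemma~5.1.12]{GathmannThesis}, giving a lift to Li's space $\mathsf{Li}_\Lambda(X|D)$; the surjectivity of Kim's space over Li's space (the difference being saturation) then produces the logarithmic lift. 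Your tropical route is viable and arguably more self-contained once the matching step is supplied, but as written it proves only that the combinatorial type is smoothable, not that $f$ lies in the image.
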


\begin{proof} First assume that $D$ is smooth. Fix a stable map $f \colon C \to X$ which admits a logarithmic enhancement. Its tropicalisation satisfies the balancing condition, and we will use this to deduce Gathmann's condition.

Consider the identity piecewise linear function on $\Sigma=\RR_{\geq 0}$. The associated line bundle is $\OO_X(D)$. On the other hand, this piecewise linear function pulls back to the piecewise linear function on $\sqC$ with slopes along the edges determined by the tropical map $\f \colon \sqC \to \Sigma$. By the balancing condition these two line bundles have the same multi-degree. Summing the degrees over all irreducible components of a given $C_0 \subseteq f^{-1}(D)$, we obtain Gathmann's condition.

For the converse, consider the following commuting diagram of moduli spaces
\bcd
& \mathsf{Kim}_\Lambda(X|D) \ar[rd] \ar[ld] & \\
\mathsf{Log}_\Lambda(X|D) \ar[rd] & & \mathsf{Li}_\Lambda(X|D) \ar[ld]\\
& \Mup_\Lambda(X)
\ecd
where $\mathsf{Li}_\Lambda(X|D)$ is Li's space of relative stable maps to expansions \cite{Li1,Li2}, and $\mathsf{Kim}_\Lambda(X|D)$ is Kim's space of logarithmic stable maps to expansions \cite{KimLog}.

Fix a stable map $f \colon C \to X$ satisfying Gathmann's condition. This guarantees the existence of a meromorphic section of $f^\star \OO_X(D)$ on each irreducible component of $f^{-1}(D)$ with specified zero and pole locus. These can be assembled to produce a relative stable map to the expanded target: see \cite[Lemma~5.1.12]{GathmannThesis}. It follows that our given stable map admits a lift to $\mathsf{Li}_\Lambda(X|D)$.

On the other hand, it is a direct consequence of Li's original construction that the morphism $\mathsf{Kim}_\Lambda(X|D) \to \mathsf{Li}_\Lambda(X|D)$ is surjective, the difference being given by saturation: see \cite[Lemma~4.2.2]{AbramovichMarcusWise} and \cite[Section~6]{GrossSiebertLog}. It follows that the given stable map admits a lift to $\mathsf{Kim}_\Lambda(X|D)$ and hence, by commutativity of the diagram, lies in the image of $\Log_\Lambda(X|D) \to \Mup_\Lambda(X)$.

This completes the proof in the smooth divisor case. The general case follows from the definition \eqref{eqn: naive space definition} of the naive space, which implies that the image of $\Nup_\Lambda(X|D) \to \Mup_\Lambda(X)$ is the intersection of the images of $\Log_\Lambda(X|D_i) \to \Mup_\Lambda(X)$ for $1 \leq i \leq k$.
\end{proof}

\subsection{The data structure of combinatorial types}
A logarithmic map $\Ccal\to (X|D)$ over the logarithmic point $(\Speck,Q)$ produces a family of tropical maps $\sqC\to\Sigma$ over the dual cone $\tau$. The combinatorics of this tropical map enhances the dual graph of the source curve with data recording the vanishing orders of the divisor equations at generic points of components, and at marked and singular points. This data is referred to as the \textbf{combinatorial type} \cite[Section~2.5]{AbramovichChenGrossSiebertDegeneration}.

As we now explain, a naive map also has a combinatorial type, which can be packaged in the same data structure. Recall that the cones of a cone complex form a poset, where $\sigma_1\preceq\sigma_2$ if $\sigma_1$ is a face of $\sigma_2$.
\begin{definition} A \textbf{combinatorial type} of tropical stable map to $\Sigma$ consists of the following data:
\begin{enumerate}[(i)]
\item A finite graph $\Gamma$ consisting of vertices, finite edges and semi-infinite legs. Each leg $\ell$ in $L(\Gamma)$ carries a marking label $j_{\ell} \in \{1,\ldots,n\}$, and each vertex $v$ in $V(\Gamma)$ carries genus and degree labels
\[ g_v \in \NN, \quad \upbeta_v \in A_1(X).\]
\item Cones of $\Sigma$ associated to every vertex, edge and leg of $\Gamma$
\begin{align*}
v \rightsquigarrow \sigma_v, e \rightsquigarrow \sigma_e, \ell \rightsquigarrow \sigma_\ell
\end{align*}
such that if $v \preceq e$ then $\sigma_v \preceq \sigma_e$ for every vertex $v$ and adjacent edge or leg $e$.
\item For every oriented edge or leg $\vec{e}$ an associated slope $m_{\vec{e}} \in N_{\sigma_e}$ satisfying $m_{\vec{e}} = -m_{\cev{e}}$.
\end{enumerate}
\end{definition}

\subsection{Combinatorial type of a naive map} \label{sec: combinatorial type of naive map} We will associate a combinatorial type to each naive map. It will record the dual graph of the curve, its interaction with the target stratification, and the tangency orders at the special points.

\begin{remark}
The combinatorial type of a naive map fits into the same data structure as the combinatorial type of a naive map. However, there will exist naive types that do not arise from logarithmic maps, or equivalently, naive types that are not the underlying combinatorial type of a tropical map. The latter are said to be \textbf{smoothable}. The detection of smoothable types among naive types is the main issue at play in Section~\ref{sec: proof}.
\end{remark}

Fix a genus zero naive stable map $f \colon C \to (X|D)$.

\noindent \textbf{Step I: labeled graph.}  The graph $\Gamma$ is the dual graph of $C$, with vertices corresponding to irreducible components, finite edges to nodes and semi-infinite legs to markings. The leg and vertex labels are determined in the obvious way. Since we work in genus zero, all $g_v=0$.

\noindent \textbf{Step II: associated cones.} Let $C_v \subseteq C$ be an irreducible component. Let $X_v \subseteq X$ be the unique minimal stratum of $(X|D)$ through which the map $C_v \to X$ factors. This corresponds to a cone $\sigma_v \preceq \Sigma$ which is defined to be the cone associated to $v$. The cones $\sigma_e$ and $\sigma_\ell$ are defined similarly.

\noindent \textbf{Step III: slopes along legs.} The slopes along the legs are part of the input data, corresponding to the specified tangencies at the markings. In the notation of Section~\ref{sec: setup}, we have
\[ m_{\vec{\ell}} = \upalpha_{j(\ell)} \in \Sigma(\mathbb N)\]
where $\vec{\ell}$ is oriented to point away from its supporting vertex. Gathmann's condition implies that
\[ m_{\vec{\ell}} \in \sigma_{\ell}(\mathbb N) \subseteq N_{\sigma_{\ell}}.\]

\noindent \textbf{Step IV: slopes along edges.} The slopes $m_{\vec{e}}$ along the finite edges are determined by formally imposing the balancing condition at every vertex. Fix a divisor component $D_i$. For each vertex $v$ of $\Gamma$ the balancing equation gives
\begin{equation}\label{eqn: balancing} \sum_{p_j \in C_v} c_{ij} + \sum_{v\preceq e} (m_{\vec{e}})_i = D_i \cdot \upbeta_v \end{equation}
where every edge is oriented to point away from $v$. The $(m_{\vec{e}})_i$ are unknown integers to be solved for. The first term is a sum over markings $p_j$ in $C_v$, or equivalently legs $\ell_j$ containing $v$. The tangency order $c_{ij}$ is equal to the component of $\upalpha_j$ in the direction corresponding to $D_i$.

The system of equations~\eqref{eqn: balancing} above will appear repeatedly in this section, and will be referred to as the \textbf{balancing equations}. 

\begin{lemma} \label{lem: formal balancing unique solution} The system of balancing equations \eqref{eqn: balancing} has a unique solution in the $(m_{\vec{e}})_i$.
\end{lemma}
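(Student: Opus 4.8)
The plan is to treat the balancing equations \eqref{eqn: balancing} one divisor component at a time, since for distinct $i$ the systems involve disjoint sets of unknowns $(m_{\vec{e}})_i$ and are therefore completely independent. Fixing $D_i$, I would abbreviate $b_v := D_i \cdot \upbeta_v - \sum_{p_j \in C_v} c_{ij}$, so that the equation at $v$ reads $\sum_{v \preceq e}(m_{\vec{e}})_i = b_v$, with every edge oriented away from $v$. Because we work in genus zero and $C$ is connected, the dual graph $\Gamma$ is a tree. The system then asserts precisely that the oriented incidence matrix of this tree, applied to the vector of edge-slopes, recovers the prescribed vertex data $(b_v)_v$; uniqueness amounts to this matrix having trivial kernel on the edge variables, which for a tree ($E = V-1$) is equivalent to full column rank.

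First I would isolate the only constraint on solvability. Summing the equation over all vertices, each finite edge contributes its slope with opposite signs at its two endpoints (by the antisymmetry $m_{\vec{e}} = -m_{\cev{e}}$), so all such terms cancel and the total left-hand side vanishes; hence the system can be consistent only if $\sum_v b_v = 0$. I would then verify this identity directly:
\[ \sum_v b_v = D_i \cdot \Big(\sum_v \upbeta_v\Big) - \sum_{j} c_{ij} = D_i \cdot \upbeta - \sum_j c_{ij}, \]
using that each marking lies on exactly one irreducible component. This quantity is zero precisely by the global balancing condition \eqref{eqn: global balancing}. This is the single place where the standing hypothesis of Remark~\ref{rem: global-balancing} enters, and I expect it to be the conceptual crux: solvability is not automatic, but is forced by the numerical data being globally balanced.

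With consistency established, existence, uniqueness and integrality all follow simultaneously by stripping leaves off the tree. A leaf vertex $v$ meets a single edge $e$, so its balancing equation forces $(m_{\vec{e}})_i = b_v$, an integer, with no remaining freedom. Deleting $v$ and $e$ and transporting the now-determined contribution of $e$ to the adjacent vertex $w$ replaces $b_w$ by $b_w + b_v$ and leaves all other data unchanged; the resulting smaller tree has vertex data still summing to zero, so the inductive hypothesis applies. The base case is a single vertex, where the lone equation $b_v = 0$ holds by the consistency check and there is nothing to solve. Reassembling the determined leaf-slopes with the inductive solution yields the unique integral solution of \eqref{eqn: balancing}. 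Beyond careful bookkeeping of edge orientations the argument presents no genuine obstacle; its only substantive input is the global balancing condition guaranteeing that a solution exists at all.
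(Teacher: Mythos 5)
Your proof is correct and takes essentially the same route as the paper: you observe that summing the equations over all vertices forces the consistency condition $\sum_v b_v = 0$, which is exactly the global balancing constraint \eqref{eqn: global balancing}, and you then solve constructively by peeling the tree one leaf at a time, which is the same iterative tree-traversal the paper performs with a root vertex and level structure (your leaf-stripping and its ``transport $b_v$ to the neighbour'' step is precisely the paper's step of solving for the unique undetermined outgoing edge at each vertex, with the root-vertex verification playing the role of your base case). The only cosmetic difference is that the paper additionally records the dimension count ($\#V(\Gamma)-1$ independent equations in $\#E(\Gamma)=\#V(\Gamma)-1$ unknowns) before giving the constructive argument, whereas you fold existence, uniqueness and integrality into the induction directly.
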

\begin{proof} There are $\#V(\Gamma)$ balancing equations \eqref{eqn: balancing}, and a single linear dependency; summing all the equations, the $(m_{\vec{e}})_i$ cancel and we are left with the global balancing constraint
\begin{equation} \label{eqn: global balancing constraint in comb type section} \sum_{1 \leq j \leq n} c_{ij} = D_i \cdot \upbeta \end{equation}
which holds by assumption: see Remark~\ref{rem: global-balancing}. We obtain $\# V(\Gamma)-1$ linearly independent equations. Since $\Gamma$ has genus zero, $\# V(\Gamma)-1 = \#E(\Gamma)$ which coincides with the number of unknowns. Hence there is a unique solution for the $(m_{\vec{e}})_i$.

The uniqueness may be deduced constructively. Select a root vertex and orient the edges of $\Gamma$ to point towards the root. Place a level structure on $\Gamma$ which respects the orientation, i.e. an assignments of heights $h(v) \in \RR$ such that $h(v_1) < h(v_2)$ if there is an oriented edge $\vec{e}$ connecting $v_1$ to $v_2$. Iterate along $\Gamma$ towards the root, from the lowest level to the highest. At each step, the vertex encountered has several incoming edges whose $(m_{\vec{e}})_i$ have previously been determined, and a single outgoing edge whose $(m_{\vec{e}})_i$ is currently unknown. The latter is determined uniquely by solving the balancing equation at the current vertex. This process terminates at the root vertex, where the global balancing constraint \eqref{eqn: global balancing constraint in comb type section} ensures that the balancing equation at the root vertex is satisfied.
\end{proof}

By the above lemma, equation \eqref{eqn: balancing} has a solution for each $D_i$, and this produces integers $(m_{\vec{e}})_i$ for all $\vec{e}$ and $i$. We now use the assumption that $f \colon C\to (X|D)$ is a naive map to assemble these integers into slope vectors $m_{\vec{e}} \in N_{\sigma_e}$. 

Fix an oriented edge $\vec{e}$ and let $w_{i_1},\ldots,w_{i_c}$ be the primitive generators of the cone $\sigma_e$, corresponding to divisor components $D_{i_1},\ldots,D_{i_c}$. 

\begin{lemma} \label{lem: tangencies vanish in irrelevant directions} We have $(m_{\vec{e}})_i=0$ for $i \not\in \{i_1,\ldots,i_c\}$.\end{lemma}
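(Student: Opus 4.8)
The plan is to reduce to the rank-one situation, exploiting the defining property of a naive map: it carries a logarithmic lift to each smooth pair $(X|D_i)$. Fix an index $i \notin \{i_1,\ldots,i_c\}$. First I would tropicalise the logarithmic lift to $(X|D_i)$ supplied by the naive structure, obtaining a \emph{genuine} tropical map $\f_i \colon \sqC \to \Trop(X|D_i) = \RR_{\geq 0}$. Because $\f_i$ arises from an honest logarithmic map to a smooth pair, its slopes satisfy the balancing condition \eqref{eqn: balancing condition definition} on every irreducible component; specialising to the piecewise linear function dual to $D_i$ turns this into precisely the balancing equations \eqref{eqn: balancing} for the fixed index $i$, with the same graph $\Gamma$, the same degrees $\upbeta_v$, and the same marking tangencies $c_{ij}$.

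I would then invoke Lemma~\ref{lem: formal balancing unique solution}. Since the honest edge slopes of $\f_i$ solve the same system \eqref{eqn: balancing} that \emph{defines} the formal slopes $(m_{\vec{e}})_i$, and that system has a unique solution, the honest slope of $\f_i$ along $e$ must equal $(m_{\vec{e}})_i$. It therefore suffices to show that the honest rank-one slope vanishes.

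For this, recall that $\f_i$ sends each vertex $v$ into the cone attached to the minimal stratum of $(X|D_i)$ containing $C_v$; this cone is the origin exactly when $C_v \not\subseteq D_i$, in which case $\f_i(v) = 0$. The condition $i \notin \{i_1,\ldots,i_c\}$ says precisely that $D_i$ does not contain the minimal stratum through which the node $q_e$ maps, i.e.\ $q_e \notin D_i$; consequently neither adjacent component $C_{v_1}$ nor $C_{v_2}$ can lie in $D_i$, since either containment would force $q_e \in D_i$. Hence $\f_i(v_1) = \f_i(v_2) = 0$, and since $\f_i$ is affine-linear along $e$ with $\f_i(v_2) - \f_i(v_1) = \ell(e)\,(m_{\vec{e}})_i$ and $\ell(e) > 0$, we conclude $(m_{\vec{e}})_i = 0$.

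The main obstacle is careful bookkeeping rather than any deep difficulty. One must verify that the honest tropical slopes of the rank-one lift solve the formal system \eqref{eqn: balancing} verbatim, including sign conventions, so that the uniqueness lemma applies; and one must confirm the equivalence between the combinatorial condition $i \notin \{i_1,\ldots,i_c\}$ and the geometric statement $q_e \notin D_i$, which is built into the definition of $\sigma_e$ as the cone of the minimal stratum through the node. Once these two identifications are pinned down, linearity of $\f_i$ along the edge yields the vanishing immediately.
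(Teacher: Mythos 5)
Your proof is correct, and it rests on the same two load-bearing facts as the paper's — uniqueness of solutions to the balancing system (Lemma~\ref{lem: formal balancing unique solution}) and the observation that $i \notin \{i_1,\ldots,i_c\}$ forces $f(q_e) \notin D_i$ by the definition of $\sigma_e$ — but it implements the ``formal equals actual'' step by a different route. The paper factors the argument through Lemma~\ref{lem: geometric and formal tangencies coincide}: geometric tangency orders (scheme-theoretic vanishing orders of $f^\star \OO_X(D_i)$) satisfy the balancing equations by Gathmann's condition, hence agree with the formal slopes wherever defined; then $f(q_e) \notin D_i$ makes the geometric tangency at $q_e$ well defined and equal to zero. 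You instead tropicalise the rank-one logarithmic lift supplied by the naive structure, note that its honest edge slopes solve the system \eqref{eqn: balancing} verbatim because of the balancing condition \eqref{eqn: balancing condition definition} (with matching outgoing-slope orientation conventions, as you rightly flag), deduce the identification of honest with formal slopes from uniqueness, and read off the vanishing from the fact that both endpoints of $e$ map to the origin of $\RR_{\geq 0}$ while $\ell(e)>0$. This is a modest but genuine repackaging: it bypasses geometric tangency orders and Gathmann's condition entirely — the paper itself derives Gathmann's condition from \eqref{eqn: balancing condition definition} in the proof of Lemma~\ref{lem: Gathmann criterion}, so you are shortcutting one link in that chain — and it identifies formal with actual slopes along \emph{every} edge simultaneously, avoiding the paper's case distinction over which nodes have well-defined geometric tangency. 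The one piece of bookkeeping to make explicit is that the tropicalisation of the lift is a family over the cone $\tau_i$ rather than a single metric graph, so to speak of positive real edge lengths you should restrict to a point in the interior of $\tau_i$, where every $\ell(e)$ is positive since edge parameters are nonzero elements of $Q_i$; with that slice fixed, your argument is complete.
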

\noindent Once this is proved, we can define the slope along $\vec{e}$ as
\begin{equation} m_{\vec{e}} \colonequals  (m_{\vec{e}})_{i_1} w_{i_1} + \ldots + (m_{\vec{e}})_{i_c} w_{i_c} \in N_{\sigma_e}  \end{equation}
which completes the construction.

It remains to establish Lemma~\ref{lem: tangencies vanish in irrelevant directions}. This follows from the more general Lemma~\ref{lem: geometric and formal tangencies coincide} below, which asserts that the formal tangency orders $(m_{\vec{e}})_i$ constructed above coincide with the geometric tangency orders, provided the latter can be defined. This makes crucial use Gathmann's condition. 
\begin{definition} A node $q$ in $C$ has \textbf{well-defined geometric tangency} with respect to $D_i$ if there exists a neighbourhood $U$ of $q$ such that $f(U) \not\subseteq D_i$.\end{definition}
Equivalently, a point $q$ in $C$ has well-defined geometric tangency if $f(C_1) \not\subseteq D_i$ or $f(C_2) \not\subseteq D_i$ where $C_1,C_2 \subseteq C$ are the irreducible components adjacent to $q$.
\begin{definition}
For every node $q$ with well-defined geometric tangency, let $e$ in $E(\Gamma)$ be the corresponding edge and denote the \textbf{geometric tangency} by
\[ (m_{\vec{e}})_i^\prime \in \NN.\]
Here if $f(C_1) \not\subseteq D_i$ and $f(C_2) \subseteq D_i$ then we orient $e$ to point from $v_1$ to $v_2$. If neither component is mapped into $D_i$ then Gathmann's condition, combined with the global balancing constraint \eqref{eqn: global balancing}, implies that $f(q_e)$ does not lie in $D_i$ and so the geometric tangency is zero.
\end{definition}

\begin{lemma}\label{lem: geometric and formal tangencies coincide} If $q$ in $C$ is a node with well-defined geometric tangency and $e$ in $E(\Gamma)$ is the corresponding edge, then
\[ (m_{\vec{e}})_i = (m_{\vec{e}})^\prime_i.\]	
\end{lemma}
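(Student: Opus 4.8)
The plan is to prove the identity one divisor component at a time, reducing the full balancing system on $\Gamma$ to a smaller system on a contracted tree on which both the formal and the geometric tangencies can be exhibited as solutions. Fix $D_i$ and call a vertex $v$ \emph{off-divisor} if $f(C_v)\not\subseteq D_i$ and \emph{on-divisor} otherwise. The on-divisor vertices assemble into connected subcurves $C_0^{(1)},C_0^{(2)},\ldots$, which are the reductions of the positive-dimensional connected components of $f^{-1}(D_i)$. I would form the graph $\bar\Gamma$ obtained from $\Gamma$ by contracting each on-divisor subcurve to a single super-vertex. Since $\Gamma$ has genus zero it is a tree, hence so is $\bar\Gamma$, and the edges of $\bar\Gamma$ are exactly the edges of $\Gamma$ carrying well-defined geometric tangency: those joining two off-divisor vertices, and the boundary edges joining an off-divisor vertex to an on-divisor subcurve. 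The internal edges of each subcurve are precisely the ones that are contracted.

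On $\bar\Gamma$ I would write a \emph{reduced balancing system}, with one equation per vertex: the equation \eqref{eqn: balancing} at each off-divisor vertex, and at each super-vertex the aggregate obtained by summing \eqref{eqn: balancing} over the vertices of the corresponding subcurve. In the aggregate the slopes along the internal edges cancel in pairs, so the reduced system involves only the good edges. Exactly as in Lemma~\ref{lem: formal balancing unique solution}, summing all reduced equations recovers the global balancing constraint \eqref{eqn: global balancing} as the single linear dependency; since $\bar\Gamma$ is a tree the number of good edges is one less than the number of its vertices, so the reduced system has a unique solution.

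The crux is to realise both tangency assignments as solutions. The restriction of the formal tangencies $(m_{\vec{e}})_i$ to the good edges is a solution essentially by construction: at an off-divisor vertex the reduced equation is literally \eqref{eqn: balancing}, and at a super-vertex it is the sum of \eqref{eqn: balancing} over the subcurve. For the geometric tangencies, at a super-vertex the reduced equation, with the boundary tangencies reoriented to point out of $C_0$, is exactly case (2) of Gathmann's condition for $C_0$. At an off-divisor vertex $v$ I would use the projection formula to write $D_i\cdot\upbeta_v=\deg f^\star\OO_X(D_i)|_{C_v}$ and compute the right-hand side as the length of the effective Cartier divisor $f^{-1}(D_i)\cap C_v$ on $C_v$. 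Here Gathmann's condition is essential: case (1) forbids isolated intersections of $C_v$ with $D_i$ at unmarked points, so every point of $f^{-1}(D_i)\cap C_v$ is either a marked point contributing $c_{ij}$ or a node adjacent to an on-divisor subcurve contributing the geometric tangency $(m_{\vec{e}})^\prime_i$, while nodes between two off-divisor vertices contribute nothing since their image avoids $D_i$. Summing these local multiplicities yields precisely the reduced equation at $v$ for the geometric assignment.

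With both assignments solving the reduced system, uniqueness forces them to agree on every good edge, which is the claimed identity $(m_{\vec{e}})_i=(m_{\vec{e}})^\prime_i$. I expect the main obstacle to be the degree computation at off-divisor vertices: one must rule out stray intersection points of $C_v$ with $D_i$ and match each local intersection multiplicity with the corresponding contact order, which is the step that genuinely uses Gathmann's condition rather than mere balancing, and it also requires tracking edge orientations so that the signs in \eqref{eqn: balancing} align with the positive geometric tangencies. As an alternative I would keep in reserve the route through a logarithmic lift of $f$ to the smooth pair $(X|D_i)$, supplied by the naive structure: its tropicalisation assigns slopes to all edges, which satisfy \eqref{eqn: balancing} by the tropical balancing condition and hence equal the $(m_{\vec{e}})_i$, and which agree with $(m_{\vec{e}})^\prime_i$ on good edges by the standard compatibility between logarithmic slopes and orders of vanishing.
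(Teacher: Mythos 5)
Your proof is correct and is essentially the paper's argument: both exhibit the geometric tangencies as a second solution of the balancing system and conclude from the uniqueness count of Lemma~\ref{lem: formal balancing unique solution}, using Gathmann's condition (1) to account for the degree $D_i\cdot\upbeta_v$ at off-divisor vertices and condition (2) as the linear dependency attached to each maximal subcurve $C_0\subseteq f^{-1}(D_i)$. The only difference is packaging --- the paper fixes the geometric values on the well-defined edges and notes that the residual system of equations \eqref{eqn: balancing} splits into independent, uniquely solvable subsystems, one per subcurve $C_0$, whereas you contract each $C_0$ to a super-vertex and solve the equivalent reduced system on the quotient tree; your version in fact spells out the intersection-theoretic verification at off-divisor vertices that the paper leaves implicit.
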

\begin{proof} Consider the collection of geometric tangencies $(m_{\vec{e}})^\prime_i$ and formal tangencies $(m_{\vec{\ell}})_i$ for each leg $\ell$. Note that the latter satisfy the global balancing constraint \eqref{eqn: global balancing constraint in comb type section}. For all remaining edges, introduce unknowns $(m_{\vec{e}})^\prime_i$ and consider the resulting system of balancing equations, as in the preceding discussion.

The system coincides with the one in \eqref{eqn: balancing}, with a minor difference: the $(m_{\vec{e}})^\prime_i$ corresponding to nodes with well-defined tangency are specified at the start. As such there are fewer unknowns.

There are also more linear dependencies. In fact, the system of equations decomposes into a sum of systems of equations with independent sets of variables, one for each maximal subcurve $C_0 \subseteq f^{-1}(D_i)$. The same argument we have used previously, applied to each of these independent systems, guarantees a unique solution for the $(m_{\vec{e}})^\prime_i$. Thus we have two solutions for the balancing equations: the $(m_{\vec{e}})_i$ and the $(m_{\vec{e}})_i^\prime$. By uniqueness they coincide, so the formal tangency orders $(m_{\vec{e}})_i$ coincide with the geometric tangency orders $(m_{\vec{e}})_i^\prime$ whenever the latter are defined.\end{proof}

\begin{proof}[Proof of Lemma~\ref{lem: tangencies vanish in irrelevant directions}]
Fix an index $i \not\in \{ i_1,\ldots,i_c\}$. By the definition of $\sigma_e$ we have $f(q_e) \not\in D_i$. Let $C_1,C_2$ be the irreducible components of $C$ adjacent to $q_e$. Certainly either $f(C_1)$ or $f(C_2)$ is not contained in $D_i$, so $q_e$ has well-defined tangency. Since $f(q_e)$ does not lie in $D_i$ this geometric tangency is zero, so $(m_{\vec{e}})_i=0$ as claimed.
\end{proof}

This completes the construction of the combinatorial type associated to a naive stable map.

\begin{remark} \label{rmk: formal tangency equals geometric tangency} Given a naive stable map $f \colon C \to (X|D)$ the preceding construction assigns formal tangency vectors $m_{\vec{e}}$ along the finite edges of the dual graph. Lemma~\ref{lem: geometric and formal tangencies coincide} can be restated as follows.

Consider a vertex $v$ of $\Gamma$ and an edge or leg $e$ of $\Gamma$ containing $v$, and orient $e$ to point away from $v$. Suppose that $D_i$ is a divisor component which contains the stratum $X_e=X_{\sigma_e}$ but does not contain the larger stratum $X_v=X_{\sigma_v}$. Then the component $(m_{\vec{e}})_i \in \ZZ$ of the vector $m_{\vec{e}} \in N_{\sigma_e}$ must agree with the geometric tangency order of $f|_{C_v}$ to $D_i$ at the special point $q_e$ in $C_v$. The geometric tangency order is well-defined because $f(C_v) \not\subseteq D_i$. Note in particular that we must have $(m_{\vec{e}})_i > 0$.
\end{remark}

\begin{example} \label{example: non-realisable type} Let $X=\PP^2$ with $D=L_1+L_2$ the sum of two lines. Consider a degree~$4$ stable map with the following dual graph
\begin{center}
\begin{tikzpicture}

	\draw[fill=black] (2,0) circle[radius=3pt];
	\draw (2,0) node[right]{\small$v_3$};
	\draw[blue] (2,-0.05) node[below]{\small$0$};
	\draw[->] (2,0) -- (2,0.6);
	\draw (2,0.6) node[above]{\tiny$\begin{pmatrix} 3 \\ 3 \end{pmatrix}$};
	
	\draw[fill=black] (0,0.5) circle[radius=3pt];	
	\draw (0,0.5) node[left]{\small$v_1$};
	\draw[blue] (0,0.6) node[above]{\small$2$};
	\draw [->] (0,0.5) -- (0.6,1.2);
	\draw (0.5,1.3) node[right]{\tiny$\begin{pmatrix} 1 \\ 0 \\ \end{pmatrix}$};
	
	\draw (0,0.5) -- (2,0);
	\draw[->] (0,0.5) -- (1,0.25);
	\draw (1,0.25) node[above]{\small$e_1$};
	
	\draw[fill=black] (0,-0.5) circle[radius=3pt];
	\draw (0,-0.5) node[left]{\small$v_2$};
	\draw[blue] (0,-0.6) node[below]{\small$2$};
	\draw[->] (0,-0.5) -- (0.6,-1.2);
	\draw (0.5,-1.3) node[right]{\tiny$\begin{pmatrix} 0 \\ 1 \end{pmatrix}$};
	
	\draw (0,-0.5) -- (2,0);
	\draw[->] (0,-0.5) -- (1,-0.25);
	\draw (1,-0.25) node[below]{\small$e_2$};
\end{tikzpicture}
\end{center}
The degree of the stable map on each component is indicated in blue, and the tangency orders at the marked points are indicated by column vectors. Orienting the edges as above, the system of balancing equations at the vertices is
\begin{alignat*}{2}
v_1 \colon \, & && m_{\vec{e}_1} + \left(\begin{smallmatrix} 1 \\ 0 \end{smallmatrix}\right) = \left(\begin{smallmatrix} 2 \\ 2 \end{smallmatrix}\right) \\
v_2 \colon \, & && m_{\vec{e}_2} + \left(\begin{smallmatrix} 0 \\ 1 \end{smallmatrix}\right) = \left(\begin{smallmatrix} 2 \\ 2 \end{smallmatrix}\right) \\
v_3 \colon \, & -m_{\vec{e}_1} - && m_{\vec{e}_2} + \left(\begin{smallmatrix} 3 \\ 3 \end{smallmatrix}\right) = \left(\begin{smallmatrix} 0 \\ 0 \end{smallmatrix}\right)
\end{alignat*}
to which there is a unique solution, namely
\[ m_{\vec{e}_1} = \left(\begin{smallmatrix} 1 \\ 2 \end{smallmatrix}\right), \quad m_{\vec{e}_2} = \left(\begin{smallmatrix} 2 \\ 1 \end{smallmatrix}\right). \]
\end{example}

\subsection{Naive types vs. logarithmic types}\label{sec: realisability} We briefly address the combinatorial question of how the naive and logarithmic types differ. Although this discussion is not logically necessary, it provides context for the arguments of Section~\ref{sec: proof}. 

\begin{definition}\label{def: smoothable} A combinatorial type of tropical stable map is \textbf{smoothable} if there exist assignments of edge lengths metrising the graph $\Gamma$
\[ \ell_e \in \RR_{>0} \]
and a continuous map $\f \colon \Gamma \to \Sigma$ such that:
\begin{enumerate}[(i)]
\item every vertex, edge and leg $p$ of $\Gamma$ is mapped into the interior of the cone $\sigma_p$.
\item on every edge or leg $e$, the map $\f|_e \colon e \to \sigma_e$ is affine with slope equal to $m_{\vec{e}}$. 
\end{enumerate}\end{definition}

The positivity of the $\ell_e$ and the fact that faces of $\Gamma$ must map to the interiors of their cones is crucial. Without it, every combinatorial type is smoothable for trivial reasons. The following rephrases~\cite[Remark~1.21]{GrossSiebertLog} and the surrounding discussion. The proof is omitted.

\begin{proposition}
Let $f: C\to (X|D)$ be a logarithmic map over a geometric point. The combinatorial type of $f$ is smoothable. 
\end{proposition}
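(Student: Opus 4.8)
The plan is to produce the smoothing directly from the logarithmic map, by tropicalising and then slicing the resulting family of tropical maps over an interior point of the base cone. First I would tropicalise $f$ to obtain the morphism of cone complexes $\f \colon \sqC \to \Sigma$ over $\tau = \Hom(Q,\RR_{\geq 0})$, where $\sqC \to \tau$ is the tropicalisation of the source and its cones are indexed by the faces of the dual graph $\Gamma$. By construction the combinatorial type of $f$ is read off from this tropical map: the cones $\sigma_v$, $\sigma_e$, $\sigma_\ell$ are the minimal cones of $\Sigma$ containing the images of the corresponding cells of $\sqC$, and (after invoking balancing, see below) the edge slopes $m_{\vec{e}}$ are the genuine slopes of $\f$. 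Thus the tropical map already encodes all of the data to be exhibited; what remains is to extract a single metric realisation from the family.

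To do this I would choose a point $p$ in the relative interior of $\tau$. Since $p$ is strictly positive on $Q \setminus \{0\}$ and every edge carries a length $\ell(e) \in Q \setminus \{0\}$ by \eqref{eqn: length assignment}, the induced lengths $\ell_e \colonequals p(\ell(e))$ are all strictly positive, so they metrise $\Gamma$. Restricting $\f$ to the fibre over $p$ then yields a continuous, piecewise linear map $\Gamma \to \Sigma$ which is affine on each edge and leg. For condition (ii) of Definition~\ref{def: smoothable} I would note that the slopes of a genuine tropicalisation satisfy the balancing condition, so by Lemma~\ref{lem: formal balancing unique solution} they agree with the formal slopes $m_{\vec{e}}$ of the combinatorial type; hence $\f|_e$ has the prescribed slope.

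The step requiring the most care is condition (i): that every vertex, edge and leg maps into the \emph{relative interior} of its associated cone. This is precisely where interiority of $p$ enters. For a morphism of cone complexes, the image of the relative interior of a cell lies in the relative interior of the minimal cone containing its image --- if a relative interior point landed on a proper face $\sigma' \prec \sigma$, then since faces are extreme the whole image cone would lie in $\sigma'$, contradicting minimality. Applying this to the vertex cell, whose relative interior is $\op{int}\tau \ni p$, shows the vertex maps into $\op{int}\sigma_v$; the same argument applied to the edge and leg cells --- whose fibres over $p$ are non-degenerate once $\ell_e > 0$ --- yields the corresponding statements for edges and legs. This matches the interiority conventions baked into Steps II--IV of the combinatorial type, and with (ii) already in hand it completes the verification of smoothability. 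The only degenerate point is $Q = 0$, where $\tau$ is a single point and $\Gamma$ carries no finite edges; there one simply places the unique vertex at its (interior) image and extends affinely along the legs.
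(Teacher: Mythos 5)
Your proof is correct and is exactly the argument the paper has in mind: the paper omits the proof of this proposition, deferring to \cite[Remark~1.21]{GrossSiebertLog} and the surrounding discussion, where the combinatorial type of a logarithmic map is realised by tropicalising and slicing the resulting family over an interior point of the dual cone $\tau$, precisely as you do. Your checks --- positivity of the induced edge lengths, interiority via extremality of faces (together with the locality of the maps on ghost sheaves, which identifies the minimal cone containing the tropical image of each cell with the cone $\sigma_p$ of the type), and the identification of slopes via the uniqueness statement of Lemma~\ref{lem: formal balancing unique solution} --- supply exactly the details the paper leaves implicit.
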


The next example shows that naive combinatorial types are not always smoothable.

\begin{example}
Consider Example~\ref{example: non-realisable type} above, and suppose that the associated cones are
\[ \sigma_{v_1}=\sigma_{v_2}=0, \quad \sigma_{v_3}=\RR^2_{\geq 0}.\]
There is then no smoothing, because a distance-preserving map $\f$ would impose the following simultaneous equations on the edge lengths
\begin{align*} \ell(e_1) & = 2 \ell(e_2) \\
2 \ell(e_1) & = \ell(e_2)	
\end{align*}
which are only satisfied when $\ell(e_1)=\ell(e_2)=0$.
\end{example}

The following purely combinatorial question arises.
\begin{question}
Which combinatorial types of naive stable maps are smoothable?
\end{question}

In fact, the answer to this combinatorial question dictates the entire comparison between logarithmic and orbifold Gromov--Witten theory. In Section~\ref{sec: proof} we prove that after performing suitably many blowups of the target, every combinatorial type of naive stable map is smoothable.

\subsection{Naive maps and target blowups} \label{sec: naive spaces under blowup} 
We examine the behaviour of naive maps under target blowups and describe the associated transformation on combinatorial types.

 A \textbf{simple blowup} of a simple normal crossings pair is the blowup along a stratum. The reduced preimage of the boundary divisor is again simple normal crossings. We examine an iteration of such simple blowups
\[ \pi \colon (X^\dag|D^\dag) \to (X|D). \]
It is often useful to note that the strata of a simple normal crossings pair are regularly embedded, with canonically split normal bundles, and that $\pi$ is a local complete intersection morphism. Since $\pi$ is a logarithmic modification, it corresponds to a subdivision of cone complexes
\[ \Sigma^\dag \to \Sigma.\]

Consider numerical data $\Lambda$ for a moduli space of stable maps to $(X|D)$. The data $\Lambda$ lifts uniquely to numerical data for maps to $(X^\dag|D^\dag)$, as follows. The genus and marking set are unchanged. For $1 \leq i \leq n$ we use the identification of integral lattice points of the tropicalisation to lift the tangency data at the marked point $p_i$
\[ \upalpha_i \in \Sigma(\mathbb N) = \Sigma^\dag(\mathbb N).\]
It remains to lift the curve curve class. We show how to do this in the case where $\pi$ is a single blowup; the general case follows by induction. Suppose then that we have
\[ \pi \colon X^\dag = \text{Bl}_Z X \to X \]
a blowup \lu{along} $Z \subseteq X$ with exceptional divisor $E \subseteq X^\dag$. Consider the projection
\[ \mathrm{p}_E \colon \Sigma^\dag \to \RR_{\geq 0} \]
corresponding to the forgetful logarithmic morphism $(X^\dag|D^\dag) \to (X^\dag|E)$. Alternatively, this is the piecewise linear function corresponding to the Cartier divisor $E$. Let
\[ d_E \colonequals \sum_{i=1}^n \mathrm{p}_E(\upalpha_i) \in \NN.\]
Let $L \in A_1(X^\dag)$ be the class of a line in a fibre of the projective bundle $E \to Z$, and note that $E \cdot L=-1$. We define
\[ \upbeta^\dag \colonequals \pi^\star \upbeta - d_E L.\]
The global balancing constraint \eqref{eqn: global balancing} is clearly satisfied, and $\pi_\star \upbeta^\dag = \upbeta$. These define the lifted numerical data $\Lambda$ for $(X^\dag|D^\dag)$.

\begin{remark} Suppose that each $\upalpha_i$ in $\Sigma(\mathbb N)$ lies on a ray of $\Sigma$. Geometrically, this means that each marking carries tangency to at most one irreducible component of the divisor, and we refer to this as the \textbf{disjoint case}. Then we have $\upbeta^\dag = \pi^\star \upbeta$. We may always reduce to the disjoint case by performing finitely many \red{initial} blowups of the target.\end{remark}

The next task is to explain how combinatorial types behave \red{under blowups.} A geometric point $s$ of $\Nup_\Lambda(X|D)$ has a combinatorial type. By the discussion of Section~\ref{sec: combinatorial type of naive map} this type is determined by (i) the numerical data $\Lambda$ and (ii) the image of $s$ in $\Mup_\Lambda(X)$. 

\begin{proposition}\label{prop: naive-under-blowup}
Consider the diagram of mapping spaces with compatible numerical data
\[
\begin{tikzcd}
\Nup_\Lambda(X^\dagger|D^\dagger)\arrow[swap]{d}{\theta^\dagger}&\Nup_\Lambda(X|D)\arrow{d}{\theta}\\
\Mup_\Lambda(X^\dagger)\arrow[swap]{r}{\varphi} & \Mup_\Lambda(X). 
\end{tikzcd}
\]
Let $s$ be a geometric point of $\Nup_\Lambda(X^\dagger|D^\dagger)$. Then the set
\[
\theta^{-1}(\varphi\circ\theta^\dagger(s))
\]
is nonempty, finite, and every element has the same combinatorial type. 
\end{proposition}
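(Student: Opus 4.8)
The plan is to reduce by induction to a single simple blowup $\pi\colon X^\dagger=\Bl_Z X\to X$ with exceptional divisor $E$ and to treat the three assertions separately. Write $f^\dagger=\theta^\dagger(s)\colon C^\dagger\to X^\dagger$ for the stable map underlying $s$, and $f=\varphi(f^\dagger)\colon C\to X$ for its image, obtained by composing with $\pi$ and stabilising. Since $s$ is a naive map, Lemma~\ref{lem: Gathmann criterion} guarantees that $f^\dagger$ satisfies Gathmann's condition with respect to every component of $D^\dagger$, in particular with respect to the strict transform $\widetilde{D}_i$ of each $D_i$ and with respect to $E$.

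Two of the three assertions are immediate. Every point of $\theta^{-1}(f)$ is a naive map to $(X|D)$ with the fixed numerical data $\Lambda$ and the fixed image $f$ in $\Mup_\Lambda(X)$; by the determination of the combinatorial type recalled before the statement (Section~\ref{sec: combinatorial type of naive map}), all such points carry one and the same combinatorial type. Finiteness follows because $\theta$ is a finite morphism (Section~\ref{sec: Gathmann criterion}), so its fibres are finite.

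The substance is nonemptiness. By Lemma~\ref{lem: Gathmann criterion} it suffices to check that $f$ satisfies Gathmann's condition with respect to every $D_i$, for then $f$ lies in the image of $\theta$. I would deduce this from the condition upstairs using the compatibility of the lifted numerical data: the marking tangencies lift identically under $\Sigma(\N)=\Sigma^\dagger(\N)$, one has $\pi^\star D_i=\widetilde{D}_i+a_iE$ with $a_i\in\{0,1\}$ according to whether $Z\subseteq D_i$, and $\upbeta^\dagger$ was defined precisely so that $\pi_\star\upbeta^\dagger=\upbeta$ and $\pi^\star D_i\cdot\upbeta^\dagger=D_i\cdot\upbeta$. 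With these identities the Gathmann balance of a subcurve $C_0\subseteq f^{-1}(D_i)$ is to be matched against that of its preimage in $(f^\dagger)^{-1}(\widetilde{D}_i)$ via the projection formula.

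I expect this matching to be the main obstacle, since stabilisation contracts the components of $C^\dagger$ mapping into $E$ and so alters the dual graph. Rather than compare subcurves directly, I would route the argument through the balancing equations~\eqref{eqn: balancing}. By Lemma~\ref{lem: geometric and formal tangencies coincide} the formal tangencies of $f^\dagger$ coincide with its geometric ones, so the combinatorial type of $f^\dagger$ solves the balancing system over $\Sigma^\dagger$; pushing this solution through the subdivision $\Sigma^\dagger\to\Sigma$ and contracting the edges collapsed by stabilisation yields a solution of~\eqref{eqn: balancing} for the downstairs data, where the identity $\Sigma(\N)=\Sigma^\dagger(\N)$ and the definition of $\upbeta^\dagger$ ensure that no balance is violated. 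The equivalence between balancing and Gathmann's condition — exactly the implication proved in the first part of the proof of Lemma~\ref{lem: Gathmann criterion} — then yields Gathmann's condition for $f$, and a final application of Lemma~\ref{lem: Gathmann criterion} produces a naive lift, establishing nonemptiness.
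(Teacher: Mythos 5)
Your skeleton matches the paper's up to the last step: the same-type claim follows because the combinatorial type is determined by $\Lambda$ and the image in $\Mup_\Lambda(X)$, finiteness is immediate from finiteness of $\theta$, and nonemptiness reduces to a single blowup with the case split $\pi^\star D_i = \tilde{D}_i$ versus $\pi^\star D_i = \tilde{D}_i + E$. Routing nonemptiness through the converse of Lemma~\ref{lem: Gathmann criterion} (Gathmann's condition implies a naive lift) is also a legitimate alternative to the paper, which instead lifts the generalised Cartier divisor $(f^\star\OO_X(D_i),s_{D_i})$ directly: in the case $\pi^\star D_i = \tilde{D}_i + E$ it invokes Lemma~\ref{lem: tensor-business} (a tensor product of two log-liftable generalised Cartier divisors is log-liftable, with added contact orders) and then uses that stabilisation of a logarithmic morphism remains logarithmic.

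The genuine gap is your final implication. You propose to push the balancing solution of $f^\dagger$ through the subdivision, contract the stabilised edges, obtain a solution of~\eqref{eqn: balancing} downstairs, and conclude Gathmann's condition for $f$ via ``the implication proved in the first part of the proof of Lemma~\ref{lem: Gathmann criterion}.'' This is backwards. By Lemma~\ref{lem: formal balancing unique solution}, the formal system~\eqref{eqn: balancing} has a (unique) solution for \emph{every} stable map whose discrete data satisfies global balancing (Remark~\ref{rem: global-balancing}), naive or not; exhibiting such a solution is therefore vacuous, and if formal balancing implied Gathmann's condition then every stable map in $\Mup_\Lambda(X)$ would admit a naive lift, which is false. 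The implication actually used in the first half of Lemma~\ref{lem: Gathmann criterion}'s proof starts from an honest logarithmic enhancement, whose structure encodes the isomorphism $\OO_{\Ccal}(\f^\star\upalpha)\cong f^\star\OO_X(\upalpha)$ --- the geometric balancing condition~\eqref{eqn: balancing condition definition} --- and it is this geometric input, not the numerics, that yields Gathmann. What your argument never establishes is precisely the geometric content of Gathmann's condition for $f$ in the case $\pi^\star D_i=\tilde{D}_i+E$: that isolated zeros of the product section $s_{\tilde{D}_i}\otimes s_E$ occur only at markings with total length equal to the prescribed $c_{ij}$ (condition (1)); that the degree identity of condition (2) holds for connected components of $(f^\dagger)^{-1}(\tilde{D}_i\cup E)$, which may merge several upstairs components of $(f^\dagger)^{-1}(\tilde{D}_i)$ and $(f^\dagger)^{-1}(E)$, with cancellation at the internal joining nodes; and that all of this survives contraction of unstable components. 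A direct verification by summing Gathmann's identities over the merged pieces is plausible but must be carried out --- this bookkeeping is exactly what the paper's Lemma~\ref{lem: tensor-business} replaces, and that lemma has genuine content (a rank-one tropical smoothing argument) that cannot be recovered from the balancing equations alone.
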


The following lemma will be key to the proof. Let $C$ be an $n$-pointed curve, and fix tangency orders $c_1,\ldots, c_n$ in $\NN$ for a map to a smooth pair. A morphism $f: C\to \Acal$ \textbf{admits a logarithmic lift} if $f$ is the underlying map of a logarithmic map with these tangency orders. 

\begin{lemma}\label{lem: tensor-business}
Let $C$ be an $n$-marked nodal curve over $\Speck$, and let
\[
f_1,f_2:C\to \Acal
\]
be two morphisms with $(L_1,s_1),(L_2,s_2)$ the associated generalised Cartier divisors. Let 
\[
h:C\to \Acal
\]
be the morphism determined by $(L_1\otimes L_2,s_1\otimes s_2)$. Suppose $f_1$ and $f_2$ admit logarithmic lifts, where $f_i$ has contact order $c_{ij}$ at the marked point $p_j$. Then $h$ also admits a logarithmic lift with contact order $c_{1j}+c_{2j}$ at $p_j$. 
\end{lemma}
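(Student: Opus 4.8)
The plan is to reduce the statement to the \emph{additivity of Gathmann's condition} under tensor product, and then to verify that additivity by a single degree count. Since $\Acal$ carries only the smooth boundary $\Dcal$, the smooth-divisor case of Lemma~\ref{lem: Gathmann criterion} applies: a morphism $f\colon C\to\Acal$ classifying $(L,s)$ admits a logarithmic lift with prescribed contact orders if and only if $(L,s)$ satisfies Gathmann's condition with those orders. The cited argument is phrased for a geometric target, but it only uses the section $s$ of $L=f^\star\Ocal_{\Acal}(\Dcal)$ and the expanded-degeneration machinery, so it applies verbatim to the universal target; alternatively one passes through the cartesian square~\eqref{eqn: Cartesian square for smooth divisor log space and universal target}. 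It therefore suffices to show: if $(L_1,s_1)$ and $(L_2,s_2)$ satisfy Gathmann's condition with orders $c_{1j}$ and $c_{2j}$, then $(L_1\otimes L_2,\,s_1\otimes s_2)$ satisfies it with orders $c_{1j}+c_{2j}$.

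The main tool I would use is a conservation law extracted from balancing. A logarithmic lift of $f_i$ tropicalises to a balanced tropical map, so at every vertex $v$ the balancing equation~\eqref{eqn: balancing} holds for the integer slopes $m^{(i)}_{\vec e}\in\Z=N_{\sigma_e}$ of the tropicalisation of $f_i$. Summing~\eqref{eqn: balancing} over all components of an arbitrary subcurve $Y\subseteq C$ and cancelling the slopes along interior edges in pairs, I obtain for each $i$
\[ \deg L_i|_Y = \sum_{p_j\in Y} c_{ij} + \sum_{e} m^{(i)}_{\vec e,\,\mathrm{out}}, \]
where the last sum runs over the boundary nodes of $Y$ and each edge is oriented out of $Y$. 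By Remark~\ref{rmk: formal tangency equals geometric tangency}, whenever the component of $C$ lying outside $Y$ at such a node is not contracted into the divisor, the outgoing slope equals minus the geometric vanishing order of $s_i$ along that outside component. The crucial point is that the slopes here are plain integers, so no compatibility between the logarithmic bases of the two lifts, and in particular no common refinement or matching of edge lengths, is required.

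With this in hand the additivity is immediate. For the zero-dimensional clause of Gathmann's condition, any isolated point of $V(s_1\otimes s_2)=V(s_1)\cup V(s_2)$ is isolated in each $V(s_i)$, hence by Gathmann's condition for $f_i$ is a marking $p_j$ at which $s_i$ vanishes to order $c_{ij}$; the product then vanishes to order $c_{1j}+c_{2j}$, and no new isolated zeros appear at nodes. For the one-dimensional clause, fix a connected component $C_0$ of $V(s_1\otimes s_2)$. Each adjacent outside component $C_\rho$ has $s_1\otimes s_2\not\equiv 0$, hence $s_1\not\equiv 0$ and $s_2\not\equiv 0$ there, so both geometric vanishing orders $m^{(1)}_\rho,m^{(2)}_\rho$ are defined and the contact order of $s_1\otimes s_2$ at the node $q_\rho$ is $m^{(1)}_\rho+m^{(2)}_\rho$. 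Applying the conservation law with $Y=C_0$ to each $i$ and adding gives
\[ \Big(\sum_{p_j\in C_0}(c_{1j}+c_{2j})\Big)-\sum_\rho\big(m^{(1)}_\rho+m^{(2)}_\rho\big)=\deg L_1|_{C_0}+\deg L_2|_{C_0}=\deg(L_1\otimes L_2)|_{C_0}, \]
which is exactly Gathmann's equation for $(L_1\otimes L_2,\,s_1\otimes s_2)$ on $C_0$. A final application of Lemma~\ref{lem: Gathmann criterion} then produces the desired logarithmic lift.

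I expect the only genuine friction to lie in the bookkeeping of the one-dimensional clause: correctly matching the outgoing tropical slopes of the two lifts along the boundary nodes of $C_0$ with the geometric contact orders of the product section, including the degenerate case $C_0=C$ (the entire curve mapping into the divisor), where the conservation law collapses to the global balancing constraint~\eqref{eqn: global balancing} and the claim becomes transparent. A secondary point to check is that the smooth-divisor case of Lemma~\ref{lem: Gathmann criterion} is genuinely available for the universal target $\Acal$; this is precisely where the absence of any tropical obstruction for a single smooth divisor enters, and it is what makes the slope-level additivity suffice with no analysis of metrisations or of a common logarithmic base for the two lifts.
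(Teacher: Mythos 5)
Your additivity computation is sound and is genuinely different bookkeeping from the paper's: summing the balancing equations \eqref{eqn: balancing} over the subcurve $C_0$, cancelling interior slopes in pairs, identifying the boundary slopes with geometric vanishing orders via Remark~\ref{rmk: formal tangency equals geometric tangency}, and using that orders of vanishing add under tensor product does correctly show that Gathmann's condition with orders $c_{1j}+c_{2j}$ holds for $(L_1\otimes L_2, s_1\otimes s_2)$, including the case $C_0=C$ and the exclusion of isolated zeros at nodes. The paper, by contrast, never passes through Gathmann's numerical criterion at all: it directly \emph{defines} the combinatorial type of $h$ by summing the two types (a cone is $0$ iff it is $0$ in both, slopes add), smooths this rank-one type, produces a logarithmic enhancement as in Sections~\ref{sec: moduli of log curves}--\ref{sec: log maps to Artin fans}, and then verifies that the resulting generalised Cartier divisor is isomorphic to $(L_1\otimes L_2, s_1\otimes s_2)$.

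The genuine gap is the step you flag yourself but do not fill: the ``if'' direction of Lemma~\ref{lem: Gathmann criterion} for the \emph{universal} target $\Acal$, applied to an arbitrary generalised Cartier divisor $(L,s)$ on a possibly unstable nodal curve. Neither of your proposed justifications secures it. The paper's proof of that direction runs through Li's and Kim's moduli of relative stable maps to expansions and Gathmann's meromorphic-section lemma, all of which are set up for a \emph{projective} pair and \emph{stable} maps; there is no verbatim transfer to $\Acal$ and prestable curves (and in the intended application, Proposition~\ref{prop: naive-under-blowup}, the lemma is deliberately invoked \emph{before} stabilising). The cartesian square \eqref{eqn: Cartesian square for smooth divisor log space and universal target} transports the criterion only to those $(L,s)$ arising as $f^\star(\OO_X(D_i),s_{D_i})$ for a stable map $f$ to a geometric smooth pair; an arbitrary $(L,s)$ --- with $L$ of negative degree on some components, $s$ identically zero on subcurves, $C$ unstable --- need not arise this way, and the natural attempt to manufacture such a pair (e.g.\ $\PP_C(L\oplus\OO_C)$ with its infinity section) fails because $C$ is nodal, so this is not a smooth pair. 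Closing this hole in genus zero amounts to redoing the paper's own argument: build the rank-one combinatorial type from the balancing data, smooth it using the tree structure and the positivity of the attachment slopes, enhance to a logarithmic map, and then --- a step invisible from the Gathmann numerics --- check that the enhancement recovers the given pair $(L,s)$ up to isomorphism, which uses that on a genus-zero curve a line bundle is determined by its multidegree and that sections with matching divisors can be glued by rescaling component-by-component along the tree. So your proposal is a correct strategy \emph{conditional} on a universal-target, prestable Gathmann criterion that the paper does not supply, and whose proof is essentially the paper's proof of this very lemma.
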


\begin{proof}
Since $f_1$ and $f_2$ admit logarithmic enhancements we may consider the combinatorial types of tropical maps to $\RR_{\geq 0}$ of these enhancements. We first predict the combinatorial type of $h$. The graph $\Gamma$ is the dual graph of $C$. If $p$ is a vertex, edge, or leg of $\Gamma$ define the target cone by the following rule: $\sigma_p=0$ if and only if $p$ is assigned the cone $0$ in the types of both $f_1$ and $f_2$. If $\vec e$ is a directed edge of $\Gamma$ we let $m_{\vec e}^i$ denote the slope along this directed edge in the combinatorial type of $f_i$, viewed as an element in $N_{\sigma_e}$. We then define
\[ m_{\vec{e}} \colonequals m_{\vec{e}}^1 + m_{\vec{e}}^2 \in N_{\sigma_e}.\]
The parallel definition is taken at the legs, as demanded by the lemma. This gives a well-defined combinatorial type for a map to $\RR_{\geq 0}$. 

Since all types of tropical maps to $\RR_{\geq 0}$ are smoothable, we can find (i) a logarithmic structure on $\Speck$ with monoid $Q$ and dual cone $\tau$, (ii) an enhancement of $C$ to a logarithmic curve $\Ccal$, and (iii) a map from the associated family of tropical curves
\[
\begin{tikzcd}
\sqC\arrow{d}\arrow{r} & \RR_{\geq 0}\\
\tau.
\end{tikzcd}
\]
As discussed in Section~\ref{sec: log maps to Artin fans} we deduce that there is a logarithmic morphism
\[
\widetilde h: \Ccal \to \Acal.
\]
Now, the map $\widetilde h$ satisfies all the demands placed on $h$ in the statement of the lemma, so we need only show that $h$ is isomorphic to the underlying morphism of $\widetilde h$. Each of $h$ and $\widetilde h$ produces a generalised Cartier divisor, and we must show that these are isomorphic.

At each vertex $v$ of $\Gamma$, the line bundle associated to  $f_i$ is given by a divisor, specifically the sum of distinguished points of $C_v$ weighted by the slopes of the associated piecewise linear function: see e.g. \cite[Proposition~2.4.1]{RanganathanSantosParkerWise1}. The line bundle associated to their tensor product is the line bundle associated to the sum of these divisors. It follows immediately from the definition that the line bundle associated to $h$ and $\tilde h$ are isomorphic. It is also easy to see that the isomorphism preserves the sections. The proof is thus complete.
\end{proof}

\subsubsection*{Proof of Proposition~\ref{prop: naive-under-blowup}}

It follows from Section~\ref{sec: combinatorial type of naive map} that the fibres of both vertical morphisms in the proposition parameterise maps with the same combinatorial type. The finiteness follows from~\cite[Corollary~1.2]{WiseUniqueness}. The main work is to demonstrate nonemptiness.

Consider a single blowup along a smooth stratum
\[
(X^\dagger|D^\dagger)\to (X|D).
\]
Let $D_1,\ldots,D_k$ be the divisor components of $D$. Let $\tilde D_1,\ldots, \tilde D_k$ denote their strict transforms and $E$ the exceptional. The pullback of $D_i$ is either $\tilde D_i$ or $\tilde D_i+E$. 

Let $C\to (X^\dagger|D^\dagger)$ be a naive stable map and consider the map
\[
f: C\to X^\dagger\to X
\]
obtained by composition, without stabilising. We verify that for each $D_i$, $f$ admits a logarithmic lift $f_i \colon \Ccal_i \to (X|D_i)$. By~\eqref{eqn: diagram for naive space and universal naive space} we may equivalently check that the generalised Cartier divisor 
\[
(f^\star\mathcal O_X(D_i),s_{D_i})
\]
admits a logarithmic lift, when viewed as a map to $\Acal$. If $D_i$ pulls back to $\tilde D_i$ then there is nothing to prove. If it pulls back to $\tilde{D}_i+E$ then the result follows from Lemma~\ref{lem: tensor-business} \red{by taking $L_1 = f^\star \OO_X(\tilde{D}_i)$ and $L_2 = f^\star \OO_X(E)$ so that $L_1 \otimes L_2 = f^\star \OO_X(D_i)$.} Since the stabilisation of a logarithmic stable map carries a natural logarithmic structure \red{\cite[Appendix B]{AbramovichMarcusWise}} the claim follows. 
\qed

\begin{remark} Proposition~\ref{prop: naive-under-blowup} gives a multi-valued map between naive spaces, which is a weak form of functoriality. In case it may be useful in future work, we record here that the proof shows that the arrow $\varphi$ restricts to a morphism between the loci of stable maps that admit a naive lift. In fact, if we use spaces of fine but not-necessarily saturated logarithmic maps, as \red{introduced by Chen \cite[Lemma 3.7.4]{ChenLog} and advocated by} Wise \cite{WiseUniqueness}, then the multi-valued map above becomes a morphism. 
\end{remark}

\begin{construction}[Pushforward of types] \label{construction: pushforward of types}
As a consequence of the above proposition, although we have not produced a morphism between naive spaces, there is a morphism between their images in the mapping spaces. Let
\[
\pi: (X^\dagger|D^\dagger)\to (X|D)
\]
be an iterated blowup along strata. Given a combinatorial type for a naive map to $(X^\dagger|D^\dagger)$, the induced type of the map to $(X|D)$ is obtained as follows. The blowup is given by a subdivision
\[ \p \colon \Sigma^\dag \to \Sigma.\]
The labeled source graph $\Gamma$ is replaced by its stabilisation, obtained by deleting bivalent or univalent vertices whose curve classes are exceptional. For every vertex, edge or leg $p$ of $\Gamma$ we replace the cone $\sigma_p \preceq \Sigma^\dag$ by the minimal cone $\overline{\sigma}_p \preceq \Sigma$ containing the image of $\sigma_p$. Finally for every oriented edge or leg $\vec{e}$ we replace $m_{\vec{e}}$ by its image
\[ \p(m_{\vec{e}}) \in N_{\overline{\sigma}_e}.\]
\end{construction}

\section{Slope-sensitivity} \label{sec: slope subdivision}

\noindent The present section contains the main polyhedral arguments in the paper. We introduce the notion of a slope-sensitive subdivision. The term ``slope'' here refers to the edge directions in the combinatorial types of naive stable maps. Once the numerical data is fixed, these form a finite set of directions in the cone complex $\Sigma$. After subdividing $\Sigma$, we show that the combinatorial types of naive maps become tightly constrained. In Section~\ref{sec: proof} we use this to establish the main theorem. 

\subsection{Rank two} The \textbf{rank} of a normal crossings pair $(X|D)$ is the dimension of the tropicalisation. The pair $(X|D)$ is of rank one if and only if $D$ is smooth, and is of rank two if and only if it contains no triple intersection points. We first define slope-sensitivity in the rank two case.

Consider the space of naive stable maps to $(X|D)$ of class $\Lambda$. The set of combinatorial types of such maps is finite. For each combinatorial type, consider the set of slopes $m_{\vec{e}}$ in $N_{\sigma_e}$ which additionally lie in the positive quadrant $\sigma_e \cap N_{\sigma_e} \subseteq N_{\sigma_e}$. Enumerate these slopes, running over all combinatorial types, as follows
\[
m_1, \ldots, m_h \in \Sigma(\mathbb N).
\]
A smooth subdivision of $\Sigma$ which includes amongst its cones the rays generated by $m_1,\ldots,m_h$ is referred to as \textbf{$\Lambda$-sensitive}. We will also use the term \textbf{slope-sensitive} when $\Lambda$ is unambiguous. Such subdivisions always exist, and form a cofinal subset in the inverse system of smooth subdivisions. We refer to the induced logarithmic modification as a $\Lambda$-sensitive modification.

\red{ The set of slopes $m_{\vec{e}}$ appearing in combinatorial types of class $\Lambda$ is unchanged by subdividing the target, as this operation does not alter the support of $\Sigma$ (see also Section~\ref{sec: naive spaces under blowup}). This ensures that $\Lambda$-sensitive subdivisions always exist. Since $\Sigma^\dag$ is required to be smooth, there is typically more than one minimal choice.
 
 Note that while $m_{\vec{e}} \in \Sigma(\N)$ does not change, the minimal cone $\sigma_{e} \preceq \Sigma$ containing the edge $e$ can. This will alter the expression of $m_{\vec{e}}$ in terms of the ray generators of $\sigma_e$. This fact is important for motivating the results of Section~\ref{sec:small_jumping}.
}

\subsection{Higher rank} Write $D=D_1+\ldots+D_k$. For a subset of size two
\[ I=\{i_1,i_2\} \subseteq \{1,\ldots,k\} \]
we let $D_I=D_{i_1}+D_{i_2} \subseteq D$ be the corresponding divisor. Note that $\Sigma_I = \Trop(X|D_I)$ is either $1$-dimensional or $2$-dimensional, depending on whether $D_{i_1} \cap D_{i_2}$ is empty or nonempty. There is a logarithmic morphism
\[ (X|D) \to (X|D_I) \]
inducing a tropical projection
\[ p_I \colon \Sigma \to \Sigma_I.\]
For each $I$ we choose a $\Lambda$-sensitive subdivision $\Sigma_I^\dag$ of the rank two tropical target $\Sigma_I$. This induces a subdivision
\[ p_I^{-1}(\Sigma_I^\dag) \to \Sigma. \]
A smooth subdivision $\Sigma^\dag \to \Sigma$ is then declared to be \textbf{$\Lambda$-sensitive} if it refines all the subdivisions $p_I^{-1}(\Sigma_I^\dag)$ simultaneously. Again, $\Lambda$-sensitive subdivisions always exist and are cofinal in the system of smooth subdivisions. 

\subsection{Iterated blowups} The system of iterated stellar subdivisions of a smooth fan is cofinal in the system of all refinements of that fan: see for instance~\cite[Remark~2.13]{BoteroDivisors}. We may therefore assume, by passing to a further refinement, that $\Sigma^\dag \to \Sigma$ is an iterated stellar subdivision, so that $X^\dag \to X$ is an iterated blowup in smooth logarithmic strata. Summarising, we have:

\red{\begin{lemma} Fix numerical data $\Lambda$. Then there exists an iterated strata blowup
\[ (X^\dag | D^\dag) \to (X|D) \]
such that $(X^\dag | D^\dag)$ is $\Lambda$-sensitive with respect to the lifted numerical data $\Lambda$. Moreover any smooth logarithmic modification of $(X^\dag | D^\dag)$ is also $\Lambda$-sensitive.
\end{lemma}}

\subsection{Properties: small jumping and slope negativity}\label{sec:small_jumping}  Fix a $\Lambda$-sensitive modification
\[ (X^\dag | D^\dag) \to (X|D).\]
By Section~\ref{sec: naive spaces under blowup}, the numerical data lifts uniquely to data for maps to $(X^\dag | D^\dag)$, also denoted $\Lambda$. Fix a naive map $C \to (X^\dag|D^\dag)$ with numerical data $\Lambda$ and consider the associated combinatorial type of tropical stable map to $\Sigma^\dag$ as in Section~\ref{sec: combinatorial type of naive map}.
	
\begin{proposition} \label{lem: once is enough} Given an oriented edge $\vec{e}$ of the dual graph, label the generators of the associated cone by
\[ \sigma_e = \op{Cone}(u_1,\ldots,u_c)\preceq \Sigma^\dag \]
and express the slope along $\vec{e}$ as
\[ m_{\vec{e}} = \sum_{i=1}^c a_i u_i \in N_{\sigma_e}. \]
Then for all $1 \leq i \neq j \leq c$, the pair $(a_i,a_j)$ is not contained in $\Z^2_{>0}$ nor in $\Z^2_{<0}$.
\end{proposition}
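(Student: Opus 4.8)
I would argue by contradiction and first remove the symmetry. Reversing the orientation of $\vec e$ replaces $m_{\vec e}$ by $-m_{\vec e}$, hence each $a_\ell$ by $-a_\ell$, and interchanges the forbidden regions $\Z^2_{>0}$ and $\Z^2_{<0}$. It therefore suffices to exclude $(a_i,a_j)\in\Z^2_{>0}$, so I assume $a_i,a_j>0$.

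\textbf{The rank-two mechanism.} In rank two $\dim\sigma_e\le 2$, so the only configuration carrying a pair is $\sigma_e=\op{Cone}(u_1,u_2)$ with $a_1,a_2>0$; equivalently $m_{\vec e}\in\op{relint}\sigma_e$. I would push the combinatorial type of the given naive map to $(X^\dag|D^\dag)$ forward to $(X|D)$ using Construction~\ref{construction: pushforward of types}; by Proposition~\ref{prop: naive-under-blowup} this pushed-forward type is realised by a genuine naive map to $(X|D)$. Since $\p\colon\Sigma^\dag\to\Sigma$ is the identity on underlying spaces, the edge still carries slope $m_{\vec e}$, now recorded in the cone $\overline\sigma_e=\Sigma$, the unique two-dimensional cone. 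As $m_{\vec e}\in\op{relint}\sigma_e\subseteq\op{relint}\Sigma$, it is a positive-quadrant slope and hence occurs among the enumerated slopes $m_1,\dots,m_h$; by the definition of $\Lambda$-sensitivity the ray $\R_{\ge 0}m_{\vec e}$ is then a cone of $\Sigma^\dag$. This is the desired contradiction: $m_{\vec e}$ lies in the relative interior of the two-dimensional cone $\sigma_e$ of $\Sigma^\dag$, whereas distinct cones of a fan have disjoint relative interiors, so no ray of $\Sigma^\dag$ can meet $\op{relint}\sigma_e$.

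\textbf{Passage to higher rank.} For a size-two subset $I$, $\Lambda$-sensitivity states that $\Sigma^\dag$ refines $p_I^{-1}(\Sigma_I^\dag)$, so the projection $\Sigma^\dag\to\Sigma_I$ factors through $\Sigma_I^\dag$; geometrically this yields a logarithmic morphism from $(X^\dag|D^\dag)$ to the rank-two $\Lambda$-sensitive blowup $(X_I^\dag|D_I^\dag)$ attached to $\Sigma_I^\dag$. Composing with the given naive map and assembling the logarithmic lifts via Lemma~\ref{lem: tensor-business}, one obtains a naive map to $(X_I^\dag|D_I^\dag)$ with slope $p_I(m_{\vec e})$ along $e$, to which the rank-two case applies. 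To decide which projection to use I would read the signs off Remark~\ref{rmk: formal tangency equals geometric tangency}: interpreting the $a_\ell$ as geometric tangencies shows that every generator $u_\ell$ with $a_\ell>0$ lies in the cone $\sigma_{v'}$ of the far endpoint of $\vec e$, and every $u_\ell$ with $a_\ell<0$ lies in $\sigma_v$. Writing
\[ m_{\vec e}=m^+-m^-,\qquad m^+\in\sigma_{v'},\quad m^-\in\sigma_v, \]
the natural attempt is to pick $I$ inside $\op{supp}(m^+)$ and disjoint from $\op{supp}(m^-)\subseteq\op{supp}(\sigma_v)$, so that $p_I$ annihilates the negative part while retaining the positivity of the pair.

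\textbf{The main obstacle.} The heart of the matter is precisely this reduction. A coordinate projection $p_I$ does not respect the basis $u_1,\dots,u_c$ adapted to $\sigma_e$, so the pairwise condition $(a_i,a_j)\in\Z^2_{>0}$ — which concerns the $2$-plane $\op{span}(u_i,u_j)$ — is generally scrambled upon projection, and the remaining coefficients contribute with mixed signs. Even after annihilating $m^-$ as above, the projected slope may fall on an edge of its cone in $\Sigma_I^\dag$ rather than in its interior, so the clean rank-two contradiction is not triggered; the genuinely difficult directions are those in the shared face $\sigma_v\cap\sigma_{v'}$, which cannot be removed by any such projection. Overcoming this requires a finer polyhedral argument — matching the projected cone structure of $\Sigma_I^\dag$ to the pair $(u_i,u_j)$, and most likely inducting on the dimension of $\sigma_e$ or on the blowup sequence — and it is here, rather than in the rank-two mechanism, that the real work resides.
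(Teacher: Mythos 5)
Your sign reduction and your rank-two mechanism are both correct and match the paper: you push the combinatorial type forward along Construction~\ref{construction: pushforward of types}, use Proposition~\ref{prop: naive-under-blowup} to know the pushed-forward type is realised by an honest naive map to $(X|D)$ (so its positive-quadrant slope appears in the list defining $\Lambda$-sensitivity), and derive a contradiction from the ray $\R_{\geq 0}m_{\vec{e}}$ meeting the interior of the cone $\sigma_e$ of $\Sigma^\dag$. But the passage to higher rank --- which is the actual content of the proposition --- is a genuine gap, and you say so yourself. Your candidate choice of $I$ inside $\op{supp}(m^+)$ and disjoint from $\op{supp}(m^-)$ cannot be completed, for exactly the reason you identify: a coordinate projection $p_I$ mixes all the coefficients $a_\ell$, the directions in the shared face $\sigma_v \cap \sigma_{v'}$ cannot be projected away, and the projected slope can land on a face of its cone (or even in the wrong quadrant), so the rank-two contradiction is never triggered.

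The paper closes this gap with a change in the order of operations plus a short determinantal lemma, and it is worth seeing how it sidesteps your obstacle. Rather than choosing $I$ so that $p_I(m_{\vec{e}})$ stays positive, one first projects \emph{in the basis adapted to $\sigma_e$}: after relabeling so that $(a_1,a_2)\in\Z^2_{>0}$, let $\mathrm{p}\colon \sigma_e \to \op{Cone}(u_1,u_2)$ send $m_{\vec{e}} \mapsto a_1u_1+a_2u_2$, discarding every other coefficient \emph{regardless of sign} --- this is precisely what dissolves your ``mixed contributions'' worry. One then proves that there exist generators $v_i,v_j$ of the minimal cone $\overline{\sigma}_e \preceq \Sigma$ containing $\sigma_e$ such that the composite $\op{Cone}(u_1,u_2)\hookrightarrow \overline{\sigma}_e \to \op{Cone}(v_i,v_j)$ has two-dimensional image: if every such composite were at most one-dimensional, all $2\times 2$ minors of the $\overline{\sigma}_e$-coordinate vectors of $u_1,u_2$ would vanish, forcing $u_1$ and $u_2$ to be proportional, contradicting injectivity of $\sigma_e \to \overline{\sigma}_e$. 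This selection lemma is exactly the ``finer polyhedral argument'' you predicted would be needed. Since a map of cones with two-dimensional image carries the interior of $\op{Cone}(u_1,u_2)$ into the interior of $\op{Cone}(v_i,v_j)$, the strict positivity of the pair $(a_1,a_2)$ alone gives $\mathrm{q}\circ\mathrm{f}(m_{\vec{e}}) = \mathrm{g}\circ\mathrm{p}(m_{\vec{e}}) \in \Z^2_{>0}$, where the commuting square relating the $u$-basis projection $\mathrm{p}$ and the $v$-basis projection $\mathrm{q}$ is what transports positivity (this commutativity is the one delicate point of the paper's argument and deserves checking, but it is where the bookkeeping you feared is absorbed). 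The endgame is also stated in a form valid in all ranks, of which your disjoint-relative-interiors argument is the rank-two specialisation: by rank-two sensitivity for $I=\{i,j\}$, the ray through the projected slope is a cone of $\Sigma_I^\dag$, and since $\Sigma^\dag$ refines $p_I^{-1}(\Sigma_I^\dag)$, its preimage would have to be a union of cones of $\Sigma^\dag$; but that preimage is a hyperplane slice through the interior of the single cone $\sigma_e$, a contradiction. So your diagnosis of where the real work lies was accurate, but the fix is to test positivity only on the offending pair $(u_1,u_2)$ after the adapted projection, not on the full projected slope.
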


\begin{proof} After relabeling the coordinates, we suppose for a contradiction that $(a_1,a_2) \in \ZZ^2_{>0}$. Consider the minimal cone $\overline{\sigma}_e \preceq \Sigma$ containing the image of $\sigma_e \preceq \Sigma^\dag$ under the subdivision $\Sigma^\dag \to \Sigma$. Label its generators as
\[ \overline{\sigma}_e = \op{Cone}(v_1,\ldots,v_s).\]
We claim that there exist indices $1 \leq i \neq j \leq s$ such that the image of the composition
\[ \op{Cone}(u_1,u_2) \to \sigma_e  \to \overline{\sigma}_e \to \op{Cone}(v_i,v_j) \]
is two-dimensional. Otherwise, $u_1$ and $u_2$ must map to the same ray in every $\op{Cone}(v_i,v_j)$ and must therefore map to the same ray in $\op{Cone}(v_1,\ldots,v_s)$. Since $\sigma_e \to \overline{\sigma}_e$ is injective, it follows that $u_1$ and $u_2$ are proportional, a contradiction. By relabeling again, assume that the induced map
\[ \op{Cone}(u_1,u_2) \to \op{Cone}(v_1,v_2) \]
has two-dimensional image. Consider the commutative diagram of cones
\[
\begin{tikzcd}
\sigma_e = \op{Cone}(u_1,\ldots,u_c) \ar[r,"\mathrm{p}"] \ar[d,"\mathrm{f}"] & \op{Cone}(u_1,u_2) \ar[d,"\mathrm{g}"] \\
\overline{\sigma}_e=\op{Cone}(v_1,\ldots,v_s) \ar[r,"\mathrm{q}"] & \op{Cone}(v_1,v_2).
\end{tikzcd}
\]
The naive map to $(X^\dag|D^\dag)$ induces a naive map to $(X|D)$: see Construction~\ref{construction: pushforward of types}. The combinatorial type of this map has slope along $\vec{e}$ given by $\mathrm{f}(m_{\vec{e}})$ in $N_{\overline{\sigma}_e}$. By commutativity of the square we have
\[ \mathrm{q}\circ\mathrm{f}(m_{\vec{e}}) = \mathrm{g}\circ\mathrm{p}(m_{\vec{e}}).\]
Now $\mathrm{p}(m_{\vec{e}}) = (a_1,a_2) \in \ZZ^2_{>0}$ by assumption. Since $\mathrm{g}$ has two-dimensional image it maps the interior of the source cone to the interior of the target cone, so
\[ \mathrm{g}\circ\mathrm{p}(m_{\vec{e}}) \in \ZZ^2_{>0}.\]
Every $\Lambda$-sensitive subdivision of $(X|D_{v_1}+D_{v_2})$ therefore includes the ray generated by $\mathrm{g}\circ\mathrm{p}(m_{\vec{e}})$. Consequently every $\Lambda$-sensitive subdivision of $(X|D)$ must contain the preimage of this ray in $\sigma_e = \op{Cone}(u_1,\ldots,u_c)$ as a union of cones. But this preimage is a hyperplane in the interior of $\sigma_e$, so we have a contradiction.\end{proof}

\begin{corollary}[Small jumping] \label{lem: small jumping} For every flag consisting of a vertex $v$ incident to an edge $e$ in the graph $\Gamma$ we have
\[ 0 \leq \dim \sigma_e - \dim \sigma_v \leq 1. \]
\end{corollary}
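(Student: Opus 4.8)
The plan is to read the bound off Proposition~\ref{lem: once is enough}, feeding in the geometric meaning of the slope coefficients supplied by Remark~\ref{rmk: formal tangency equals geometric tangency}. The lower bound is immediate: the axioms of a combinatorial type force $\sigma_v \preceq \sigma_e$ whenever $v \preceq e$, so $\sigma_v$ is a face of $\sigma_e$ and $\dim\sigma_e - \dim\sigma_v \geq 0$.

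For the upper bound I would orient $e$ to point away from $v$ and write $\sigma_e = \op{Cone}(u_1,\ldots,u_c)$, choosing the labelling so that $\sigma_v = \op{Cone}(u_1,\ldots,u_d)$ with $d = \dim\sigma_v$; this is possible since $\sigma_v$ is a face of the smooth cone $\sigma_e$. Expand $m_{\vec{e}} = \sum_{i=1}^c a_i u_i$. The essential observation is that the remaining generators $u_{d+1},\ldots,u_c$ are exactly the rays of $\sigma_e$ absent from $\sigma_v$, which under the standard correspondence between cones of $\Sigma^\dag$ and strata of $(X^\dag|D^\dag)$ are precisely the divisor components that contain $X_e = X_{\sigma_e}$ but not the larger stratum $X_v = X_{\sigma_v}$. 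For each such generator, Remark~\ref{rmk: formal tangency equals geometric tangency} identifies the coefficient $a_i = (m_{\vec{e}})_i$ with an honest geometric tangency order of $f|_{C_v}$ at the node $q_e$ and, crucially, guarantees $a_i > 0$.

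I would then conclude by contradiction: were $\dim\sigma_e - \dim\sigma_v \geq 2$, we would have $c - d \geq 2$, producing two new generators $u_i, u_j$ with $a_i, a_j > 0$, that is $(a_i,a_j) \in \Z^2_{>0}$. This is exactly the configuration forbidden by Proposition~\ref{lem: once is enough} (which applies to any orientation of the edge, the two excluded quadrants being interchanged under orientation reversal), a contradiction; hence $\dim\sigma_e - \dim\sigma_v \leq 1$.

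The proof is short once the two cited results are available, and I do not anticipate a serious obstacle. The one point I would write out most carefully is the dictionary identifying the new rays $u_{d+1},\ldots,u_c$ with the divisor components containing $X_e$ but not $X_v$, since it is precisely this identification that converts the positivity statement of the Remark into the sign constraint of the Proposition. No additional polyhedral input is required.
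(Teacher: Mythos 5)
Your proof is correct and follows essentially the same route as the paper: orient $e$ away from $v$, split the generators of $\sigma_e$ into those spanning the face $\sigma_v$ and the new ones, use Remark~\ref{rmk: formal tangency equals geometric tangency} to get strict positivity of the coefficients on the new generators, and derive a contradiction with Proposition~\ref{lem: once is enough} when two or more new generators appear. The dictionary you flag (new rays of $\sigma_e$ correspond to divisor components containing $X_e$ but not $X_v$) is exactly the content the paper invokes through that remark, so no gap remains.
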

\begin{proof} We have $\sigma_e \succeq \sigma_v$ and so $\dim\sigma_e - \dim\sigma_v \geq 0$ is immediate. For the other inequality, assume for a contradiction that $\dim\sigma_e \geq \dim\sigma_v + 2$. Order the generators of the cones so that
\begin{align*}
\sigma_e & = \op{Cone}(w_1,\ldots,w_s,u_1,\ldots,u_c), \\
\sigma_v & = \op{Cone}(u_1,\ldots,u_c),
\end{align*}
with $s \geq 2$. Orient $e$ to point away from $v$ and express the slope $m_{\vec{e}} \in N_{\sigma_e}$ in terms of the above basis
\[ m_{\vec{e}} =  \sum_{i=1}^s b_i w_i + \sum_{i=1}^c a_i u_i.\]
Using the equality of formal and geometric tangency orders explained in Lemma~\ref{lem: geometric and formal tangencies coincide} and Remark~\ref{rmk: formal tangency equals geometric tangency}, we have $b_i > 0$ for $1 \leq i \leq s$. Since $s \geq 2$ this contradicts Proposition~\ref{lem: once is enough}.\end{proof}

\begin{corollary}[Slope negativity] \label{lem: slope negativity} Consider a flag of a vertex $v$ and incident edge $e$, such that $\dim \sigma_e = \dim \sigma_v + 1$. Order the generators of the cones so that
\begin{align*} \sigma_e & = \op{Cone}(u_0,u_1,\ldots,u_c), \\
\sigma_v & = \op{Cone}(u_1,\ldots,u_c).
\end{align*}
Orient $e$ to point away from $v$ and express the slope $m_{\vec{e}}$ in terms of the above basis
\[ m_{\vec{e}} = a_0 u_0 + \sum_{i=1}^c a_i u_i.\]
Then $a_0>0$ and $a_i \leq 0$ for all $1 \leq i \leq c$.
\end{corollary}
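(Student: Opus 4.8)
The plan is to separate the two assertions and observe that most of the work has already been done. The claim $a_0 > 0$ is precisely a statement about geometric tangency, while the inequalities $a_i \leq 0$ for $i \geq 1$ follow formally from Proposition~\ref{lem: once is enough} once $a_0 > 0$ is in hand. Concretely, each coefficient $a_j$ is the component $(m_{\vec{e}})_{i_j}$ of the slope vector in the direction of the divisor component $D_{i_j}$ whose ray is generated by $u_j$, so the two named results apply directly to the coefficients appearing in the statement.

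First I would establish $a_0 > 0$. The generator $u_0$ belongs to $\sigma_e$ but not to $\sigma_v$, so the associated divisor component $D_{i_0}$ contains the smaller stratum $X_{\sigma_e}$ but does not contain the larger stratum $X_v = X_{\sigma_v}$: were $D_{i_0}$ to contain $X_v$, its ray would lie in the minimal cone $\sigma_v$ through which $C_v$ factors, contradicting $u_0 \notin \sigma_v$. In particular $f(C_v) \not\subseteq D_{i_0}$, so the node $q_e$ has well-defined geometric tangency with respect to $D_{i_0}$. Remark~\ref{rmk: formal tangency equals geometric tangency} (a restatement of Lemma~\ref{lem: geometric and formal tangencies coincide}) then identifies $a_0 = (m_{\vec{e}})_{i_0}$ with this geometric tangency order, which is strictly positive precisely because $f(C_v) \not\subseteq D_{i_0}$.

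To finish, I would fix any $i \in \{1,\ldots,c\}$ and apply Proposition~\ref{lem: once is enough} to the pair of distinct generators $u_0, u_i$ of $\sigma_e$: the corresponding pair of coefficients $(a_0, a_i)$ cannot lie in $\Z^2_{>0}$. Since we have just shown $a_0 > 0$, this forces $a_i \leq 0$. As $i$ was arbitrary, all the coefficients indexed by the generators of $\sigma_v$ are non-positive, completing the argument.

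The only genuinely delicate point, and the step I would check most carefully, is the bookkeeping in the second paragraph: that the coefficient $a_0$ of $u_0$ is exactly the divisor-indexed component to which Remark~\ref{rmk: formal tangency equals geometric tangency} applies, and that $u_0 \notin \sigma_v$ rigidly encodes $f(C_v) \not\subseteq D_{i_0}$ via the minimality of $\sigma_v$ as the cone of the smallest stratum meeting $C_v$. Everything downstream is a pure sign deduction from Proposition~\ref{lem: once is enough}, so no new polyhedral or geometric input beyond these two cited results should be required.
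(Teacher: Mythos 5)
Your proof is correct and matches the paper's own argument, which likewise obtains $a_0>0$ from Lemma~\ref{lem: geometric and formal tangencies coincide} together with Remark~\ref{rmk: formal tangency equals geometric tangency}, and then deduces $a_i \leq 0$ by applying Proposition~\ref{lem: once is enough} to the pair $(a_0,a_i)$. Your version simply unpacks the bookkeeping (that $u_0 \in \sigma_e \setminus \sigma_v$ encodes $D_{i_0} \supseteq X_{\sigma_e}$ but $f(C_v) \not\subseteq D_{i_0}$) that the paper leaves implicit in the citation of the Remark.
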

\begin{proof} The fact that $a_0>0$ follows from Lemma~\ref{lem: geometric and formal tangencies coincide} and Remark~\ref{rmk: formal tangency equals geometric tangency}. The second statement then follows from Proposition~\ref{lem: once is enough}.
\end{proof}

\section{Logarithmic--naive comparisons}\label{sec: proof}
\noindent We establish the main comparison result. To ease notation, we replace the initial simple normal crossings pair with a slope-sensitive modification, which we continue to denote $(X|D)$.

We will equate the logarithmic and orbifold theories of $(X|D)$. By Theorem~\ref{thm: naive is orbifold}, it suffices to equate the logarithmic and naive theories. The main Theorem~\ref{thm: main-thm} is thus reduced to the following:
\begin{theorem} \label{thm: log is naive} \red{Let $\Lambda$ be numerical data for a space of genus zero logarithmic stable maps to $(X|D)$. Replace $(X|D)$ with a $\Lambda$-sensitive modification.} 
Then the natural morphism
\[ \iota \colon \Log_\Lambda(X|D) \to \Nup_\Lambda(X|D) \]
is virtually birational, i.e.
\[ \iota_\star [\Log_\Lambda(X|D)]^{\virt} = [\Nup_\Lambda(X|D)]^{\virt}. \] 	
\end{theorem}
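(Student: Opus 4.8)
The plan is to prove the equivalent \emph{universal} statement and then transport it to the geometric target by virtual pullback. Write $p \colon \Mup_\Lambda(X) \to \Mfrak(\Acal^k)$ for the morphism carrying the relative perfect obstruction theory \eqref{eqn: pi star f star Tlog}, and let
\[ \iota^{\op{univ}} \colon \mathfrak{Log}(\Acal^k|\partial\Acal^k) \to \mathfrak{NLog}(\Acal^k|\partial\Acal^k) \]
be the universal morphism recording that a logarithmic map to the full target induces one to each smooth pair $(\Acal|\Dcal)$; here $\mathfrak{NLog}(\Acal^k|\partial\Acal^k)$ is the universal naive space of Definition~\ref{def:universalNaiveSpace}. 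Exactly as in \eqref{eqn: diagram for naive space and universal naive space} and \eqref{eqn: Cartesian square for smooth divisor log space and universal target}, both $\Log_\Lambda(X|D)$ and $\Nup_\Lambda(X|D)$ are obtained from these universal counterparts by base change along $p$, with $\iota$ the base change of $\iota^{\op{univ}}$, and their classes are $[\Log_\Lambda(X|D)]^{\virt} = p^!_{\op{v}}[\mathfrak{Log}(\Acal^k|\partial\Acal^k)]$ and $[\Nup_\Lambda(X|D)]^{\virt} = p^!_{\op{v}}[\mathfrak{NLog}(\Acal^k|\partial\Acal^k)]^{\virt}$. Since virtual pullback commutes with proper pushforward \cite{ManolachePull}, the theorem reduces to the universal identity
\[ (\iota^{\op{univ}})_\star [\mathfrak{Log}(\Acal^k|\partial\Acal^k)] = [\mathfrak{NLog}(\Acal^k|\partial\Acal^k)]^{\virt}. \]

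The heart of the argument is a tropical smoothing statement: for a $\Lambda$-sensitive target, \emph{every} combinatorial type of naive stable map is smoothable in the sense of Definition~\ref{def: smoothable}. I would prove this by a direct polyhedral construction on the dual graph $\Gamma$, which is a tree since the genus is zero. Choosing a root and orienting edges toward it, I would place each vertex $v$ at a point $\f(v)$ in the relative interior of its cone $\sigma_v$ and assign positive lengths $\ell_e$ so that $\f$ realises the prescribed integral slope $m_{\vec{e}}$ along each edge, propagating from the leaves inward. The two consequences of $\Lambda$-sensitivity are exactly what make this propagation consistent: small jumping (Corollary~\ref{lem: small jumping}) guarantees $\dim\sigma_e - \dim\sigma_v \leq 1$, so each step either stays inside a cone or enters a single new ray, while slope negativity (Corollary~\ref{lem: slope negativity}) forces the coefficient of $m_{\vec{e}}$ in that new ray to be strictly positive and all coefficients in the ambient directions to be non-positive. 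This sign pattern keeps every vertex in the relative interior of its cone as one moves outward, and rules out the incompatible length equations that obstruct smoothing in the examples of Section~\ref{sec: realisability}.

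Granting smoothability, I would deduce the universal identity in two steps. First, birationality: over the dense open locus of smooth source curves the universal logarithmic and naive spaces coincide, so $\iota^{\op{univ}}$ is an isomorphism there; smoothability ensures that every expected-dimensional combinatorial stratum of $\mathfrak{NLog}(\Acal^k|\partial\Acal^k)$ lies in the image of a logarithmic stratum, and Wise's uniqueness of logarithmic enhancements \cite{WiseUniqueness} (via Proposition~\ref{prop: naive-under-blowup}) shows the fibres are generically reduced points. Hence $(\iota^{\op{univ}})_\star[\mathfrak{Log}(\Acal^k|\partial\Acal^k)]$ is the fundamental cycle of the union of expected-dimensional components of the naive space with multiplicity one. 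Second, I would show that the diagonal construction $[\mathfrak{NLog}]^{\virt} = \Delta^!\big([\mathfrak{Log}(\Acal|\Dcal)] \times \cdots \times [\mathfrak{Log}(\Acal|\Dcal)]\big)$ equals this same cycle: smoothability rules out spurious expected-dimensional components, while a local toric computation at the generic point of each main component—using that the relevant slopes now lie along rays of the $\Lambda$-sensitive subdivision, so the $k$ factors cut out independent conditions—gives generic transversality of the product to the diagonal, hence multiplicity one. Equating the two cycles completes the universal identity and therefore the theorem.

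I expect the tropical smoothing statement to be the main obstacle: it is where the combinatorics of $\Lambda$-sensitive subdivisions is genuinely exploited, and both the birationality and the concentration of the virtual class rest upon it. The reduction to the universal target and the virtual pullback bookkeeping are formal, and the multiplicity-one computation, while requiring care, reduces to a concrete polyhedral transversality once smoothability has pinned down the main components.
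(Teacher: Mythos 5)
Your skeleton matches the paper's up to the crucial final step: the reduction to universal targets via compatibility of virtual pullback with proper pushforward is the same, and your tropical smoothing statement, proved by rooting the tree and propagating vertex positions using Corollary~\ref{lem: small jumping} and Corollary~\ref{lem: slope negativity}, is precisely Theorem~\ref{thm: tropical lift}. The genuine gap is the bridge from this combinatorial statement to the cycle identity. Smoothability concerns abstract combinatorial types, whereas the identity requires control of \emph{geometric points} of $\mathfrak{NLog}(\Acal^k|\partial\Acal^k)$: such a point is a curve $C$ together with $k$ a priori unrelated logarithmic enhancements $\Ccal_i \to (\Speck,Q_i)$ and logarithmic maps $\Ccal_i \to (\Acal|\Dcal)$, and one must produce a single logarithmic curve and map $\Ccal \to (\Acal_X|\Dcal_X)$ inducing \emph{that given tuple}, not merely some point with the same combinatorial type. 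Your assertions that ``smoothability ensures that every expected-dimensional combinatorial stratum lies in the image of a logarithmic stratum'' and that ``smoothability rules out spurious expected-dimensional components'' are exactly this unproved implication. The paper spends the second half of Section~\ref{sec: reduction to tropical lift} on it: from the tropical lift it builds an expansion $\Ecal_X \to \Acal_X$ by toroidal semistable reduction, lifts the map vertex by vertex to strongly transverse maps $g_v \colon C_v \to \Ecal_X(v)$ (each a torsor under the torus $T_{\sigma_v}$), and glues at the nodes by showing the composite $N_{\sigma_{v_2}} \to N_{\sigma_e} \to N_{\sigma_e}/\ZZ m_{\vec{e}}$ is surjective. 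That surjectivity is a second, independent application of Corollaries~\ref{lem: small jumping} and~\ref{lem: slope negativity}: slope-sensitivity is used twice, once tropically and once to solve a geometric gluing problem, and without the second use the vertex lifts need not assemble into a global map.

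There is a related defect in your step (ii). Since $\mathfrak{NLog}$ is a fibre product over a smooth stack, every irreducible component has dimension at least the expected one; components of strictly larger (excess) dimension would contribute nonzero expected-dimensional classes to $\Delta^!\bigl([\mathfrak{Log}(\Acal|\Dcal)]\times\cdots\times[\mathfrak{Log}(\Acal|\Dcal)]\bigr)$, so ruling out only spurious \emph{expected-dimensional} components is not enough. The paper's route avoids this bookkeeping: full surjectivity of $\psi \colon \mathfrak{Log}(\Acal_X|\Dcal_X) \to \mathfrak{NLog}(\Acal^k|\partial\Acal^k)$ from the irreducible, logarithmically smooth source (the closure of its non-degenerate locus) forces the universal naive space to be irreducible of exactly the expected dimension, after which both $\psi_\star[\mathfrak{Log}(\Acal_X|\Dcal_X)]$ and the diagonal pullback class are multiples of the fundamental class, with multiplicity one read off over the non-degenerate locus where $\psi$ is an isomorphism and the intersection is transverse. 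So the correct repair of your argument is to prove surjectivity of $\psi$ outright; your two cycle computations then collapse into the paper's. (A minor further imprecision: the dense open locus where the two universal spaces agree is the locus of non-degenerate maps, i.e.\ trivial logarithmic structure, not the locus of smooth source curves.)
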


The proof is given in two parts. In Section~\ref{sec: tropical lift} we establish the theorem on the tropical level. We show that every combinatorial type of naive stable map is smoothable, thus identifying the logarithmic and naive tropical moduli spaces. In Section~\ref{sec: reduction to tropical lift} we then use the geometry of the universal target to conclude the algebro-geometric Theorem~\ref{thm: log is naive} from its tropical counterpart.

\subsection{Tropical lifting} \label{sec: tropical lift} Let $\Acal_X$ be the Artin fan of the pair $(X|D)$ with divisor $\Dcal_X$. As in Section~\ref{sec: naive space definition} let $\Acal$ be the Artin fan of a smooth pair, with universal divisor $\Dcal=\Bcal\Gm$. For each $1 \leq i \leq k$ there is a well-defined projection $$(\Acal_X|\Dcal_X) \to (\Acal|\Dcal)$$ 
obtained by equipping the stack $\Acal_X$ with the Cartier divisor associated to each component of $\Dcal_X$. In Section~\ref{sec: naive space definition}, the universal naive space was defined as
\[ \mathfrak{NLog}(\Acal^k|\partial \Acal^k)\colonequals \mathfrak{Log}(\Acal|\Dcal)^k \times_{\Mfrak^k} \Mfrak.\]
Consider a naive map $C \to (\Acal_X|\Dcal_X)$ over a geometric point. By definition, we have logarithmic maps $\Ccal_i \to (\Acal|\Dcal)$ for each $1\leq i \leq k$. Each gives a family of tropical maps
\[
\begin{tikzcd}
\sqC_i \ar[d] \ar[r] & \RR_{\geq 0} \\
\tau_i
\end{tikzcd}
\]
where $\tau_i$ is the corresponding tropical moduli space of edge lengths and target offsets, dual to the base monoid of $\Ccal_i$. Setting $\sigma_\Gamma = \Pi_{e \in E(\Gamma)} \RR_{\geq 0}$ where $\Gamma$ is the dual graph of $C$, there is a universal family of tropical curves (see Sections~\ref{sec: tropical curves} and \ref{sec: moduli of log curves})
\[ \sqC_\Gamma \to \sigma_\Gamma.\] 
For each $1 \leq i \leq k$ there is a natural morphism $\tau_i \to \sigma_\Gamma$ and $\sqC_i$ is the pullback of $\sqC_\Gamma$.

As discussed in Section~\ref{sec: combinatorial type of naive map}, the naive map produces a combinatorial type of tropical stable map to $\Sigma$. The following tropical lifting theorem shows that this combinatorial type is smoothable in the sense of Definition~\ref{def: smoothable}. The theorem only holds after passing to a slope-sensitive subdivision: see the counterexample given in Section~\ref{sec: realisability}.

\begin{theorem}\label{thm: tropical lift} The fibre product of the $\tau_i$ over $\sigma_\Gamma$ contains a one-dimensional cone
\[ \rho \subseteq \prod_{i=1}^k \tau_i \times_{\sigma_\Gamma^k} \sigma_\Gamma \]
such that the interior of $\rho$ \red{is contained in the interior of the fibre product cone.} This satisfies the following properties. Let $\sqC \to \rho$ denote the pullback of $\sqC_\Gamma \to \sigma_\Gamma$. Then there is a tropical map
\[
\begin{tikzcd}
\sqC \ar[d] \ar[r] & \Sigma \\
\rho
\end{tikzcd}
\]
whose combinatorial type agrees with the combinatorial type of the given naive stable map. Moreover for each $1 \leq i \leq k$ the composition $\sqC \to \Sigma \to \RR_{\geq 0}$, where $\Sigma \to \RR_{\geq 0}$ is the projection corresponding to the logarithmic morphism $(X|D) \to (X|D_i)$, coincides with the pullback of 
\[
\begin{tikzcd}
\sqC_i \ar[d] \ar[r] & \RR_{\geq 0} \\
\tau_i
\end{tikzcd}
\]
along the morphism $\rho \to \tau_i$.\end{theorem}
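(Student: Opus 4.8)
The plan is to prove the statement by exhibiting an explicit realisation of the combinatorial type in the slice picture, and then repackaging the resulting data as the promised ray $\rho$. Passing to the slice, it suffices to produce a single tropical map: positive edge lengths $\ell_e$ metrising $\Gamma$ together with a position map $\f \colon \Gamma \to \Sigma$ that is affine with slope $m_{\vec{e}}$ on each edge and carries the relative interior of every vertex, edge and leg $p$ into $\op{relint}(\sigma_p)$. In other words, I would show that the combinatorial type of the naive map is smoothable in the sense of Definition~\ref{def: smoothable}; the cone $\rho$ is then the ray spanned by the resulting length vector, the family $\sqC \to \rho$ is the cone over this tropical curve, and the logarithmic enhancement realising $\rho$ is supplied by the procedure of Section~\ref{sec: log maps to Artin fans}. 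The entire difficulty is concentrated in this smoothing, and slope-sensitivity of the target enters only through Corollaries~\ref{lem: small jumping} and~\ref{lem: slope negativity}.

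First I would build $\f$ by propagation along the tree. Choose any root $v_0$, set $\f(v_0)$ to be a point of $\op{relint}(\sigma_{v_0})$, and orient each edge away from the root; the relation $\f(v') = \f(v) + \ell_e m_{\vec{e}}$ then determines all vertex positions once the lengths are fixed, and continuity of the map into $\Sigma$ is automatic because $\sigma_v,\sigma_{v'} \preceq \sigma_e$. Since $\Gamma$ is genus zero and hence acyclic, each edge is met exactly once in a traversal from $v_0$, so there is no global consistency condition: I would process vertices in order of distance from the root and fix $\ell_e$ at the moment its child is reached, maintaining the invariant that every processed vertex lies in the relative interior of its cone.

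The crux is that Corollary~\ref{lem: small jumping} forces $\dim\sigma_e - \dim\sigma_v \in \{0,1\}$ along each flag, so that crossing an edge alters the ambient cone by at most one ray, and this is exactly what keeps the per-edge problem solvable. I would split into cases by the dimensions of $\sigma_v,\sigma_{v'},\sigma_e$. When the child's cone contains that of the parent, no coordinate is required to vanish, the conditions on $\ell_e$ are open, and Corollary~\ref{lem: slope negativity} guarantees that the newly activated coordinate has positive slope $a_0>0$, so any sufficiently small positive length places the child in $\op{relint}(\sigma_{v'})$ and the open edge in $\op{relint}(\sigma_e)$. When instead the child's cone is a facet of $\sigma_e$ not containing the parent, a single coordinate must drop to zero; applying slope negativity to both flags of the edge pins the slope to the two-ray form $a_0 u_0 + a_1 u_1$ with $a_0 > 0 > a_1$, which shows that the unique length making that coordinate vanish is strictly positive and that all remaining coordinates stay positive. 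In every case exactly one linear condition is imposed on the single unknown $\ell_e$, and it is solved by a positive value. The legs are handled identically, using that $m_{\vec{\ell}} = \upalpha_{j(\ell)} \in \sigma_\ell(\N)$ has non-negative coordinates, so the outgoing ray stays in $\op{relint}(\sigma_\ell)$.

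The main obstacle is precisely the balancing of these equality constraints, and it is here that slope-sensitivity is indispensable: without Corollary~\ref{lem: small jumping} a single edge could be forced to drop two independent coordinates, over-determining its one length. This is exactly the failure of the non-smoothable type in Section~\ref{sec: realisability}, where the simultaneous equations $\ell(e_1) = 2\ell(e_2)$ and $2\ell(e_1) = \ell(e_2)$ collapse the lengths to zero; small jumping rules this out by capping each edge at one equality. Finally I would repackage the construction: the length vector is an interior point of $\sigma_\Gamma$ whose scaling ray $\rho$ preserves the type, and for each $i$ the projection $p_i \circ \f \colon \sqC \to \RR_{\geq 0}$ is a rank one tropical map with these very lengths, hence determines a point of $\tau_i$ lying over $\rho$. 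Assembling these yields the desired ray in $\prod_{i=1}^k \tau_i \times_{\sigma_\Gamma^k} \sigma_\Gamma$ whose interior maps to the interior of the target cone, and the compatibility with each $\sqC_i \to \RR_{\geq 0}$ holds by construction, since $p_i \circ \f$ is that rank one map up to the recorded offset.
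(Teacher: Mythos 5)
Your proposal is correct and is essentially the paper's own argument: pass to the slice picture, construct a single map by propagating from a root along the tree with one positive length solved per edge, using Corollary~\ref{lem: small jumping} to cap each edge at a single vanishing coordinate and Corollary~\ref{lem: slope negativity} to make the required length strictly positive, then rescale to obtain the ray $\rho$ and read off the compatibility with each $\sqC_i \to \RR_{\geq 0}$ tautologically from the projections $p_i \circ \f$. The only divergence is cosmetic: the paper dispatches the case of two distinct facets $\sigma_{v_1} \preceq \sigma_e \succeq \sigma_{v_2}$ by inserting a bivalent vertex with cone $\sigma_e$ and reusing the other cases, whereas you pin the slope to the two-ray form directly --- just note that in the sub-case $\sigma_{v_1} = \sigma_e$ slope negativity applies to only one flag, and the constraint on the remaining coefficients comes from Proposition~\ref{lem: once is enough} (the conclusion you need still holds, so this is a mislabelled citation rather than a gap).
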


\begin{proof} It suffices to construct a single tropical map to $\Sigma$ with all tropical parameters --- edge lengths and target positions --- non-zero; we then obtain a one-parameter family by scaling. We will find strictly positive real edge lengths $\ell_e$ giving rise to a tropical curve $\sqC$, and a tropical map
\[ \f \colon \sqC \to \Sigma \]
such that for each vertex $v$ the image $\f(v)$ is contained in $\op{Int}\sigma_v$.

We construct $\f$ by iterating along $\Gamma$. The first step consists of choosing a vertex $v$ of $\Gamma$ and letting $\f(v)$ be an arbitrary interior point of $\sigma_v$. For the induction step, we traverse along an edge $e$ connecting $v_1$ to $v_2$. We assume that $\f(v_1)$ in $\op{Int}\sigma_{v_1}$ has already been constructed, and show that there exists a choice $\ell_e > 0$ of edge length such that if we define $\f(v_2) := \f(v_1) + \ell_e m_{\vec{e}}$ then $\f(v_2)$ is contained in $\op{Int} \sigma_{v_2}$. Since $\Gamma$ has genus zero, there are no loops to potentially cause inconsistencies.

For the induction step, there are several cases to consider. The most difficult is moving from a larger to a smaller cone, i.e.
\begin{equation*} \sigma_{v_1} = \sigma_e \succeq \sigma_{v_2}.\end{equation*}
By Corollary~\ref{lem: small jumping} we know that $\sigma_{v_2}$ is a facet of $\sigma_{v_1}$. Label the generators as follows
\begin{align*}
\sigma_{v_1} &= \op{Cone}(u_0,u_1,\ldots,u_c), \\
\sigma_{v_2} &= \op{Cone}(u_1,\ldots,u_c).
\end{align*}
Then $\f(v_1)$ in $\op{Int}\sigma_{v_1}$ can be written as
\begin{equation*} \f(v_1) = \mu_0 u_0 + \sum_{i=1}^c \mu_i u_i. \end{equation*}
with all $\mu_i > 0$. We orient the edge $e$ from $v_2$ to $v_1$ and write the slope $m_{\vec{e}}$ as
\begin{equation*} m_{\vec{e}} = a_0 u_0 + \sum_{i=1}^c a_i u_i.\end{equation*}
We have $a_0>0$ by Lemma~\ref{lem: geometric and formal tangencies coincide} and Remark~\ref{rmk: formal tangency equals geometric tangency}. By Corollary~\ref{lem: slope negativity} it follows that $a_i \leq 0$ for all $1 \leq i \leq c$. We define
\begin{equation*} \ell_e \colonequals \mu_0/a_0 \in \RR_{>0}.\end{equation*}
This gives
\begin{equation*} \f(v_2) = \f(v_1) - \ell_e m_{\vec{e}} = \sum_{i=1}^c (\mu_i - \mu_0 a_i/a_0) u_i .\end{equation*}
The negative sign arises from the choice of orientation for $e$. Since $\mu_i > 0$ and $\mu_0 a_i/a_0 \leq 0$ we conclude that $\f(v_2)$ lies in $\op{Int}\sigma_{v_2}$ as required.

This deals with the difficult case. Now suppose that $\sigma_{v_1} \preceq \sigma_{v_2}$. In this case we are moving from a smaller to a larger cone, and any sufficiently small $\ell_e > 0$ will suffice. Note that this includes the case $\sigma_{v_1}=\sigma_{v_2}$. The final case is where $\sigma_{v_1}$ and $\sigma_{v_2}$ are distinct facets of $\sigma_e$
\[ \sigma_{v_1} \preceq \sigma_e \succeq \sigma_{v_2}.\]
We subdivide the edge $e$ by introducing an additional bivalent vertex $v_0$, and assign $\sigma_{v_0}=\sigma_e$. We then apply the previous two cases to construct the map. The vertex $v_0$ may now be deleted, and we conclude the result.
\end{proof}

\subsection{Geometric lifting} \label{sec: reduction to tropical lift} We now use the geometry of the universal target to reduce Theorem~\ref{thm: log is naive} to the tropical lifting Theorem~\ref{thm: tropical lift} proved above. Consider the following diagram
\[
\begin{tikzcd}
\Log_\Lambda(X|D) \ar[r] \ar[d,"p"] \ar[rd,phantom,"\text{A}"] & \Nup_\Lambda(X|D) \ar[r] \ar[d,"q"] \ar[rd,phantom,"\text{B}"] & \Mup_\Lambda(X) \ar[d] \\
\mathfrak{Log}(\Acal_X|\Dcal_X) \ar[r,"\psi"] & \mathfrak{NLog}(\Acal^k|\partial \Acal^k) \ar[r]  & \Mfrak(\Acal^k). 
\end{tikzcd}
\]
We note that $\Acal_X$ is the open substack of $\Acal^k$ corresponding to the subcomplex inclusion\footnote{Strictly speaking, this is only a subcomplex inclusion if all intersections $\cap_{i \in I} D_i$ are connected. In general $\Sigma \to \RR_{\geq 0}^k$ is only a finite cover of a subcomplex, but in either case the map $\Acal_X \to \Acal^k$ is open which is all we require. Alternatively, we can force the component intersections to be connected by performing further blowups.}
\[ \Sigma \subseteq \RR_{\geq 0}^k.\]
It follows that the outer square $\text{A+B}$ is cartesian. On the other hand we saw in \eqref{eqn: diagram for naive space and universal naive space} that the square $\text{B}$ is cartesian. We conclude that $\text{A}$ is cartesian.

The morphisms $p$ and $q$ are equipped with compatible perfect obstruction theories: see Section~\ref{sec: naive space definition}. By the Costello--Herr--Wise comparison theorem~\cite{Costello,HerrWiseCostello}, Theorem~\ref{thm: log is naive} follows if we show that $\psi$ is \lu{proper and birational}~\cite[Remark~2.6]{HerrWiseCostello}.

\lu{The arguments of \cite{WiseUniqueness} apply equally when the target is a logarithmic algebraic stack. They imply that $\mathfrak{Log}(\Acal_X|\Dcal_X) \to \Mfrak(\Acal^k)$ is finite over its image, hence proper. From Remark~\ref{rmk: equivalent descriptions of universal naive} we see that the morphism $\mathfrak{NLog}(\Acal^k|\partial \Acal^k) \to \Mfrak(\Acal^k)$ is the base change of a proper morphism, hence \emph{a fortiori} separated. We conclude that $\psi$ is proper.
}

\red{To prove Theorem~\ref{thm: log is naive}, it remains to show that $\psi$ is birational.} The source of $\psi$ is logarithmically smooth, irreducible and equal to the closure of the locus where the logarithmic structure is trivial. More explicitly, this is the locus of non-degenerate maps: the source curve is smooth and its generic point maps to the interior of $\mathcal A_X$. Moreover, $\psi$ induces an isomorphism of this locus onto the same locus of non-degenerate maps in the codomain. As such, if we prove that $\psi$ is surjective then it follows that $\psi$ is birational. Therefore Theorem~\ref{thm: log is naive}, and hence Theorem~\ref{thm: main-thm}, reduces to the following.

\begin{theorem} The following morphism is surjective
	\[ \psi \colon \mathfrak{Log}(\Acal_X | \Dcal_X) \to \mathfrak{NLog}(\Acal^k|\partial \Acal^k).\]
\end{theorem}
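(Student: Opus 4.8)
The plan is to check surjectivity on geometric points, translating the tropical lifting Theorem~\ref{thm: tropical lift} into an honest logarithmic map. First I would unwind the target: a geometric point of $\mathfrak{NLog}(\Acal^k|\partial \Acal^k)$ is a prestable curve $C$ with dual graph $\Gamma$ together with logarithmic maps $\Ccal_i \to (\Acal|\Dcal)$ for $1 \leq i \leq k$ sharing the same underlying curve, each carrying its own minimal base monoid. Tropicalising these produces the families $\sqC_i \to \tau_i$ and the combinatorial type of a naive map to $\Sigma$ built in Section~\ref{sec: combinatorial type of naive map}. Since the topological spaces of both source and target are stratified by combinatorial type, and the moduli within a stratum is the torus of Section~\ref{sec: moduli of log curves}, it suffices to produce, for each such naive point, a logarithmic map $\Ccal \to (\Acal_X|\Dcal_X)$ realising the same combinatorial type and whose $k$ projections recover the given $\Ccal_i$.

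The construction proceeds as follows. I would apply Theorem~\ref{thm: tropical lift} to obtain the ray $\rho \subseteq \prod_i \tau_i \times_{\sigma_\Gamma^k} \sigma_\Gamma$, the tropical curve $\sqC \to \rho$, and the tropical map $\f \colon \sqC \to \Sigma$ whose combinatorial type is that of the naive map and whose $i$-th projection is compatible with $\sqC_i$. The interior ray $\rho$ determines, via the procedure of Section~\ref{sec: moduli of log curves}, a logarithmic enhancement $\Ccal$ of $C$ over the standard logarithmic point whose tropicalisation is $\sqC \to \rho$. The tropical map $\f$ then upgrades, by the equivalence of Section~\ref{sec: log maps to Artin fans} and \cite[Proposition~2.10]{AbramovichChenGrossSiebertDegeneration}, to a logarithmic map $\Ccal \to (\Acal_X|\Dcal_X)$. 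This is the candidate preimage.

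It remains to verify that $\psi$ sends this point to the given naive point. Post-composing $\Ccal \to (\Acal_X|\Dcal_X)$ with the $i$-th projection $(\Acal_X|\Dcal_X) \to (\Acal|\Dcal)$ tropicalises to $\sqC \to \Sigma \to \RR_{\geq 0}$, which by the final clause of Theorem~\ref{thm: tropical lift} agrees with the pullback of $\sqC_i \to \RR_{\geq 0}$ along $\rho \to \tau_i$; since $\rho$ is interior this pullback has the same combinatorial type as $\Ccal_i$, so the $i$-th projection lies in the correct stratum and, after scaling, sweeps it out. I expect the genuine difficulty to lie not here but in Theorem~\ref{thm: tropical lift} itself, whose role is to fuse the $k$ independent rank-one tropical maps into a single map to $\Sigma$ with all parameters strictly positive --- a fusion that fails before passing to a $\Lambda$-sensitive modification (see Section~\ref{sec: realisability}) and succeeds afterwards precisely because the slope-negativity of Corollary~\ref{lem: slope negativity} forces the inductive construction of $\f$ to remain in the interiors of the cones. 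Once that theorem is granted, the present statement is a translation via the two combinatorial dictionaries, and the only subtlety is the simultaneous compatibility with all $k$ projections, which the compatibility clause of Theorem~\ref{thm: tropical lift} supplies.
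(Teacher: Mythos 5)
Your proposal correctly identifies the skeleton --- reduce to geometric points, invoke Theorem~\ref{thm: tropical lift}, and convert tropical data into logarithmic data via Sections~\ref{sec: moduli of log curves} and~\ref{sec: log maps to Artin fans} --- but it has a genuine gap at the final step, and the gap is the actual content of the theorem. Surjectivity requires hitting the \emph{given} geometric point of $\mathfrak{NLog}(\Acal^k|\partial\Acal^k)$, not merely some point with the same combinatorial type. Your construction starts from an \emph{arbitrary} logarithmic enhancement $\Ccal$ of $C$ with tropicalisation $\sqC \to \rho$; its $k$ projections then have the correct tropicalisations (by the compatibility clause of Theorem~\ref{thm: tropical lift}), but nothing in your argument shows they are isomorphic, as logarithmic maps, to the given $\Ccal_i \to (\Acal|\Dcal)$. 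Equivalently: the underlying map $C \to \Acal^k$, i.e.\ the tuple of line bundle--section pairs produced by your construction, need not agree with the one prescribed by the given naive point. A stratum of fixed combinatorial type has positive-dimensional moduli (the tori of Section~\ref{sec: moduli of log curves}, one factor for each $i$), and your closing phrase ``after scaling, sweeps it out'' is precisely the unproved assertion that the torus of choices in your construction surjects onto this moduli. Scaling $\rho$ is a one-parameter operation and cannot do this; and surjectivity of the relevant map of tori is not formal.

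This is also where your proposal diverges essentially from the paper's proof, which is structured to avoid exactly this problem. The paper anchors the construction to the \emph{fixed} underlying map $f \colon C \to \Acal_X$ determined by the given point (forgetting log structures), and builds a logarithmic enhancement of $f$ itself: the tropical lift is used to produce, via semistable reduction applied to $\sqC \to \Sigma \times \rho$, an expansion $\Ecal_X \to \Acal_X$, and one constructs a strongly transverse lift $g \colon C \to \Ecal_X$ of $f$ component by component. Over each vertex $v$ the lifts of $f_v$ form a torsor under $T_{\sigma_v}$, and the crux is gluing at the nodes: one must solve $\lambda(g_2(q_e)) = g_1(q_e)$ with $\lambda \in T_{\sigma_{v_2}}$, which requires surjectivity of the composition
\[ N_{\sigma_{v_2}} \to N_{\sigma_e} \to N_{\sigma_e}/\ZZ m_{\vec{e}}, \]
and this is exactly where Corollaries~\ref{lem: small jumping} and~\ref{lem: slope negativity} enter. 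So slope-sensitivity is used a \emph{second} time, in the geometric gluing, beyond its use inside Theorem~\ref{thm: tropical lift}; your claim that once the tropical theorem is granted ``the present statement is a translation via the two combinatorial dictionaries'' is therefore not correct. To repair your argument you would need to either (a) re-anchor it to the given underlying map, as the paper does, or (b) prove that the torus of enhancements $\Ccal$ with tropicalisation $\sqC \to \rho$ maps onto the product of the tori of enhancements $(\Ccal_1,\ldots,\Ccal_k)$ --- a statement that again needs the slope-sensitivity hypothesis and is comparable in difficulty to the paper's gluing step.
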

\begin{proof} Choose a geometric point of the codomain. This consists of the following data:
\begin{enumerate}[$\bullet$]
\item $C$ a fixed scheme-theoretic source curve;
\item $\Ccal_i \to (\Speck,Q_i)$ a logarithmic enhancement of $C$, one for each $1 \leq i \leq k$;
\item $\sqC_i \to \RR_{\geq 0}$ a morphism of cone complexes for $1 \leq i \leq k$, where $\sqC_i \to \tau_i$ is the family of tropical curves obtained by tropicalising $\Ccal_i \to (\Speck,Q_i)$. Here $\tau_i=\Hom(Q_i,\RR_{\geq 0})$.
\end{enumerate}
The maps $\sqC_i \to \RR_{\geq 0}$ correspond to logarithmic maps $\Ccal_i \to (\Acal|\Dcal)$. Forgetting logarithmic structures, we obtain a map $C \to \Acal^k$ which factors through $\Acal_X$. We construct a logarithmic enhancement of this by constructing a strongly transverse map to an expansion of $\Acal_X$, following~\cite{RangExpansions}. \red{Strong transversality means that the map is disjoint from codimension-$2$ strata and only intersects codimension-$1$ strata at special points, with balanced tangency orders at nodes.}

The expansion is produced from the previously-constructed tropical lift. In Theorem~\ref{thm: tropical lift} we proved the existence of a one-dimensional cone
\[ \rho \subseteq \prod_{i=1}^k \tau_i \times_{\sigma_\Gamma^k} \sigma_\Gamma \]
and compatible tropical map $\sqC \to \Sigma$ metrised by $\rho$. Consider the graph map
\[ \sqC\to \Sigma \times \rho. \]
Toroidal semistable reduction \cite{AbramovichKaru} produces a logarithmic modification of $\Acal_X \times \Aaff^1$ forming a flat and reduced family of expansions of $\Acal_X$. The central fibre $\Ecal_X$ has irreducible components $\Ecal_X(v)$ indexed by vertices of $\Gamma$ which meet along divisors $\Ecal_X(e)$ indexed by edges of $\Gamma$. \red{There is a natural logarithmic collapsing morphism given as the composite $\Ecal_X \to \Acal_X \times \Aaff^1 \to \Acal_X$.} We will construct a strongly transverse lift $g \colon C \to \Ecal_X$ of $f \colon C \to \Acal_X$.

\red{
For each vertex $v$ of $\Gamma$ the restriction of $f \colon C \to \Acal_X$ to the curve component $C_v$ factors through
\[ f_v \colon C_v \to \Acal_X(v) \]
where $\Acal_X(v) \subseteq \Acal_X$ is the closed stratum corresponding to $\sigma_v \preceq \Sigma$. The collapsing morphism restricts to a map of closed strata $\Ecal_X(v) \to \Acal_X(v)$. Restricting further to the locally closed strata, we obtain a principal torus bundle
\[ \Ecal^{\circ}_X(v) \to \Acal^{\circ}_X(v) \]
with structure group $T_{\sigma_v} = N_{\sigma_v} \otimes \Gm$. 

Consider the irreducible components of $\Dcal_X$ containing the stratum $\Acal_X(v)$. Let $L_1,\ldots,L_k$ denote the restrictions to $\Acal_X^{\circ}(v)$ of their associated line bundles. Then $\Ecal_X^{\circ}(v)$ is precisely the principal bundle associated to $\oplus_{i=1}^k L_i$ \cite[Proposition~4.3]{CarocciNabijou2}. Given a map to $\Acal_X^{\circ}(v)$, a lift to $\Ecal_X^{\circ}(v)$ is given by specifying nonvanishing sections of the pullbacks of $L_1,\ldots,L_k$.
}

\red{
Denote by $x_1,\ldots,x_r$ the special points in $C_v$. For each $L_i$ the combinatorial type of naive map determines contact orders $c_{ij} \in \Z$ for $1 \leq j \leq r$. By the balancing condition these data satisfy
\[ \Sigma_{j=1}^r c_{ij} = \deg f_v^\star L_i . \]
Since we work in genus zero, $C_v$ is a marked $\PP^1$ and so we may choose an isomorphism
\begin{equation} \label{eqn: isomorphism between Li and AJ line bundle} \OO_{C_v}(\Sigma_{j=1}^r c_{ij} x_j) \cong f_v^\star L_i.\end{equation}
The standard rational section on the left hand side produces a rational section of $f_v^\star L_i$. This section is both regular and nonvanishing away from the special points and hence we obtain a map
\[ C_v \setminus \{ x_1,\ldots,x_r \} \to \Ecal_X^{\circ}(v).\]
Passing to the partial compactification $\Ecal_X(v)$ of the codomain, the above map extends to all of $C_v$ and produces a strongly transverse lift}
\[
\begin{tikzcd}
C_v \arrow[dr, bend right,"f_v" left] \arrow[r,dashed,"g_v"] & \Ecal_X(v)\arrow{d}\\
& \Acal_X(v)
\end{tikzcd}
\]
with the desired tangency orders. The set of such lifts forms a torsor under the group $T_{\sigma_v}$ with moduli the choice of isomorphisms \eqref{eqn: isomorphism between Li and AJ line bundle}. See~\cite[Proposition~3.3.3]{RanganathanSkeletons1} for a parallel calculation.

We now argue that the scalar moduli above can be chosen in such a way that the lifts $g_v \colon C_v \to \Ecal_X(v)$ glue to form a globally defined map $g \colon C \to \Ecal_X$. The following argument uses slope-sensitivity in an essential way.

Choose a root vertex of $\Gamma$ and orient the graph to flow away from the root. We will construct $g$ by iterating along this flow. At each step, we traverse from $v_1$ to $v_2$ along an oriented edge $\vec{e}$ of the graph, and assume that we have already constructed a lift
\[
g_1: C_{v_1}\to \Ecal_X(v_1).
\]
Fix an arbitrary lift $g_2: C_{v_2}\to \Ecal_X(v_2)$ with the prescribed contact orders. We claim that there exists a scalar $\lambda$ in $T_{\sigma_{v_2}}$ such that
\begin{equation} \label{eqn: gluing at nodes} \lambda ( g_2(q_e) ) = g_1(q_e) \end{equation}
where $q_e \in C$ is the node connecting $C_{v_1}$ and $C_{v_2}$. Note that $g_2(q_e)$ is an interior point of the join divisor $\Ecal_X(e)$. This interior is a principal torus bundle over the interior of $\Acal_X(\sigma_e)$ with structure group $(N_{\sigma_e} / \ZZ m_{\vec{e}} ) \otimes \Gm$ where $m_{\vec{e}} \in N_{\sigma_e}$ is the slope along the oriented edge (see e.g. \cite[Theorem~1.8]{CarocciNabijou1}). To prove the claim it therefore suffices to show that the composition
\begin{equation} \label{eqn: composition of lattices} N_{\sigma_{v_2}} \to N_{\sigma_e} \to N_{\sigma_e} / \mathbb{Z} m_{\vec{e}} \end{equation}
is surjective. By Corollary~\ref{lem: small jumping} there are two cases to consider: $\dim \sigma_{v_2}=\dim \sigma_e$ and $\dim {\sigma_{v_2}}=\dim \sigma_e-1$. In the first case, \eqref{eqn: composition of lattices} is the composition of an isomorphism followed by a surjection and so the claim holds. In the second case, we have a hyperplane inclusion
\[
N_{\sigma_{v_2}}\hookrightarrow N_{\sigma_e}.
\]
Corollary~\ref{lem: slope negativity} implies that $m_{\vec{e}}$ does not belong to the image, and hence the composition \eqref{eqn: composition of lattices} is surjective as claimed. We conclude the existence of a scalar $\lambda$ satisfying \eqref{eqn: gluing at nodes}. Replacing $g_2$ by $\lambda \circ g_2$ we obtain a strongly transverse lift of $f_2$ which glues to $g_1$ along $q_e$. 

By iterating, we construct the desired global lift $g \colon C \to \Ecal_X$. Since this lift is strongly transverse, standard arguments (see e.g.~\cite[Section~6]{RangExpansions} or \cite[Section~4.2]{AbramovichChenGrossSiebertDegeneration}) produce a logarithmic enhancement $\Ccal \to \Ecal_X$. Composing with the collapsing map produces a logarithmic morphism $\Ccal \to (\Acal_X|\Dcal_X)$ and this completes the proof.
\end{proof}

\section{Logarithmic from absolute} 

\noindent The main result of this paper allows for the importation of orbifold techniques into logarithmic Gromov--Witten theory. As a first application, we establish the following conceptual result.

\begin{theorem}\label{thm: log from absolute} The genus zero logarithmic Gromov--Witten theory of $(X|D)$ is determined algorithmically from the genus zero absolute Gromov--Witten theories of all the strata of $(X|D)$.
\end{theorem}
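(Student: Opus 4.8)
The plan is to deduce the result from Theorem~\ref{thm: main-thm}, which converts the logarithmic problem into an orbifold one, and then to dismantle the orbifold theory using the existing reconstruction formulas for Gromov--Witten theory of root stacks, gerbes, projective bundles, and blowups. First I would fix genus zero numerical data $\Lambda$ and choose a $\Lambda$-sensitive blowup $\pi \colon (X^\dag|D^\dag) \to (X|D)$, with the lifted data also denoted $\Lambda$. By Theorem~\ref{thm: main-thm}, the logarithmic invariants of $(X|D)$ with data $\Lambda$ equal the orbifold invariants of the multi-root stack $\Xcal^\dag = X^\dag_{D^\dag,\vec{r}}$ carrying the same data, for $\vec{r}$ sufficiently large. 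Since $\Lambda$ ranges over all genus zero numerical data, it suffices to produce an algorithm computing the orbifold invariants of $\Xcal^\dag$ from the absolute Gromov--Witten theories of the strata of $(X|D)$.

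The reduction proceeds in two stages. In the first stage I would strip away the stacky structure of $\Xcal^\dag$. The multi-root stack is obtained from $X^\dag$ by iterated root constructions along its smooth boundary components, so applying the root-stack reconstruction results of Tseng--You and Chen--Du--Wang~\cite{TsengYouCodimOne,ChenDuWang} one component at a time expresses the genus zero orbifold invariants of $\Xcal^\dag$ in terms of the absolute invariants of $X^\dag$ together with the invariants of the twisted sectors. These sectors are supported on the boundary strata $D_I^\dag$, and along deeper intersections they acquire a gerbe structure; the gerbe formula of Andreini--Jiang--Tseng~\cite{AJTGerbes1} reduces the contribution of each such sector to the absolute invariants of the underlying stratum. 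The output of this stage is an algorithm computing the orbifold invariants of $\Xcal^\dag$ from the absolute invariants of $X^\dag$ and of all the strata $D_I^\dag$ of $(X^\dag|D^\dag)$.

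In the second stage I would relate the strata of $(X^\dag|D^\dag)$ back to those of $(X|D)$. Because $\pi$ is an iterated blowup in smooth logarithmic strata, each stratum of $(X^\dag|D^\dag)$ is either an iterated blowup of a stratum of $(X|D)$ in smooth centres that are themselves strict transforms of strata, or a projective bundle over such a stratum arising from an exceptional divisor. Fan's blowup formula~\cite{FanBlowups} expresses the absolute invariants of $\mathrm{Bl}_Z Y$ in terms of those of $Y$ and of the centre $Z$, while the projective bundle formula reduces the invariants of the exceptional bundles to those of their bases. Performing these reductions in the reverse order of the blowups, and using that the centres and bundle bases are always strata of strictly smaller dimension, the recursion terminates and expresses the absolute invariants of $X^\dag$ and of every $D_I^\dag$ in terms of the absolute invariants of the strata of $(X|D)$, with $X$ itself occurring as the codimension-zero stratum. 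Composing the two stages yields the desired algorithm.

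The main obstacle I anticipate is ensuring that the two stages interlock coherently and that the composite procedure is genuinely effective. Concretely, the twisted sectors produced by the root-stack and gerbe formulas must be matched with the strata handled by the blowup and projective-bundle formulas, and one must verify that the cited results apply in the precise generality needed here: genus zero, simple normal crossings boundary, and the particular twisted sectors of a multi-root stack. I expect the genuine content to lie in this bookkeeping and in checking that each cited formula is itself effective, so that the double recursion on the number of blowups and on the codimension of strata provides a terminating algorithm. This statement is the logarithmic generalisation of the relative reconstruction theorem of Maulik--Pandharipande~\cite[Theorem~2]{MaulikPandharipande}, which serves as the blueprint for the smooth case.
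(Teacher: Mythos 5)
Your proposal follows essentially the same route as the paper's own proof: Theorem~\ref{thm: main-thm} converts the logarithmic theory of $(X|D)$ into the orbifold theory of a $\Lambda$-sensitive blowup $(X^\dag|D^\dag)$, iterated application of Chen--Du--Wang reduces that orbifold theory to the absolute theories of the strata of $(X^\dag|D^\dag)$, and Fan's blowup formula together with a projective-bundle reduction brings these back to the strata of $(X|D)$. The one point you elide is that no general projective bundle formula exists in Gromov--Witten theory; the paper justifies this step by noting that the logarithmic structure splits the normal bundle of the blowup centre, so each exceptional bundle is a projectivised sum of line bundles carrying a fibrewise torus action, and localisation then reduces its theory to that of the base stratum.
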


\begin{proof} Let $(X|D)$ be a normal crossings pair and fix numerical data $\Lambda$ for a specific logarithmic Gromov--Witten invariant. Choose a $\Lambda$-sensitive modification $(X^\dag|D^\dag) \to (X|D)$. We may additionally assume that $D^\dag \subseteq X^\dag$ is \emph{simple} normal crossings. 

Theorem~\ref{thm: main-thm} equates the logarithmic Gromov--Witten theory of $(X|D)$ to the orbifold Gromov--Witten theory of $(X^\dag|D^\dag)$. The multi-root stack associated to $(X^\dag|D^\dag)$ may be factored as a sequence of root stacks with respect to smooth divisors. By iteratively applying \cite[Theorem~1.9]{ChenDuWang}, we conclude that the orbifold Gromov--Witten theory of $(X^\dag|D^\dag)$ is determined by the absolute Gromov--Witten theories of all the strata of $(X^\dag|D^\dag)$.

It remains to reduce these to the absolute Gromov--Witten theories of the strata of $(X|D)$. By induction it is sufficient to consider the case where $X^\dag \to X$ is a single blowup \red{along} a smooth stratum $Z \subseteq X$. Every stratum $V^\dag \subseteq X^\dag$ is then either a blowup of, or a projective bundle over, a stratum $V \subseteq X$. In the former case, work of Fan \cite[Theorem~B]{FanBlowups} reduces the theory of $V^\dag$ to the theories of $V$ and $V \cap Z$, both of which are strata in $(X|D)$. 

In the latter case, we observe that the projective bundle $V^\dag \to V$ is the projectivisation of a direct sum of line bundles. This is because the logarithmic structure provides a natural splitting of the normal bundle of the blowup centre $Z$. Therefore there is a well-defined big torus acting in the fibres of $V^\dag \to V$. Localising with respect to this action reduces the theory of $V^\dag$ to the theory of $V$ (analogously to the proof of Theorem~\ref{thm: orbifold theory from absolute theory} below). Similar arguments have been employed by others: see e.g. \cite{MaulikPandharipande,Brown}. \end{proof} 

If we restrict to insertions coming from $X$, then a slightly weaker form of the statement can be proved in a direct manner. While weaker, we believe that it transparently illustrates a general theme in the present paper, namely that once the problem has been moved to the orbifold side, more techniques become available to manipulate the theory. We include this weaker statement to give a sense for this. See also~\cite{TsengYouMirror} where similar ideas appear. 

As usual fix $(X|D=D_1+\ldots+D_k)$ a simple normal crossings pair, $\vec{r}=(r_1,\ldots,r_k)$ a tuple of pairwise coprime positive integers and $\Xcal = X_{D,\vec{r}}$ the associated multi-root stack. For $Y=\Xcal$ or $Y$ a stratum of $(X|D)$, there is a natural morphism $Y \to X$. The \textbf{restricted Gromov--Witten theory} of $Y$ is the system of descendant invariants with evaluation classes pulled back from the cohomology of $X$.

\begin{theorem} \label{thm: orbifold theory from absolute theory} The genus zero restricted Gromov--Witten theory of $\Xcal$ is determined algorithmically from the genus zero restricted absolute Gromov--Witten theories of all the strata of $(X|D)$.
\end{theorem}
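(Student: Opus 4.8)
The plan is to bypass the logarithmic--naive comparison and reduce the restricted orbifold invariants of $\Xcal$ directly to absolute invariants of strata, using degeneration and localisation. First I would use the pairwise coprimality of $\vec r$ to factor the multi-root stack as an iterated sequence of root constructions along smooth divisors,
\[ \Xcal = \left( \cdots \left( X_{D_1,r_1}\right)_{D_2,r_2} \cdots \right)_{D_k,r_k}, \]
coprimality guaranteeing that the isotropy along each deeper stratum $D_I$ is the cyclic group $\mu_{\prod_{i \in I} r_i}$, so the construction is independent of the order and its closed strata are precisely the root stacks of the strata of $(X|D)$. Arguing by induction on $k$, it then suffices to peel off the final root stack $Y_{E,r} \to Y$ along a smooth divisor $E \subseteq Y$, where $Y$ carries the roots of the remaining $k-1$ components, and to reduce the restricted genus zero theory of $Y_{E,r}$ to that of $Y$ and $E$; both of the latter carry at most $k-1$ roots and are therefore handled by the inductive hypothesis.

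For this single step I would degenerate $Y$ to the normal cone of $E$, replacing $Y$ by $Y \cup_{E_0} P$ with $P = \PP(N_{E/Y} \oplus \OO)$, the root structure now supported along the infinity section $E_\infty \subseteq P$. Applying the degeneration formula for orbifold Gromov--Witten invariants, and restricting every insertion to a pullback from $Y$, expresses the genus zero restricted invariants of $Y_{E,r}$ as a sum of products of relative invariants of the pair $(Y,E)$, which carries no root structure along $E$, with relative invariants of the rooted bundle $(P_{E_\infty,r}, E_0)$. The relative invariants of $(Y,E)$ are reduced to the absolute invariants of $Y$ and of $E$ by the genus zero argument of Maulik--Pandharipande \cite{MaulikPandharipande}, which proceeds by degeneration and fibrewise localisation and adapts to the present orbifold setting.

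The essential point, and the place where passing to the orbifold side pays off, concerns the rooted $\PP^1$-bundle. Because $P \to E$ is the projectivisation of the split bundle $N_{E/Y} \oplus \OO$, it carries a fibrewise $\Gm$-action scaling the normal direction, and this action lifts to $P_{E_\infty,r}$. Virtual localisation with respect to it has fixed loci supported over the zero and infinity sections, each a copy of $E$, with the infinity section carrying a $B\mu_r$-gerbe that contributes fractional weights to the virtual normal bundle. In genus zero the resulting rubber integrals close up and the invariants of $P_{E_\infty,r}$ are expressed purely in terms of genus zero absolute invariants of $E$. Combining these reductions with the inductive hypothesis applied to $Y$ and $E$ yields the desired algorithm.

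The main obstacle is the localisation analysis on $P_{E_\infty,r}$: identifying the orbifold fixed loci, computing the fractional weights arising from the gerbe at infinity, and verifying that the genus zero rubber contributions assemble into a purely absolute reconstruction with no residual relative data. The remaining ingredients --- the clean factorisation afforded by coprimality, the compatibility of the orbifold degeneration formula with insertions pulled back from $X$, and the diagonal splitting of such insertions across the strata --- are routine.
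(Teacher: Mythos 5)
Your route is genuinely different from the paper's: you peel off one root at a time, degenerate to the normal cone of the divisor, invoke the orbifold degeneration formula, reduce relative invariants to absolute ones \`a la Maulik--Pandharipande, and localise on the rooted $\PP^1$-bundle. The paper's proof of Theorem~\ref{thm: orbifold theory from absolute theory} never uses a degeneration formula at all: following Tseng--You, it realises $\Xcal$ as a complete intersection inside a $\PP(1,r_1)\times\cdots\times\PP(1,r_k)$-bundle $\Pcal$ over $X$, applies orbifold quantum Lefschetz to trade the restricted theory of $\Xcal$ for the theory of $\Pcal$ capped with Euler classes, localises with respect to the fibrewise torus (whose fixed loci are root gerbes over $X$ itself), and then uses the virtual pushforward theorem for root gerbes to express every contribution as an integral over $\Mup_\Lambda(X)$.

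There is a genuine gap in your plan, and it is precisely the point that the word ``restricted'' in the statement is calibrated to. The degeneration formula does not preserve restrictedness: when $Y_{E,r}$ degenerates to $Y \cup_E P_{E_\infty,r}$, the formula splits the diagonal class of $E$ at the matched relative markings, so the relative invariants of $(Y,E)$ and $(P_{E_\infty,r},E_0)$ that appear carry insertions running over a full basis of $H^\star(E)$ (indeed of its inertia, since $E$ retains residual root structure), not merely classes pulled back from $X$. The Maulik--Pandharipande reconstruction then expresses these in terms of absolute invariants of $Y$ and $E$ with \emph{arbitrary} insertions, so closing your induction requires the full Gromov--Witten theories of all strata as input, whereas the theorem only supplies their restricted theories; for a typical pair (say a quartic surface in $\PP^3$) most of the cohomology of a stratum is not ambient, so this loss is real. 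In other words, even granting the nontrivial adaptation of Maulik--Pandharipande to an orbifold ambient space $Y$ --- which is essentially the content of the Chen--Du--Wang result cited in the proof of Theorem~\ref{thm: log from absolute}, not a routine transfer --- your argument proves that the full theory of $\Xcal$ is determined by the full theories of the strata, which is the shape of Theorem~\ref{thm: log from absolute}, but not the restricted-from-restricted statement at hand. The paper's quantum Lefschetz plus fibrewise localisation argument avoids the issue because no degeneration is ever performed: all fixed loci fibre over $X$, all integrals are pushed forward to $\Mup_\Lambda(X)$, and no diagonal class of any stratum ever enters.
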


\begin{proof} We recall the projective bundle construction of \cite[Section~3.1]{TsengYouMirror}. For $1 \leq i \leq k$ let $P_i = \PP_X(\OO_X(D_i) \oplus \OO_X)$. This contains zero and infinity sections $X_{i0},X_{i\infty}$ satisfying
\begin{align*} \OO_{P_i}(1)|_{X_{i0}} \cong \OO_{X}, \quad \OO_{P_i}(1)|_{X_{i\infty}} \cong \OO_X(D_i).\end{align*}
There is a tautological section $s_i \in H^0(P_i,\OO_{P_i}(1))$ and a natural isomorphism of pairs
\[ (V(s_i)|V(s_i) \cap X_{i\infty}) = (X|D_i).\]
From the $P_i$ we construct the following $(\PP^1)^k$-bundle over $X$
\[ P \colonequals P_1 \times_X P_2 \times_X \cdots \times_X P_k.\]
We let $p_i \colon P \to P_i$ be the projection and consider the multi-root stack
\[ \Pcal \colonequals \sqrt[\uproot{4} r_1]{(P,p_1^{-1}X_{1\infty})} \times_P \cdots \times_P \sqrt[\uproot{4} r_k]{(P,p_k^{-1}X_{k\infty})}.\]
This is a $\PP(1,r_1)\times\cdots\times\PP(1,r_k)$-bundle over $X$. Let $p \colon \Pcal \to P$ be the map to the coarse moduli space. Then the pullbacks of the tautological sections
\[ p^\star p_i^\star s_i \in H^0(\Pcal,p^\star p_i^\star \OO_{P_i}(1)) \]
satisfy
\[ V(p^\star p_1^\star s_1, \ldots, p^\star p_k^\star s_k) \cong \Xcal.\]
Since each bundle $p^\star p_i^\star \OO_{P_i}(1)$ is positive and pulled back from the coarse moduli space, we may apply the quantum Lefschetz theorem for orbifolds \cite{CoatesEtcOrbifoldLefschetz}. This reduces the restricted genus zero theory of $\Xcal$ to the genus zero theory of $\Pcal$ capped with the class
\begin{equation} \label{eqn: Euler class term} \prod_{i=1}^k \e(\Rder \pi_\star f^\star p^\star p_i^\star \OO_{P_i}(1)).\end{equation}
We compute this by localising with respect to the big torus acting on the fibres of $\Pcal \to X$. Each fixed locus $\Zcal_I \subseteq \Pcal$ in the target is indexed by a subset $I \subseteq \{1,\ldots,k\}$ and is isomorphic to a gerbe over $X$ banded by $\Pi_{i \in I} \mu_{r_i}$
\begin{equation*} \label{eqn: gerbe over X} \Zcal_I =  \prod_{i \in I} \sqrt[\uproot{4} r_i]{(X,\OO_X(D_i))} \times_{X^I} X.\end{equation*}
Now consider a fixed locus in the moduli space $\Orb_\Lambda(\Pcal)$. Up to a finite cover, this is a fibre product of spaces of maps to the gerbes $\Zcal_I$. The fibring is with respect to the evaluation maps to the coarse moduli space $X$.

Since $\OO_{P_i}(1)|_{X_{i0}} \cong \OO_X$ it follows that for $i \not\in I$ the class $\e(\Rder \pi_\star f^\star p^\star p_i^\star \OO_{P_i}(1))$ is pure weight. On the other hand for $i \in I$ we have
\[ p^\star p_i^\star \OO_{Y_i}(1) \cong s^\star \OO_X(D_i) \]
where $s \colon \Zcal_I \to X$ is the rigidification. Let $u \colon \Orb_\Lambda(\Zcal_I) \to \Mup_\Lambda(X)$ denote the induced morphism on spaces of stable maps. By the standard comparison of universal curves (see e.g. Section~\ref{sec: orbifold is naive}) we obtain
\[ \e(\Rder \pi_\star f^\star p^\star p_i^\star \OO_{P_i}(1)) = u^\star \e(\Rder \pi_\star f^\star \OO_X(D_i)). \]
Finally, the normal bundle term expands as a power series in equivariant weights and (coarse) psi classes \cite{MLiuLocalisation}. For a discussion of the comparison between coarse and gerby psi classes see \cite[Section~8.3]{AGVDMStacks}.

In summary, the contribution of the fixed locus is a (product of) integrals over $\Orb_\Lambda(\Zcal_I)$ with insertions pulled back along $u$. By the virtual push-forward theorem for root gerbes \cite[Theorem~4.3]{AJTGerbes1} this can be expressed as an integral over $\Mup_\Lambda(X)$. The integrand is a product of equivariant weights, evaluation classes, psi classes, and an Euler class term. But the Euler class term is simply
\[ \prod_{i \in I} \e(\Rder \pi_\star f^\star \OO_X(D_i)) = \iota_\star [\Mup_\Lambda( \cap_{i \in I} D_i)]^{\virt} \]
by the absolute quantum Lefschetz theorem. We have therefore successfully reduced the restricted theory of $\Xcal$ to the restricted theories of the strata $\cap_{i \in I}D_i$ of $(X|D)$.\end{proof}

\footnotesize
\bibliographystyle{alpha}
\bibliography{Bibliography.bib}
\,

\noindent Luca Battistella. Goethe Universit\"at Frankfurt. \href{mailto:battistella@math.uni-frankfurt.de}{battistella@math.uni-frankfurt.de}\smallskip

\noindent Navid Nabijou. Queen Mary University of London. \href{mailto:n.nabijou@qmul.ac.uk}{n.nabijou@qmul.ac.uk}\smallskip

\noindent Dhruv Ranganathan. University of Cambridge. \href{mailto:dr508@cam.ac.uk}{dr508@cam.ac.uk}

\end{document}